\documentclass[a4paper,10pt]{amsart}
\usepackage[a4paper,tmargin=40mm,bmargin=35mm,hmargin=30mm]{geometry}
\usepackage[utf8]{inputenc}
\usepackage[T1]{fontenc}

\usepackage{amsmath}
\usepackage{amssymb}
\usepackage{amsthm}
\usepackage{mathtools}

\usepackage{eucal} 
\usepackage{mathdots}
\usepackage[colorlinks=true,linkcolor=red,citecolor=blue,urlcolor=green]{hyperref}
\usepackage[capitalise]{cleveref}
\usepackage{enumitem}
\usepackage{mathrsfs}  

\usepackage{tikz-cd}
\usepackage{adjustbox}
\usepackage{contour}
\usepackage{ulem}

\contourlength{0.8pt}

\newcommand{\ULL}[1]{%
  \uline{\phantom{#1}}%
  \llap{\contour{white}{#1}}%
}

\setcounter{tocdepth}{1}

\newtheorem{theorem}{Theorem}[section]
\newtheorem{prop}[theorem]{Proposition}
\newtheorem{cor}[theorem]{Corollary}
\newtheorem{lemma}[theorem]{Lemma}

\theoremstyle{definition}
\newtheorem{dfn}[theorem]{Definition}
\newtheorem{rmk}[theorem]{Remark}
\newtheorem{ex}[theorem]{Example}

\DeclareMathOperator{\Hom}{\mathsf{Hom}}
\DeclareMathOperator{\End}{\mathsf{End}}
\DeclareMathOperator{\dgEnd}{\mathsf{dgEnd}}

\DeclareMathOperator{\RHom}{\mathbf{R}\mathsf{Hom}}

\newcommand{\Acal}{\mathcal{A}}

\newcommand{\Ccal}{\mathcal{C}}
\newcommand{\Dcal}{\mathcal{D}}
\newcommand{\Ecal}{\mathcal{E}}
\newcommand{\Fcal}{\mathcal{F}}
\newcommand{\Gcal}{\mathcal{G}}
\newcommand{\Hcal}{\mathcal{H}}

\newcommand{\Rcal}{\mathcal{R}}

\newcommand{\Tcal}{\mathcal{T}}
\newcommand{\Ucal}{\mathcal{U}}
\newcommand{\Vcal}{\mathcal{V}}

\newcommand{\Qbb}{\mathbb{Q}}

\newcommand{\Tbb}{\mathbb{T}}
\newcommand{\Zbb}{\mathbb{Z}}

\newcommand{\Sfr}{\mathfrak{S}}
\newcommand{\Ifr}{\mathfrak{I}}
\newcommand{\Rfr}{\mathfrak{R}}
\newcommand{\Mfr}{\mathfrak{M}}
\newcommand{\Pfr}{\mathfrak{P}}

\newcommand{\D}{\mathsf{D}}

\newcommand{\cpt}{\mathsf{c}}
\newcommand{\bdd}{\mathsf{b}}
\newcommand{\op}{\mathsf{op}}

\newcommand{\K}{\mathsf{K}}

\newcommand{\Mod}[1]{\mathsf{Mod}\mbox{-}#1}
\newcommand{\lMod}[1]{#1\mbox{-}\mathsf{Mod}}
\newcommand{\dgMod}[1]{\mathsf{dgMod}\mbox{-}#1}
\newcommand{\dglMod}[1]{#1\mbox{-}\mathsf{dgMod}}
\newcommand{\Proj}{\mathsf{proj}}
\newcommand{\Modpr}[1]{\mathsf{Mod}_{\Proj}\mbox{-}#1}
\newcommand{\Ctra}[1]{\mathsf{Ctra}\mbox{-}#1}
\newcommand{\Ctrapr}[1]{\mathsf{Ctra}_{\Proj}\mbox{-}#1}
\newcommand{\Disc}[1]{#1\mbox{-}\mathsf{Discr}}

\renewcommand{\mod}[1]{\mathsf{mod}\mbox{-}#1}
\newcommand{\real}{\mathsf{real}}

\newcommand{\Spec}[1]{\mathsf{Spec}(#1)}

\newcommand*{\Perp}[1]{{}^{\perp_{#1}}}

\newcommand{\Sets}{\mathsf{Sets}}
\newcommand{\Prod}{\mathsf{Prod}}
\newcommand{\Def}{\mathsf{Def}}
\newcommand{\thick}{\mathsf{thick}}
\newcommand{\Cone}{\mathsf{Cone}}

\newcommand{\Coker}{\mathsf{Coker}}
\newcommand{\Ann}{\mathsf{Ann}}
\newcommand{\Add}{\mathsf{Add}}

\newcommand{\trace}{\mathsf{tr}}

\newcommand{\pp}{\mathfrak{p}}

\newcommand{\mm}{\mathfrak{m}}

\newcommand{\height}{\mathsf{ht}}
\newcommand{\yo}{\mathbf{y}}
\newcommand{\toeq}{\xrightarrow{\cong}}
\newcommand{\ctrtensor}{\odot}
\newcommand{\RGamma}{\mathbf{R}\Gamma}
\newcommand{{\tst}}{\textit{t}-}

\newcommand{\newterm}[1]{\ULL{\text{#1}}}

\title{Topological endomorphism rings of tilting complexes}
\author{Michal Hrbek}
\address[M. Hrbek]{Institute of Mathematics of the Czech Academy of Sciences, \v{Z}itn\'{a} 25, 115 67 Prague, Czech Republic}
\email{hrbek@math.cas.cz}

\subjclass[2020]{Primary: 13D09, 14F08; Secondary: 16D90, 16S50.}

\thanks{The author was supported by the GAČR project 20-13778S and RVO: 67985840.}
\begin{document}
\begin{abstract}
    In a compactly generated triangulated category, we introduce a class of tilting objects satisfying certain purity condition. We call these the decent tilting objects and show that the tilting heart induced by any such object is equivalent to a category of contramodules over the endomorphism ring of the tilting object endowed with a natural linear topology. This extends the recent result for n-tilting modules by Positselski and Šťovíček. In the setting of the derived category of modules over a ring, we show that the decent tilting complexes are precisely the silting complexes such that their character dual is cotilting. The hearts of cotilting complexes of cofinite type turn out to be equivalent to the category of discrete modules with respect to the same topological ring. Finally, we provide a kind of Morita theory in this setting: Decent tilting complexes correspond to pairs consisting of a tilting and a cotilting derived equivalence as described above tied together by a tensor compatibility condition.
\end{abstract}
\maketitle
\tableofcontents
\section{Introduction}
In the landmark result \cite{Ri89}, Rickard established a full Morita theory for derived categories of modules: There is a triangle equivalence $\D^\bdd(\Mod R) \cong \D^\bdd(\Mod S)$ between the bounded derived categories of right modules if and only if there is a compact tilting complex $T$ in $\D^\bdd(\Mod R)$ with $S \cong \End_{\D(\Mod R)}(T)$. In addition, the triangle equivalence can be represented by the derived functor $\RHom_R(T,-)$; this fact was later explained in terms of the endomorphism dg-ring of $T$ by Keller \cite{Kel93}. The whole picture can be made left-right symmetric following the observation made in \cite{Ri91}: $T$ also induces a triangle equivalence $T \otimes_R^\mathbf{L} -: \D^\bdd(\lMod R) \cong \D^\bdd(\lMod S)$ on the side of left modules, and the two equivalences are compatible with the tensor products in the sense that the following diagram commutes:
\begin{equation}\label{I1}
\begin{tikzcd}
    \D^\bdd(\Mod R) \times \D^\bdd(\lMod R) \arrow{d}[swap]{(\RHom_R(T,-), T \otimes_R^\mathbf{L} -)}{\cong} \arrow{r}{- \otimes_R^\mathbf{L} -} & \D(\Mod \Zbb) \arrow{d}{=} \\
    \D^\bdd(\Mod S) \times \D^\bdd(\lMod S) \arrow{r}{- \otimes_{S}^\mathbf{L} -} & \D(\Mod \Zbb)
\end{tikzcd}
\end{equation}

More recently, efforts were made to see to which extent the theory can be stated for tilting objects which are not necessarily compact. For a survey of both the past and the very recent development of the theory of these ``large'' silting and tilting objects we refer to \cite{AH19}. When the tilting complex $T$ is not compact, it cannot be expected to induce a derived equivalence between its endomorphism ring and $R$. However, Bazzoni showed in \cite{Bazz10} that if $T$ is a 1-tilting module which has the additional property of being ``good'', then the derived category of $R$ embeds via $\RHom_R(T,-)$ to the derived category of $\End_R(T)$, and in fact, this fully faithful functor is a part of a recollement of triangulated categories. The assumption of being good is very mild in the sense that every tilting module is additively equivalent to a good one. This was later extended to general ($n$-)tilting modules by Bazzoni, Mantese, and Tonolo \cite{BMT11}, and even further to a general setting of dg categories by Nicolás and Saorín \cite{NS18}.

A very recent breakthrough came in the paper \cite{PS21}. There, the endomorphism ring of a tilting module $T$ is endowed with a linear topology such that the resulting topological ring $\Sfr = \End_{R}(T)$ is complete and separated. Such a structure comes associated with an abelian category of right $\Sfr$-contramodules. This category can be morally viewed as the well-behaved replacement of the ill-behaved category of complete and separated $\Sfr$-modules. The theory of contramodules over complete and separated topological rings, developed in the last decade chiefly by Positselski, has quickly found many strong applications in algebra and algebraic geometry, see e.g. \cite{Pos12} and the survey \cite{contramodules}. Positselski and Šťovíček showed in \cite{PS21} that the heart of the tilting {{\tst}}structure induced by $T$ is equivalent to the category $\Ctra \Sfr$ of right $\Sfr$-contramodules. Assuming again the mild additional condition of $T$ being good, the forgetful functor $\Ctra \Sfr \to \Mod \Sfr$ is fully faithful, both on the abelian and the derived level. In particular, the essential image of the fully faithful functor $\RHom_R(T,-)$ in the setting of \cite{BMT11} is now given an algebraic description --- it is the derived category of contramodules over the endomorphism ring.

The aim of the present paper can be described as an attempt to find a common generalization of the results of Rickard and of Positselski-Šťovíček. The first goal is to extend the latter result from tilting modules to tilting complexes, or more generally, to silting objects in the sense of Psaroudakis and Vitória \cite{PV18} and Nicolás, Saorín, and Zvonareva \cite{NSZ19}. It turns out that this cannot be achieved without additional assumptions --- there are tilting complexes whose tilting heart cannot be equivalent to a category of contramodules over any complete and separated topological ring (\cref{counterexample}). Therefore, we are forced to find a condition on a tilting complex which guarantees decent behavior. The condition we consider comes from the purity theory of compactly generated triangulated categories as established by Krause \cite{Kr00}, and rely on the new techniques using Grothendieck derivators developed recently by Laking \cite{Lak20}. In Section 2, we introduce the notion of a decent tilting object as a silting object such that its definable closure is contained in the silting heart. Such objects are automatically tilting in the sense of \cite{PV18}, and therefore are expected to induce derived equivalences. In the generality of a compactly generated triangulated category $\Tcal$, we show that the heart of the {{\tst}}structure induced by a decent tilting object is equivalent to a contramodule category of the endomorphism ring $\Sfr$ endowed with a suitable triangulated replacement of the finite topology we call the compact topology (\cref{heartequiv}, \cref{ab-eq}). The way we prove this is by using the restricted Yoneda functor to reduce the problem from the triangulated setting to the abelian setting of modules over a ringoid, where the results of \cite{PS21} apply directly. Under mild assumptions on $\Tcal$, the existence of a decent tilting complex in $\Tcal$ then yields a triangle equivalence $\Tcal^\bdd \cong \D^\bdd(\Ctra \Sfr)$, where $\Tcal^\bdd$ is the subcategory of suitably bounded objects of $\Tcal$ (\cref{gen-der-eq}). 

In Section 3, we specialize to the setting of the derived category of modules over a ring. There, it turns out that our condition has a very natural interpretation: A silting complex $T$ of right $R$-modules is decent if and only if its character dual is a cotilting complex of left $R$-modules (\cref{condition}). It follows that our notion of a decent tilting complex includes all tilting modules and all compact tilting complexes, and the decent tilting complexes correspond bijectively to cotilting complexes of cofinite type via the character duality. Moreover, we discuss some recently studied sources of interesting examples coming from commutative algebra \cite{PV20}, \cite{HNS}.

Motivated by the aforementioned results of \cite{BMT11}, we show in Section 4 that, up to additive equivalence, $T$ can be assumed to be ``good'' --- a technical condition which allows to represent the derived equivalence $\D^\bdd(\Mod R) \toeq \D^\bdd(\Ctra \Sfr)$  by the derived functor $\RHom_R(T,-)$. In Section 5, we suitable dualize this to show that the cotilting heart associated to the character dual of a decent tilting complex is equivalent to another category attached to the topological ring --- the category $\Disc \Sfr$ of left discrete $\Sfr$-modules (\cref{cotilting-heart}). This gives an explicit description of the hearts induced by cotilting complexes of cofinite type (\cref{cotilting-cofinite}). Analogously to Rickard's result, the cotilting derived equivalence can be represented by $T \otimes_R^\mathbf{L} -$ if $T$ is good. 

In Section 6, we obtain in \cref{thm-tensor} a commutative square for a good and decent tilting complex similar to \cref{I1}, which shows that the representable derived equivalences are compatible with the tensor and contratensor structures:
\begin{equation}\label{I2}
    \begin{tikzcd}
        \D^\bdd(\Mod R) \times \D^\bdd(\lMod R) \arrow{d}{\cong}[swap]{(\RHom_R(T,-), T \otimes_R^\mathbf{L} -)} \arrow{r}{- \otimes_R^\mathbf{L} -} & \D(\Mod \Zbb) \arrow{d}{=} \\
        \D^\bdd(\Ctra \Sfr) \times \D^\bdd(\Disc \Sfr) \arrow{r}{- \ctrtensor_{\Sfr}^\mathbf{L} -} & \D(\Mod \Zbb) 
    \end{tikzcd}
\end{equation}
Note that in this picture, certain asymmetry appears between the tilting and cotilting equivalence which was not visible in the classical tilting situation \cref{I1}. In our main \cref{converse}, we show that a converse of our results can be formulated as well, resulting in a generalization of Rickard's derived Morita theory for decent tilting complexes. Namely, there is a complete and separated topological ring $\Sfr$ and a couple of triangle equivalences $\D^\bdd(\Mod R) \cong \D^\bdd(\Ctra \Sfr)$ and $\D^\bdd(\lMod R) \cong \D^\bdd(\Disc \Sfr)$ which make the square \cref{I2} commute if and only if there is a decent tilting complex in $\D^\bdd(\Mod R)$ such that its endomorphism ring endowed with the compact topology is $\Sfr$.

In the final Section 7, we discuss an explicit application to tilting and cotilting complexes arising from a codimension function on the Zariski spectrum of a one-dimensional commutative noetherian ring. In particular, we describe the hearts as certain arrow categories, which resemble the constructions of torsion and complete models developed in \cite{Bsep} and \cite{torsionmodel}.
\section{$\Add$-closures in compactly generated triangulated categories}
The goal of this section is to partially extend the techniques of \cite[\S 6, \S7]{PS21} to a triangulated context by giving a description of the $\Add$-closure of an object $M$ using a topological structure of the endomorphism ring of $M$. 
\subsection{Contramodules over topological rings}
For a comprehensive resource about the theory of contramodules over complete and separated topological rings as developed by Positselski, we refer the reader to the survey \cite{contramodules} and references therein, as well as to \cite[\S 6, \S 7]{PS21} where the setting is very close to ours. Here we recall the basic concepts and notation. Let $\Rfr$ be a (unital, associative) ring. We say that $\Rfr$ is a \newterm{(left) topological ring} if it comes endowed with a linear topology of left ideals, that is, with a filter $(\Ifr_\alpha)_{\alpha \in A}$ of left ideals of $\Rfr$ such that for each $r \in \Rfr$ and $\alpha \in A$ there is $\alpha' \in A$ such that $\Ifr_{\alpha'} \cdot r \subseteq \Ifr_\alpha$. With such a filter fixed, we call the ideals it contains the \newterm{open} left ideals of the topological ring $\Rfr$. There is a natural map $\lambda: \Rfr \to \varprojlim_{\Ifr \subseteq \Rfr \text{ open}} \Rfr/\Ifr$ from $\Rfr$ to the completion with respect to the topology of open left ideals. We say that $\Rfr$ is \newterm{complete} if the map $\lambda$ is surjective and \newterm{separated} if the map is injective.

Let $\Rfr$ be a complete and separated topological ring. Given a set $X$, let $\Rfr[[X]]$\footnote{The notation ``$[[X]]\Rfr$'' would perhaps make better syntactic sense here. Since we are only dealing with right $\Rfr$-contramodules in this paper, we will stick to the symbol ``$\Rfr[[X]]$'' which is a bit easier to read.} denote the set of all (possibly infinite) formal linear combinations $\sum_{x \in X} x \cdot r_x$ of elements of the set $X$ with coefficients $r_x \in \Rfr$ such that the family $(r_x)_{x \in X}$ converges to zero in the topology of $\Rfr$, that is, if for any open left ideal $\Ifr$ we have $r_x \in \Ifr$ for all but finitely many $x \in X$. This assignment defines a functor $\Rfr[[-]]: \Sets \to \Sets$ on the category of all sets. Indeed, given a map $f: X \to Y$ of sets, the induced map $\Rfr[[f]]: \Rfr[[X]] \to \Rfr[[Y]]$ is defined by sending an element $\sum_{x \in X} x \cdot r_x$ to $\sum_{x \in X} f(x) \cdot r_x = \sum_{y \in Y}y \cdot s_y$, where the coefficient $s_y = \sum_{f(x) = y}r_x$ is well-defined using the fact that $\Rfr$ is complete and separated and the coefficients converge to zero. There is the ``opening of parentheses'' map $\mu_X: \Rfr[[\Rfr[[X]]]] \to \Rfr[[X]]$ (which is the obvious assignment of a formal linear combination to a formal linear combination of formal linear combinations) and the ``trivial linear combination'' map $\epsilon_X: X \to \Rfr[[X]]$, the well-defined-ness of $\mu_X$ is again ensured by the complete and separated assumption on the topology. Then the functor $\Rfr[[-]]$ on $\Sets$ together with the two natural transformation $\mu$ and $\epsilon$ form an additive monad on the category of sets, and one can therefore speak about modules (=algebras) over this monad --- these are precisely the \newterm{right $\Rfr$-contramodules}. Explicitly, a right $\Rfr$-contramodule is a set $\Mfr$ together with a \newterm{contraaction} map $\pi: \Rfr[[\Mfr]] \to \Mfr$ satisfying two axioms: first we have two maps $\mu_{\Mfr}, \Rfr[[\pi]]: \Rfr[[\Rfr[[\Mfr]]]] \to \Rfr[[\Mfr]]$ and these need to equalize after composing with $\pi: \Rfr[[\Mfr]] \to \Mfr$ (``contra-associativity''), and secondly the composition of $\pi \circ \epsilon_{\Mfr}: \Mfr \to \Rfr[[\Mfr]] \to \Mfr$ needs to be the identity map (``contra-unitality'').

We denote the category of all right $\Rfr$-contramodules by $\Ctra \Rfr$. It turns out that $\Ctra \Rfr$ is a complete and cocomplete locally presentable abelian category. For any set $X$, the map $\mu_X$ endows the set $\Rfr[[X]]$ with a natural structure of a right $\Rfr$-contramodule, and in fact, if $\star$ is a singleton set then $\Rfr = \Rfr[[\star]]$ is a projective generator of $\Ctra \Rfr$, while $\Rfr[[X]]$ is the coproduct of $X$ copies of $\Rfr$ in $\Ctra \Rfr$. In particular, $\Ctra \Rfr$ has enough projectives and the full subcategory of projective objects $\Ctrapr \Rfr$ consists precisely of direct summands of objects of the form $\Rfr[[X]]$ for some set $X$. There is a natural forgetful functor $\Ctra \Rfr \to \Mod \Rfr$ which simply restricts the contraaction just to finite linear combinations with coefficients in $\Rfr$. This forgetful functor is in general not fully faithful, and while it respects products, it usually does not preserve coproducts. 
\subsection{Modules over ringoids}
By a \newterm{ringoid} we understand a skeletally small preadditive category. A ringoid $\Rcal$ gives rise to the category $\Mod \Rcal$ of all contravariant additive functors $\Rcal \to \Mod \Zbb$, we call its objects the \newterm{right $\Rcal$-modules}. Then $\Mod \Rcal$ is a Grothendieck category with a generating set of finitely presented projective objects given by the representable functors $\Hom_\Rcal(-,R)$, where $R$ runs over some skeleton of $\Rcal$. The classical case of right modules over an associative unital ring is recovered by restricting to ringoids with precisely one object.

Given an object $X$ in an additive category $\Ccal$ with arbitrary coproducts, we let $\Add_{\Ccal}(X)$ denote the full subcategory consisting of direct summands of coproducts of copies of $X$, dually we also define $\Prod_{\Ccal}(X)$ using products. We drop the subscript if the ambient category $\Ccal$ is clear from the context. Given a ringoid $\Rcal$ and a module $M \in \Mod \Rcal$, consider the endomorphism ring $\Sfr = \End_{\Mod \Rcal}(M)$. Following \cite[\S 7.1]{PS21} and the references therein, we endow $\Sfr$ with the \newterm{finite topology}, in which the filter base of open left ideals consists of ideals of the form $\Ifr_F = \{g \in \End_{\Mod \Rcal}(M) \mid g_{\restriction F} = 0\}$ where $F$ runs through finitely generated subobjects $F$ of $M$. Equivalently, the base consists of ideals of the form $\Ifr_f = \{g \in \End_{\Mod \Rcal}(M) \mid g \circ f = 0\}$ where $f$ runs through morphisms $f: P \to M$ where $P$ is a finitely generated projective object of $\Mod \Rcal$. The following result of Positselski and Šťovíček is the initial point of our study.
\begin{theorem}\cite[Theorem 7.1, Proposition 7.3]{PS21}\label{T:PS}
    Let $\Rcal$ be a ringoid and $M \in \Mod \Rcal$. Consider the endomorphism ring $\Sfr = \End_{\Mod \Rcal}(M)$ endowed with the finite topology. Then $\Sfr$ is a complete and separated (left) topological ring and the functor $\Hom_{\Mod \Rcal}(M,-): \Mod \Rcal \to \Mod \Sfr$ factors through the forgetful functor $\Ctra \Sfr \to \Mod \Sfr$ and induces an equivalence $\Hom_{\Mod \Rcal}(M,-): \Add(M) \toeq \Ctrapr \Sfr$.
\end{theorem}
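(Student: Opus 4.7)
My plan proceeds in three steps matching the three assertions. First I would verify that $\Sfr$ endowed with the finite topology is a left topological ring: given $r \in \Sfr$ and $f \colon P \to M$ with $P$ finitely generated projective, setting $f' = r \circ f$ yields $\Ifr_{f'} \cdot r \subseteq \Ifr_f$ directly from the defining condition. For completeness and separatedness I would use that $M$ is the directed union of its finitely generated subobjects $F$ (available since $\Mod \Rcal$ is Grothendieck with a finitely generated projective generating family). This gives $\Sfr = \Hom(M, M) \cong \varprojlim_F \Hom(F, M)$, and combining with the injections $\Sfr/\Ifr_F \hookrightarrow \Hom(F, M)$ coming from restriction identifies $\Sfr$ with its completion $\varprojlim_F \Sfr/\Ifr_F$.

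Next, for any $N \in \Mod \Rcal$, I would define the contraaction $\pi_N \colon \Sfr[[\Hom(M,N)]] \to \Hom(M,N)$ pointwise: for a formal combination $\sum_i h_i \cdot s_i$ with $(s_i) \to 0$ in the topology of $\Sfr$, the image morphism $M \to N$ is specified by its values on test maps $x \colon P \to M$ from a finitely generated projective, via $\pi_N(\sum_i h_i \cdot s_i) \circ x := \sum_i h_i \circ (s_i \circ x)$. This sum is finite since convergence $(s_i) \to 0$ forces $s_i \circ x = 0$ for almost all $i$, and compatibility as $x$ varies assembles the data into a well-defined morphism $M \to N$. Contra-associativity and contra-unitality then reduce to bilinearity of composition, while compatibility with the forgetful functor is immediate because restricting to constant formal combinations recovers the usual $\Sfr$-action on $\Hom(M,N)$.

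The core of the statement is the equivalence $\Add(M) \toeq \Ctrapr \Sfr$, for which the key computation is the identification $\Hom_{\Mod \Rcal}(M, M^{(X)}) \cong \Sfr[[X]]$ as right $\Sfr$-contramodules for every set $X$. A morphism $f \colon M \to M^{(X)}$ is determined by its components $f_x \in \Sfr$, and naturality amounts to the statement that for every $x' \colon P \to M$ from a finitely generated projective, $f_x \circ x' = 0$ for all but finitely many $x$ --- precisely convergence $(f_x) \to 0$ in the finite topology. Specializing to $X = \star$ matches the projective generator $\Sfr \in \Ctrapr \Sfr$ with $M \in \Add(M)$; taking arbitrary $X$ realizes the free contramodules $\Sfr[[X]]$; and closure under direct summands then produces the equivalence.

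I expect the main obstacle to be verifying that the above bijection is an isomorphism of contramodules and not merely of underlying sets: one must check that the opening-of-parentheses map $\mu_X \colon \Sfr[[\Sfr[[X]]]] \to \Sfr[[X]]$ corresponds on the morphism side to the composition of nested families of endomorphisms of $M$, a comparison where the complete-and-separated hypothesis on $\Sfr$ is essential to make sense of the intermediate convergent sums. Once this compatibility is in place, essential surjectivity onto $\Ctrapr \Sfr$ follows because every projective contramodule is by definition a direct summand of some $\Sfr[[X]]$, which pulls back through the equivalence to a direct summand of $M^{(X)}$ in $\Add(M)$.
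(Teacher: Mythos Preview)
The paper does not give its own proof of this statement: it is cited verbatim from \cite[Theorem 7.1, Proposition 7.3]{PS21} and used as a black box. The only indication of the underlying mechanism appears in the subsequent Remark, which records that the heart of the proof in \cite{PS21} is the isomorphism of additive monads $X \mapsto \Hom_{\Mod \Rcal}(M,M^{(X)})$ and $X \mapsto \Sfr[[X]]$ on $\Sets$.

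Your proposal is a correct outline and is precisely this monad-isomorphism approach, so it agrees with the method of \cite{PS21} as summarized in the Remark. One point you leave implicit is full faithfulness: you match objects ($M^{(X)} \leftrightarrow \Sfr[[X]]$) and pass to summands, but an equivalence also requires $\Hom_{\Mod \Rcal}(M^{(X)},M^{(Y)}) \cong \Hom_{\Ctra \Sfr}(\Sfr[[X]],\Sfr[[Y]])$. This does follow formally once the monad isomorphism is in place, since both sides are then $\Hom_{\Sets}(X,\Sfr[[Y]])$ by the free--forgetful adjunction, but it is worth making explicit that this is where the monad comparison (your ``main obstacle'') actually pays off.
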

\begin{rmk}\label{R:PS}
    \cite[Theorem 7.1]{PS21} shows in particular that the two additive monads on the category of sets defined by the rules $X \mapsto \Hom_{\Mod \Rcal}(M,M^{(X)})$ and $X \mapsto \Sfr[[X]]$ (see \cite[\S 6.3]{PS21}) are isomorphic, which is how the natural $\Sfr$-module structure on $\Hom_{\Mod \Rcal}(M,M^{(X)})$ extends to a right $\Sfr$-contramodule structure.

    If $M \in \Mod \Rcal$ is a finitely generated module then the finite topology on $\Sfr$ becomes discrete, and thus $\Ctra \Sfr = \Mod \Sfr$. In this case, \cref{T:PS} recovers the classical equivalence $\Add(M) \cong \Modpr \Sfr$ of \cite{Dress}.
\end{rmk}
\subsection{Compactly generated triangulated categories}
Let $\Tcal$ be a triangulated category with suspension functor $-[1]$ and assume that $\Tcal$ has arbitrary coproducts. Recall that an object $F \in \Tcal$ is \newterm{compact} if the functor $\Hom_\Tcal(F,-): \Tcal \to \Mod \Zbb$ preserves coproducts, and denote by $\Tcal^\cpt$ the full subcategory of compact objects. Unless specified otherwise, we assume that the triangulated category $\Tcal$ is \newterm{compactly generated}, which means that $\Tcal^\cpt$ is skeletally small and any object $X \in \Tcal$ such that $\Hom_\Tcal(F,X) = 0$ for all $F \in \Tcal^\cpt$ has to be zero; this assumption also implies that $\Tcal$ has arbitrary products. 

Viewing $\Tcal^\cpt$ as a ringoid yields a theory of purity in $\Tcal$, as developed by Krause \cite{Kr00}. The main ingredient is the \newterm{restricted Yoneda functor} $\yo: \Tcal \rightarrow \Mod \Tcal^\cpt$ defined by the assignment $X \mapsto \Hom_{\Tcal}(-,X)_{\restriction \Tcal^\cpt}$. This is a conservative cohomological functor (however, it is rarely faithful and can even fail to be full). A triangle $X \xrightarrow{f} Y \xrightarrow{g} Z \xrightarrow{} X[1]$ in $\Tcal$ is \newterm{pure} provided that it is sent to a short exact sequence $0 \to \yo X \xrightarrow{\yo f} \yo Y \xrightarrow{\yo g} \yo Z \to 0$ in $\Mod \Tcal^\cpt$. If this is the case, we say that $f$ (resp. $g$) is a \newterm{pure monomorphism} (resp. \newterm{pure epimorphism}) in $\Tcal$. An object $E \in \Tcal$ is \newterm{pure-injective} provided that $\yo E$ is an injective object in $\Mod \Tcal^\cpt$. A \newterm{pure-injective envelope} of object $X$ is an object $PE(X)$ together with a map $e: X \to PE(X)$ such that $\yo e: \yo X \rightarrow \yo PE(X)$ identifies with the injective envelope map in $\Mod \Tcal^\cpt$. The pure-injective envelope exists for any object $X \in \Tcal$ and $PE(X)$ is uniquely determined up to isomorphism. \newterm{Pure-projective} objects in $\Tcal$ are defined analogously. A morphism $f$ in $\Tcal$ is called \newterm{phantom} provided that $\yo f = 0$. For $X,Y \in \Tcal$, the kernel of the map $\Hom_{\Tcal}(X,Y) \rightarrow \Hom_{\Mod \Tcal^\cpt}(\yo X, \yo Y)$ consists precisely of the phantom morphisms $X \to Y$. If $X$ is pure-projective (in particular, if $X$ is compact) or if $Y$ is pure-injective, any phantom map $X \to Y$ is zero in $\Tcal$. See \cite[\S 1, \S 2]{Kr00} for details.

Let $M \in \Tcal$ and denote its endomorphism ring by $\Sfr = \End_{\Tcal}(M)$. In complete analogy with the finite topology, we define a natural linear topology on the ring $\Sfr$ which we call the \newterm{compact topology}. The base of open left ideals of $\Sfr$ is given by ideals of the form $\Ifr_f = \{g \in \Sfr \mid g \circ f = 0\}$ for all maps $f: F \rightarrow M$ in $\Tcal$ with $F \in \Tcal^\cpt$. The compact topology makes $\Sfr$ into a left topological ring. Indeed, for any $f: F \to M$ with $F \in \Tcal^\cpt$ and any $s \in \Sfr$ we have $\Ifr_{s \circ f} \cdot s = \{g \circ  s \in \Sfr \mid g \circ  s \circ  f = 0\} \subseteq \Ifr_f = \{ g \in \Sfr \mid g \circ  f = 0\}$.

\begin{rmk}
    Let $\Rcal$ be a ringoid, $M \in \Mod \Rcal$, and let $\Sfr = \End_{\Mod \Rcal}(M) = \End_{\D(\Mod \Rcal)}(M)$ be the endomorphism ring; here $\D(\Mod \Rcal)$ is the derived category of $\Mod \Rcal$. Then $\Sfr$ can be endowed with either the finite topology induced in $\Mod \Rcal$ or the compact topology induced in $\D(\Mod \Rcal)$, and these two topologies coincide. Indeed, the compact topology is clearly finer because finitely generated projectives modules are compact objects of the derived category. On the other hand, consider $C \in \D(\Mod \Rcal)^\cpt$ a compact object. It is well-known that $C$ can be assumed to be a bounded complex of finitely generated projective $\Rcal$-modules. Then we have $\Hom_{\D(\Mod \Rcal)}(C,M) \cong \Hom_{\K(\Mod \Rcal)}(C,M) \cong \Hom_{\K(\Mod \Rcal)}(\sigma^{\leq 0}C,M) \cong \Hom_{\D(\Mod \Rcal)}(\tau^{\geq 0}\sigma^{\leq 0}C,M)$, where $\sigma^{\leq 0}$ is the stupid truncation to non-positive degrees, $\tau^{\geq 0}$ the smart truncation to non-negative degrees, and $\K(\Mod \Rcal)$ the homotopy category. Note that the induced map $\sigma^{\geq 0}\sigma^{\leq 0}C \to \tau^{\geq 0}\sigma^{\leq 0}C$ is an epimorphism in $\Mod \Rcal$. Then for any morphism $f: C \to M$ we have $\Ifr_f = \{ g \in \Sfr \mid g \circ f = 0\} = \{ g \in \Sfr \mid g \circ (\tau^{\geq 0}\sigma^{\leq 0}f) = 0\} = \{ g \in \Sfr \mid g \circ (\sigma^{\geq 0}\sigma^{\leq 0}f) = 0\} = \Ifr_{\sigma^{\geq 0}\sigma^{\leq 0}f}$, where $\sigma^{\geq 0}\sigma^{\leq 0}f$ is an $\Rcal$-module map from the finitely generated projective $\sigma^{\geq 0}\sigma^{\leq 0}C$ to $M$. This shows that the finite topology refines the compact topology, and so the two topologies coincide.
\end{rmk}

Unlike in the module-theoretic setting of \cref{T:PS}, we cannot expect the compact topology to be always separated for an object $M \in \Tcal$. Indeed, $\End_\Tcal(M)$ can easily contain non-zero phantom maps. Therefore, we are forced to impose a purity condition to obtain the following triangulated version of \cref{T:PS}.

\begin{prop}\label{addM-tria}
    Let $M \in \Tcal$ be such that the restriction $\yo_{\restriction \Add(M)}: \Add(M) \rightarrow \Mod \Tcal^\cpt$ of the restricted Yoneda functor to $\Add(M)$ is fully faithful. Then $\Sfr=\End_\Tcal(M)$ endowed with the compact topology is a complete and separated topological ring, and there is an equivalence $\Hom_{\Tcal}(M,-): \Add(M) \toeq \Ctrapr \Sfr$.
\end{prop}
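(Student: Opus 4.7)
The plan is to reduce the statement to \cref{T:PS} applied to the ringoid $\Rcal := \Tcal^\cpt$ and the module $\hat M := \yo M \in \Mod \Rcal$. Since compact objects commute with coproducts, $\yo$ preserves coproducts, so $\yo(M^{(X)}) = \hat M^{(X)}$, and hence $\yo$ restricts to a functor $\Add(M) \to \Add(\hat M)$. The hypothesis that $\yo_{\restriction \Add(M)}$ is fully faithful supplies in particular a ring isomorphism $\alpha \colon \Sfr = \End_\Tcal(M) \toeq \End_{\Mod \Rcal}(\hat M)$, and by the Yoneda lemma there is a natural bijection $\Hom_\Tcal(F,M) \cong \Hom_{\Mod \Rcal}(\yo F, \hat M)$ for every $F \in \Tcal^\cpt$.

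The key technical step is to show that $\alpha$ is a homeomorphism between the compact topology on $\Sfr$ and the finite topology on $\End_{\Mod \Rcal}(\hat M)$. Given $f \colon F \to M$ with $F \in \Tcal^\cpt$, the functor $\yo F$ is representable and therefore finitely generated projective in $\Mod \Rcal$, and by construction $\alpha(\Ifr_f) = \Ifr_{\yo f}$; so $\alpha$ sends every basic open ideal of the compact topology to a basic open ideal of the finite topology. Conversely, any finitely generated projective $P \in \Mod \Rcal$ is a direct summand of some $\yo F$ with $F \in \Tcal^\cpt$, say via maps $i \colon P \to \yo F$ and $p \colon \yo F \to P$ with $p \circ i = 1_P$. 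Given any $\phi \colon P \to \hat M$, set $\psi := \phi \circ p$ and use Yoneda to write $\psi = \yo f$ for a unique $f \colon F \to M$; a direct computation shows $\alpha(\Ifr_f) = \Ifr_\psi \subseteq \Ifr_\phi$, so every basic open of the finite topology contains a basic open of the compact topology. Hence the two topologies agree under $\alpha$, and \cref{T:PS} now identifies the compact topology on $\Sfr$ with a complete and separated topology.

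It remains to construct the advertised equivalence. By \cref{T:PS}, the functor $\Hom_{\Mod \Rcal}(\hat M,-)$ restricts to an equivalence $\Add(\hat M) \toeq \Ctrapr \End_{\Mod \Rcal}(\hat M)$, which combined with $\alpha$ yields an equivalence $\Add(\hat M) \toeq \Ctrapr \Sfr$. To finish, I verify that $\yo \colon \Add(M) \to \Add(\hat M)$ is itself an equivalence. It is fully faithful by assumption, so only essential surjectivity is at stake: every object of $\Add(\hat M)$ arises as the image of an idempotent $e \in \End_{\Mod \Rcal}(\hat M^{(X)})$ for some set $X$. Since $M^{(X)} \in \Add(M)$, the hypothesis supplies a ring isomorphism $\End_\Tcal(M^{(X)}) \cong \End_{\Mod \Rcal}(\hat M^{(X)})$, so $e$ lifts to an idempotent $e' \in \End_\Tcal(M^{(X)})$; because compactly generated triangulated categories admit splittings of idempotents (a standard result of Neeman based on the existence of countable coproducts), $e'$ induces a decomposition $M^{(X)} = N \oplus N'$ in $\Tcal$ whose image under $\yo$ realizes the splitting of $e$. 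Finally, full faithfulness of $\yo$ on $\Add(M)$ yields a natural isomorphism $\Hom_{\Mod \Rcal}(\hat M, \yo N) \cong \Hom_\Tcal(M,N)$ for $N \in \Add(M)$, so the composite equivalence is precisely $\Hom_\Tcal(M,-) \colon \Add(M) \toeq \Ctrapr \Sfr$.

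The main obstacle is the topology matching in the second paragraph, which boils down to the Yoneda-style observation that a morphism $\yo F \to \hat M$ is the same as a morphism $F \to M$ in $\Tcal$. Once the two topologies are identified, the rest follows formally from \cref{T:PS}, the Yoneda lemma, and the splitting of idempotents in compactly generated triangulated categories.
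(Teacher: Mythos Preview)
Your proof is correct and follows essentially the same route as the paper's: reduce to \cref{T:PS} via the restricted Yoneda functor, match the compact and finite topologies, and identify $\Hom_\Tcal(M,-)$ with $\Hom_{\Mod \Tcal^\cpt}(\yo M,-)$ on $\Add(M)$. You spell out the essential surjectivity of $\yo\colon \Add(M)\to\Add(\yo M)$ via idempotent splitting in $\Tcal$, which the paper takes for granted, and your topology-matching argument via direct summands of representables is a harmless variant of the paper's observation that the finitely generated projectives in $\Mod\Tcal^\cpt$ are exactly the $\yo C$'s (this uses that $\Tcal^\cpt$ is idempotent complete).
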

\begin{proof}
    The assumption yields in particular that $\yo$ induces a ring isomorphism $\Sfr \cong \End_{\Mod \Tcal^\cpt}(\yo M)$. Since $\yo$ preserves coproducts, we also have that $\Add(M) = \Add_{\Tcal}(M) \cong \Add_{\Mod \Tcal^\cpt}(\yo M)$. Then it follows directly from \cref{T:PS} that $\Sfr$ is complete and separated and $\Add(M)$ is equivalent to $\Ctrapr \Sfr$, with the caveat that here $\Sfr$ is endowed with the finite topology induced on $\yo M \in \Mod \Tcal^\cpt$. The equivalence is induced by the functor $\Hom_{\Mod \Tcal^\cpt}(\yo M,-): \Mod \Tcal^\cpt \to \Ctra \Sfr$, and the restriction of this functor to $\Add_{\Mod \Tcal^\cpt}(\yo M)$ is identified with the functor $\Hom_{\Tcal}(M,-): \Add(M) \rightarrow \Ctrapr \Sfr$.
    
    It remains to show that the compact topology induced in $\Tcal$ and the finite topology induced in $\Mod \Tcal^\cpt$ on $\Sfr$ coincide. But to see that, it is enough to recall that finitely generated subobjects $F \xhookrightarrow{} \yo M$ are precisely the images of morphisms $\yo C \rightarrow \yo M$ with $C \in \Tcal^\cpt$, because the objects of the form $\yo C$ with $C \in \Tcal^\cpt$ are precisely the finitely generated projective objects of $\Mod \Tcal^\cpt$. But any such morphism is of the form $\yo(f)$ for a morphism $f: C \rightarrow M$ in $\Tcal$, see \cite[Theorem 1.8]{Kr00}. It follows that the ring isomorphism $\Sfr \cong \End_{\Mod \Tcal^\cpt}(\yo M)$ induced by $\yo$ identifies the compact and the finite topology.
\end{proof}
\begin{cor}
    Let $M$ be a pure-projective object of $\Tcal$. Then $\Add(M) \cong \Ctrapr \Sfr$. In particular, if $M$ is a compact object then $\Add(M) \cong \Ctrapr \Sfr = \Modpr \Sfr$.
\end{cor}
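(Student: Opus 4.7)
The plan is to reduce the corollary to the preceding proposition by verifying its sole hypothesis, namely that $\yo_{\restriction \Add(M)}$ is fully faithful whenever $M$ is pure-projective; once this is done, the equivalence $\Add(M) \cong \Ctrapr \Sfr$ follows at once from \cref{addM-tria}.

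The first step is to observe that $\Add(M)$ consists entirely of pure-projective objects. Pure-projectivity of $M$ means that $\yo M$ is projective in the Grothendieck category $\Mod \Tcal^\cpt$, and since $\yo$ preserves coproducts while projectives are closed under coproducts and direct summands, $\yo N$ is projective for every $N \in \Add(M)$. Faithfulness of $\yo_{\restriction \Add(M)}$ is then immediate from the earlier remark that the kernel of $\Hom_\Tcal(X,Y) \to \Hom_{\Mod \Tcal^\cpt}(\yo X,\yo Y)$ consists of phantom maps, and that phantoms out of a pure-projective source vanish.

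For fullness I will invoke Krause's structural description \cite{Kr00} that the pure-projective objects of $\Tcal$ are precisely the direct summands of coproducts of compact objects. For a compact $C \in \Tcal^\cpt$, the Yoneda lemma inside $\Mod \Tcal^\cpt$ identifies $\Hom_{\Mod \Tcal^\cpt}(\yo C, F)$ with $F(C)$ for every $F \in \Mod \Tcal^\cpt$, and specialising to $F = \yo Y$ recovers $\Hom_\Tcal(C,Y)$. Because $\yo$ converts coproducts in $\Tcal$ into coproducts in $\Mod \Tcal^\cpt$, and since $\Hom_\Tcal(-,Y)$ and $\Hom_{\Mod \Tcal^\cpt}(-,\yo Y)$ both turn first-variable coproducts into products, this bijection propagates to any coproduct $\bigoplus_i C_i$ of compacts, and then to any of its direct summands. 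Applied with first variable $M'$ and second variable $M''$ for $M', M'' \in \Add(M)$, this gives fullness and completes the verification of the hypothesis of \cref{addM-tria}.

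For the second clause, any compact $M$ is automatically pure-projective (since $\yo M$ is finitely generated projective), so the first part already gives $\Add(M) \cong \Ctrapr \Sfr$; it only remains to check that the compact topology on $\Sfr$ is discrete. But the identity $1_M: M \to M$ is an admissible morphism with compact source, and the corresponding basic open ideal $\Ifr_{1_M}$ is already the zero ideal. Under a discrete topology the convergence-to-zero condition forces cofinitely many coefficients to be zero, so $\Sfr[[X]] = \Sfr^{(X)}$ and the contramodule monad on $\Sets$ coincides with the usual free-module monad; hence $\Ctra \Sfr = \Mod \Sfr$ and $\Ctrapr \Sfr = \Modpr \Sfr$. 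The only genuinely nontrivial ingredient in the whole argument is the appeal to Krause's characterisation of pure-projectives, which reduces the triangulated fullness problem to a routine Yoneda computation on compact objects.
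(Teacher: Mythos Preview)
Your argument is correct and follows essentially the same route as the paper: verify the full faithfulness hypothesis of \cref{addM-tria} using Krause's results on pure-projectives, and then observe that the compact topology is discrete when $M$ is compact. The only difference is cosmetic: the paper cites \cite[Theorem 1.8]{Kr00} directly for the bijection $\Hom_\Tcal(M',M'') \cong \Hom_{\Mod \Tcal^\cpt}(\yo M', \yo M'')$ when the source is pure-projective, whereas you cite Krause's characterisation of pure-projectives as summands of coproducts of compacts and then rederive the bijection by a Yoneda-and-extend argument.
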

\begin{proof}
    If $M$ is pure-projective then the isomorphism follows directly from \cref{addM-tria} and \cite[Theorem 1.8]{Kr00}. If $M$ is compact then $M$ is pure-projective and, similarly to \cref{R:PS}, the compact topology on $\Sfr$ becomes discrete so that $\Ctra \Sfr = \Mod \Sfr$.
\end{proof}
\subsection{Hearts of {{\tst}}structures}
For a short moment, let $\Tcal$ be \newterm{any} triangulated category. Given a full subcategory $\Ccal \subseteq \Tcal$, we define full subcategories $\Ccal\Perp{\bigcirc} = \{X \in \Tcal \mid \Hom_\Tcal(C,X[i])=0 ~\forall C \in \Ccal, i ~\bigcirc\}$ of $\Tcal$ where $\bigcirc$ denotes a symbol such as $0,\leq 0, >0, \neq 0$ and others which specify a set of integers. Analogously, we define full subcategories $\Perp{\bigcirc}\Ccal = \{X \in \Tcal \mid \Hom_\Tcal(X,C[i])=0 ~\forall C \in \Ccal, i ~\bigcirc\}$ and if $\Ccal = \{C\}$, we write just $C\Perp{\bigcirc}$ and $\Perp{\bigcirc}C$. Recall that a \newterm{{{\tst}}structure} $\Tbb$ in $\Tcal$ is a pair $\Tbb = (\Ucal,\Vcal)$ of full subcategories such that:
\begin{enumerate}
    \item[(i)] $\Vcal = \Ucal\Perp{0}$,
    \item[(ii)] $\Ucal$ is closed under suspension, that is, $\Ucal[1] \subseteq \Ucal$,
    \item[(iii)] each object $X \in \Tcal$ fits into a triangle $U \to X \to V \to U[1]$ with $U \in \Ucal$ and $V \in \Vcal$.  
\end{enumerate}
Following Beilinson, Bernstein, and Deligne \cite{BBD81}, any {{\tst}}structure gives rise to an abelian category $\Hcal_\Tbb = \Ucal \cap \Vcal[1]$ called the \newterm{heart} of the {{\tst}}structure. The short exact sequences in $\Hcal_\Tbb$ are induced by those triangles of $\Tcal$ with all three components lying in the heart. In particular, a morphism $f: X \to Y$ in $\Hcal$ is a monomorphism if and only if its mapping cone $\Cone(f)$ in $\Tcal$ belongs to $\Hcal$.

Let now $\Tcal$ be again a compactly generated triangulated category, and let us recall an important notion from the theory of purity. A subcategory $\Ccal$ of $\Tcal$ is called \newterm{definable} provided that there is a set $\Phi$ of maps in $\Tcal^\cpt$ such that 
$$\Ccal = \{X \in \Tcal \mid \Hom_\Tcal(f,X) \text{ is the zero map $~\forall f \in \Phi$}\}.$$ 
Given an object $X \in \Tcal$, let $\Def_\Tcal(X)$ (or just $\Def(X)$ if $\Tcal$ is clear from context) denote the smallest definable subcategory of $\Tcal$ which contains $X$. Note that $\Def(X)$ exists, as it can be defined using the set $\Phi$ of all maps $f$ in $\Tcal^\cpt$ such that $\Hom_\Tcal(f,X)$ is the zero map. It is clear from the definition that any definable subcategory is closed under coproducts, products, pure subobjects, and pure quotients. Here, an object $Y \in \Tcal$ is a \newterm{pure subobject} of $X \in \Tcal$ if there is a pure monomorphism $Y \to X$, the \newterm{pure quotients} are defined similarly. By \cite[Corollary 4.4]{AHMV17}, definable subcategories are also closed under taking pure-injective envelopes, meaning that $PE(X)$ belongs to a definable subcategory whenever $X$ does.

\begin{prop}\label{tstr-gen}
Let $\Tbb$ be a {{\tst}}structure in $\Tcal$ and consider an object $M \in \Hcal_\Tbb$ in its heart. Assume the following condition:  $\Def_{\Tcal}(M) \subseteq \Hcal_\Tbb$. 
    
Then the restriction $\yo_{\restriction \Def_{\Tcal}(M)}: \Def_{\Tcal}(M) \to \Mod \Tcal^\cpt$ is fully faithful. As a consequence, $\Add_\Tcal(M) \cong \Ctrapr \Sfr$ via $\Hom_\Tcal(M,-)$, where $\Sfr = \End_{\Tcal}(M)$ is endowed with the compact topology, and this topological ring is complete and separated. 
\end{prop}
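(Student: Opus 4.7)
The plan is to reduce the entire statement to \cref{addM-tria}, whose hypothesis requires $\yo_{\restriction \Add_\Tcal(M)}$ to be fully faithful. Since $M \in \Def_\Tcal(M)$ and definable subcategories are closed under coproducts and direct summands (direct summands being pure subobjects), $\Add_\Tcal(M) \subseteq \Def_\Tcal(M)$; it therefore suffices to establish the stronger assertion of the proposition, namely that $\yo$ is fully faithful on all of $\Def_\Tcal(M)$. The equivalence $\Add_\Tcal(M) \cong \Ctrapr \Sfr$ via $\Hom_\Tcal(M,-)$ together with the completeness and separatedness of $\Sfr$ in the compact topology will then follow at once from \cref{addM-tria}.

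For faithfulness, I would take a phantom $f : X \to Y$ between objects of $\Def_\Tcal(M)$ and compose with the pure-injective envelope $e : Y \to PE(Y)$. Since $PE(Y)$ again lies in $\Def_\Tcal(M)$ by \cite[Corollary 4.4]{AHMV17}, and phantom maps into pure-injectives vanish, $e \circ f = 0$, so $f$ factors through the fiber $Z[-1]$ of $e$, where $Z = \Cone(e)$. The crucial observation is that $Z$ is a pure quotient of $PE(Y)$, hence still belongs to $\Def_\Tcal(M) \subseteq \Hcal_\Tbb \subseteq \Vcal[1]$, so that $Z[-1] \in \Vcal$. Combined with $X \in \Hcal_\Tbb \subseteq \Ucal$ and the defining orthogonality $\Vcal = \Ucal\Perp{0}$, this forces $\Hom_\Tcal(X, Z[-1]) = 0$, and therefore $f = 0$.

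For fullness, I would invoke the classical fact from \cite[\S 1]{Kr00} that $\yo$ is fully faithful on morphisms with pure-injective target; applied to $PE(Y)$, it lifts $\yo e \circ \alpha$ to a morphism $g : X \to PE(Y)$ in $\Tcal$. Composing with $\pi : PE(Y) \to Z$, the map $\pi \circ g$ becomes phantom under $\yo$ and vanishes by the faithfulness step (as $X, Z \in \Def_\Tcal(M)$), so $g = e \circ f$ for some $f : X \to Y$; finally, $\yo f = \alpha$ because $\yo e$ is a monomorphism. I expect the main obstacle to be the faithfulness step, as this is where the $t$-structure hypothesis really plays a role, via the orthogonality $\Hom(\Ucal, \Vcal) = 0$ combined with the closure of $\Def_\Tcal(M)$ under pure-injective envelopes. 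Fullness and the final consequence are then essentially formal.
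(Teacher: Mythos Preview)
Your proposal is correct and follows essentially the same approach as the paper. The paper's faithfulness argument phrases the key point as ``$e$ is a monomorphism in $\Hcal_\Tbb$'' (since $\Cone(e) \in \Hcal_\Tbb$), which is exactly your observation that $Z[-1] \in \Vcal$ and $X \in \Ucal$ force $\Hom_\Tcal(X,Z[-1]) = 0$; and the paper's fullness argument packages your lift-and-factor chase into a single application of the Four Lemma to the diagram obtained by applying $\Hom_\Tcal(X,-)$ and $\Hom_{\Mod \Tcal^\cpt}(\yo X,-)$ to the pure triangle $Y \to PE(Y) \to Z \xrightarrow{+}$.
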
 
\begin{proof}
    Recall from the discussion above that $\Def_{\Tcal}(M)$ is closed under pure subobjects, pure quotients, and pure-injective envelopes. Then the condition $\Def_{\Tcal}(M) \subseteq \Hcal_\Tbb$ implies that for any $Y \in \Def_{\Tcal}(M)$, all three components of the triangle $Y \to PE(Y) \to L \xrightarrow{+}$ belong to $\Def_{\Tcal}(M)$, and thus also to $\Hcal_\Tbb$. It follows that the pure-injective envelope map $i: Y \to PE(Y)$ is a monomorphism in the heart $\Hcal_\Tbb$. If $X \in \Hcal_\Tbb$ and $f: X \to Y$ is a phantom map in $\Tcal$, then $i \circ f$ is zero (both in $\Tcal$ and $\Hcal_\Tbb$). Since $i$ is a monomorphism in $\Hcal_\Tbb$, it follows that $f  = 0$. We showed that $\yo: \Hom_{\Tcal}(X,Y) \to \Hom_{\Mod \Tcal^\cpt}(\yo X, \yo Y)$ is a monomorphism for any $X \in \Hcal_\Tbb$ (in particular, for any $X \in \Def_{\Tcal}(M)$) and $Y \in \Def_{\Tcal}(M)$.

    Furthermore, $\yo$ induces the following commutative diagram with exact rows:
    $$
    \begin{tikzcd}[column sep=tiny, row sep=normal]
        \Hom_{\Tcal}(X,Y[-1]) \arrow{r}\arrow[two heads]{d} & \Hom_{\Tcal}(X,Y) \arrow{r}\arrow[hook]{d} & \Hom_{\Tcal}(X,PE(Y)) \arrow{r}\arrow[hook,two heads]{d} & \Hom_{\Tcal}(X,L) \arrow[hook]{d}\\
        0 \arrow{r} & \Hom_{\Mod \Tcal^\cpt}(\yo X,\yo Y) \arrow{r} & \Hom_{\Mod \Tcal^\cpt}(\yo X,\yo PE(Y)) \arrow{r} & \Hom_{\Mod \Tcal^\cpt}(\yo X,\yo L) 
    \end{tikzcd}$$
    The bottom row is exact because $Y \to PE(Y) \to L \xrightarrow{+}$ is a pure-triangle, and therefore it induces a short exact sequence $0 \to \yo Y \to \yo PE(Y) \to \yo L \to 0$.
    The three rightmost vertical arrows are monomorphisms by the previous discussion, while the third arrow from the left is an isomorphism since $PE(Y)$ is pure-injective \cite[Theorem 1.8]{Kr00}. It follows by the Four Lemma that the second vertical arrow from the left is an isomorphism and we are done.

    Finally, since $\Add_{\Tcal}(M) \subseteq \Def_{\Tcal}(M)$, we get the last sentence of the statement from \cref{addM-tria}.
\end{proof}
\begin{prop}\label{heartequiv}
    Let $\Tbb$ be a {{\tst}}structure in $\Tcal$ and assume there is a projective generator $P$ of the heart $\Hcal_\Tbb$. Assume further that $\Def_{\Tcal}(P) \subseteq \Hcal_\Tbb$ is satisfied and let $\Sfr = \End_{\Tcal}(P)$ be endowed with the compact topology. Then $\Sfr$  is complete and separated and there is an equivalence of abelian categories $\Hom_{\Hcal_\Tbb}(P,-): \Hcal_\Tbb \toeq \Ctra \Sfr$.
\end{prop}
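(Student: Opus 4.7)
The natural plan is to bootstrap from \cref{tstr-gen}, which already handles the projective piece, to the whole heart via the classical Morita-theoretic argument that an equivalence between categories of projectives extends to an equivalence of the ambient abelian categories. First, applying \cref{tstr-gen} with $M = P$ yields that $\Sfr$ is complete and separated and that $\Ecal := \Hom_{\Tcal}(P,-): \Add_\Tcal(P) \toeq \Ctrapr \Sfr$ is an equivalence. The hypothesis $\Def_\Tcal(P) \subseteq \Hcal_\Tbb$ in particular gives $\Add_\Tcal(P) \subseteq \Hcal_\Tbb$, so for any set $I$ the object $P^{(I)}$ formed in $\Tcal$ lies in $\Hcal_\Tbb$; by fullness of the inclusion $\Hcal_\Tbb \subseteq \Tcal$ it then also represents the coproduct of $I$ copies of $P$ in $\Hcal_\Tbb$. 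Since $P$ is a projective generator of $\Hcal_\Tbb$, the standard splitting argument identifies the projective objects of $\Hcal_\Tbb$ with $\Add_\Tcal(P)$.

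Next, I would define $F: \Hcal_\Tbb \to \Ctra \Sfr$ by taking, for each $X \in \Hcal_\Tbb$, a projective presentation $P_1 \xrightarrow{d} P_0 \to X \to 0$ with $P_0, P_1 \in \Add_\Tcal(P)$, and setting $F(X) := \Coker(\Ecal(d))$, the cokernel being formed in $\Ctra \Sfr$. Independence of the choice of presentation up to canonical isomorphism, as well as functoriality, follows from the usual comparison lemma for projective resolutions (lifts exist by projectivity and are unique up to homotopy), using only that $\Ecal$ is an additive equivalence of the chosen categories of projectives. Projectivity of $P$ in $\Hcal_\Tbb$ makes $\Hom_{\Hcal_\Tbb}(P,-)$ exact, and applying it to the presentation identifies the underlying $\Sfr$-module of $F(X)$ with $\Hom_{\Hcal_\Tbb}(P,X)$; so $F$ is genuinely the functor $\Hom_{\Hcal_\Tbb}(P,-)$ enriched with a canonical contramodule structure.

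The pseudoinverse $G: \Ctra \Sfr \to \Hcal_\Tbb$ is constructed symmetrically: $\Ctra \Sfr$ has enough projectives lying in $\Ctrapr \Sfr$ (see the discussion preceding \cref{T:PS}), so one takes a projective presentation in $\Ctra \Sfr$, transports it to $\Add(P)$ via $\Ecal^{-1}$, and takes its cokernel in $\Hcal_\Tbb$. The natural isomorphisms $F G \cong \mathrm{id}$ and $G F \cong \mathrm{id}$ then follow formally from the fact that both categories are the cokernel-closures of their projectives and that $\Ecal$ intertwines these presentations. Full faithfulness of $F$ can alternatively be checked directly: for $X, Y \in \Hcal_\Tbb$, a diagram chase using the exact sequences obtained by applying $\Hom_{\Hcal_\Tbb}(-,Y)$ and $\Hom_{\Ctra\Sfr}(-,F(Y))$ to a presentation of $X$, together with the equivalence $\Ecal$ on $\Add(P)$, yields a bijection of Hom-groups via the five lemma.

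The main obstacle, as usual for this style of argument, is bookkeeping: verifying that the cokernel $\Coker(\Ecal(d))$ is well-defined up to canonical isomorphism independent of the chosen presentation, and that morphisms $X \to X'$ induce well-defined morphisms $F(X) \to F(X')$. Both reduce to the standard comparison theorem for projective resolutions, which applies here because $\Ecal$ is an additive equivalence that, being induced by the restricted Yoneda functor, commutes with the coproducts defining objects of $\Add(P)$ and $\Ctrapr \Sfr$. Once this is in place, essential surjectivity is automatic from the construction of $G$, and the proof is complete.
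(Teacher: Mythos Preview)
Your proof is correct and follows essentially the same approach as the paper: invoke \cref{tstr-gen} to obtain the equivalence $\Add_\Tcal(P) \cong \Ctrapr \Sfr$, identify $\Add_\Tcal(P)$ with the projectives of $\Hcal_\Tbb$, and then extend to an equivalence of the ambient abelian categories via the standard Morita-type argument. The only minor difference is that the paper obtains the contramodule structure on $\Hom_{\Hcal_\Tbb}(P,X)$ by citing \cite[Corollary 6.3]{PS21} directly, whereas you construct it as a cokernel in $\Ctra \Sfr$ of a projective presentation and then identify the underlying $\Sfr$-module; both routes are equivalent, and your version has the advantage of being self-contained while also spelling out the ``standard argument'' that the paper leaves implicit.
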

\begin{proof}
    By \cref{tstr-gen}, $\Hom_{\Tcal}(P,-)$ restricts to an equivalence $\Add_\Tcal(P) \toeq \Ctrapr \Sfr$. Note that the assumption on $P$ ensures that $\Add_{\Tcal}(P) = \Add_{\Hcal_\Tbb}(P) = (\Hcal_\Tbb)_{\mathsf{proj}}$. 
    
    Clearly, $\Hom_{\Hcal_\Tbb}(P,-)$ is well-defined as a functor $\Hcal_\Tbb \to \Mod \Sfr$. Following \cite[Corollary 6.3]{PS21} (also in view of \cite[Remark 6.4 and \S 6.3]{PS21}), this functor factorizes as $\Hom_{\Hcal_\Tbb}(P,-): \Hcal_\Tbb \to \Ctra \Sfr \to \Mod \Sfr$, where the latter is the forgetful functor. Since $\Hom_{\Tcal}(P,-) = \Hom_{\Hcal_\Tbb}(P,-): \Hcal_\Tbb \to \Ctra \Sfr$ is an exact functor between two abelian categories with enough projectives which restricts to an equivalence between the respective categories of projectives, a standard argument yields that we have the desired equivalence.
\end{proof}

\subsection{Silting objects}
In what follows we will focus on {{\tst}}structures coming from the theory of (large) silting and cosilting objects, we refer the reader to the survey \cite[\S 5, \S 6]{AH19} as the basic resource. An object $T$ of $\Tcal$ is \newterm{silting} if the pair $(T\Perp{>0},T\Perp{\leq 0})$ constitutes a {{\tst}}structure in $\Tcal$, which we then call the \newterm{silting {{\tst}}structure}. Any object $T' \in \Tcal$ satisfying $\Add(T) = \Add(T')$ is also a silting object inducing the same silting {{\tst}}structure. In this situation, we say that $T$ and $T'$ are \newterm{equivalent} silting objects. Equivalently, two silting objects $T$ and $T'$ are equivalent if they induce the same silting t-structure, see \cite[Lemma 4.5]{PV20}. Denote the associated heart by $\Hcal_T$ and the induced cohomological functor as $H_T^0: \Tcal \to \Hcal_T$. Then $\Hcal_T$ is an abelian category with exact products such that $H_T^0(T)$ is a projective generator (\cite[Lemma 2.7]{AHMV17}). Note also that in this situation we have $\Hcal_T = T\Perp{\neq 0}$. If in addition $\Add(T) \subseteq T\Perp{<0}$, or equivalently $\Add(T) \subseteq \Hcal_T$, we call $T$ a \newterm{tilting} object. Since then $T = H^0_T(T)$, a tilting object $T$ identifies itself with a projective generator of $\Hcal_T$.

Now we are ready to state the key definition of this paper.

\begin{dfn}
    A silting object $T \in \Tcal$ will be called \newterm{decent} provided that $\Def(T) \subseteq \Hcal_T$. 
\end{dfn}
\begin{rmk}    
    Note that since $\Add(T) \subseteq \Def(T)$, any decent silting object $T$ is a tilting object. Also, clearly the property of being decent is invariant under equivalence of silting objects, and therefore can be viewed as a property of the silting {{\tst}}structure. By \cref{heartequiv}, the heart $\Hcal_T$ of any decent tilting object is equivalent to $\Ctra \Sfr$, where $\Sfr = \End_\Tcal(T)$ is endowed with the compact topology.
\end{rmk}

The remaining results of this section need to assume that $\Tcal$ has a model. For our purposes, a rather weak enhancement in the form of a Grothendieck derivator will suffice, which in particular allows to define the notion of a directed homotopy colimit and homotopy coherent reduced product; we refer to \cite[\S 2, Appendix]{Lak20} and references therein. We remark that $\Tcal$ underlies a derivator whenever $\Tcal$ is the homotopy category of a model category or an $\infty$-category. For what comes in the next sections, we recall that the unbounded derived category $\D(\Mod R)$ of a module category over a ring $R$ underlies the standard compactly generated derivator induced by any of the standard model structures on the category of cochain complexes, the directed homotopy colimits are in this case computed as ordinary direct limits of diagrams in the category of cochain complexes of $R$-modules, see e.g. \cite[Appendix]{HN21}.

\begin{theorem}\label{gen-der-eq}
    Let $\Tcal$ be a triangulated category which is the base of a compactly generated derivator and $T \in \Tcal$ a decent tilting object. Define the following triangulated subcategory of $\Tcal$ of objects bounded with respect to the silting t-structure induced by $T$: 
    $$\Tcal^\bdd = \{X \in \Tcal \mid \Hom_\Tcal(T,X[i]) = 0 \text{ for all but finitely many $i \in \Zbb$}\}.$$ 
    Then there is a triangle equivalence $T^\bdd \cong \D^\bdd(\Ctra \Sfr)$, where $\Sfr = \End_\Tcal(T)$ is endowed with the compact topology.
\end{theorem}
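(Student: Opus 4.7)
The plan is to realize the derived equivalence via the Beilinson-Bernstein-Deligne realization functor for the silting t-structure. The derivator enhancement of $\Tcal$ provides the filtered/f-category machinery needed (as used in \cite{PV18}) to construct a triangulated, t-exact functor $\real \colon \D^\bdd(\Hcal_T) \to \Tcal$ extending the inclusion of $\Hcal_T$. Showing $\real$ is an equivalence onto $\Tcal^\bdd$ and combining with the abelian equivalence $\Hcal_T \cong \Ctra \Sfr$ from \cref{heartequiv} then yields the stated equivalence $\Tcal^\bdd \cong \D^\bdd(\Ctra \Sfr)$.

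Full-faithfulness of $\real$ reduces to the Ext-comparison $\Ext^n_{\Hcal_T}(X, Y) \cong \Hom_\Tcal(X, Y[n])$ for $X, Y \in \Hcal_T$ and $n \in \Zbb$. The case $n = 0$ is tautological, while for $n < 0$ both sides vanish by the t-structure axioms ($X$ lies in the aisle and $Y[n]$ in the co-aisle). For $n > 0$ I would exploit that by \cref{heartequiv} the class $\Add(T)$ coincides with the projectives of $\Hcal_T$. Picking a projective resolution $\cdots \to T^{(I_1)} \to T^{(I_0)} \to X \to 0$ in $\Hcal_T$ with $m$-th syzygy $X_m$, the short exact sequences $0 \to X_{m+1} \to T^{(I_m)} \to X_m \to 0$ correspond to triangles in $\Tcal$. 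Since $Y \in T\Perp{\neq 0}$ gives $\Hom_\Tcal(T^{(I_m)}, Y[k]) = 0$ for $k > 0$, the induced long exact sequences in $\Hom_\Tcal(-,Y[\ast])$ telescope to $\Hom_\Tcal(X, Y[n]) \cong \Hom_\Tcal(X_{n-1}, Y[1])$. The analogous dimension shift in $\Hcal_T$ gives $\Ext^n_{\Hcal_T}(X, Y) \cong \Ext^1_{\Hcal_T}(X_{n-1}, Y)$, and a direct comparison of the two long exact sequences attached to $0 \to X_n \to T^{(I_{n-1})} \to X_{n-1} \to 0$ identifies these two groups (the boundary map from the Hom-sequence matches the connecting map producing $\Ext^1$).

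For essential surjectivity: since $\real$ is t-exact, it sends $\D^\bdd(\Hcal_T)$ into $\Tcal^\bdd$. To identify $\Tcal^\bdd$ with the full bounded part of the silting t-structure, one uses that $T$ is a projective generator of $\Hcal_T$, so by a truncation argument the condition $\Hom_\Tcal(T, X[i]) = 0$ for almost all $i$ is equivalent to $H_T^i(X) = 0$ for almost all $i$. Any such $X$ is then a finite iterated extension of shifts of its cohomology objects via truncation triangles, hence lies in the essential image of $\real$. The main technical hurdle I anticipate is the careful construction of the realization functor from the derivator data; once this is in place, the decency hypothesis enters only through \cref{heartequiv}, and the remaining steps are standard manipulations of t-structures and projective resolutions, rendered available exactly by the tilting condition $T \in \Hcal_T$ and the projectivity of $T$ in $\Hcal_T$.
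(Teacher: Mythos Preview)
Your approach is correct and follows the same overall strategy as the paper: build the realization functor $\real\colon \D^\bdd(\Hcal_T)\to\Tcal$, show it is a triangle equivalence onto $\Tcal^\bdd$, and combine with the abelian equivalence $\Hcal_T\cong\Ctra\Sfr$ from \cref{heartequiv}.

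The difference is one of packaging. The paper's proof is a two-line citation: Modoi's theorem provides an f-enhancement of any compactly generated derivator, and then \cite[Theorem~3.11, Proposition~5.1]{PV20} directly yield that $\real$ exists, is fully faithful, and has essential image $\Tcal^\bdd$. You instead unpack the full-faithfulness step by hand via the dimension-shift/effaceability argument using projective resolutions in $\Hcal_T$ built from $\Add(T)$. This is entirely valid and is in fact the classical argument underlying those cited results; it makes the role of the tilting hypothesis (that $T$ is a projective generator of $\Hcal_T$ with $\Hom_\Tcal(T,\Hcal_T[k])=0$ for $k\neq 0$) completely transparent. The one place where you are a bit vague---deriving the f-enhancement from the derivator data---is exactly what Modoi's theorem supplies, so you would want to cite it rather than leave it as an anticipated ``technical hurdle''. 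With that reference in place, your argument is a correct and more self-contained variant of the paper's proof.
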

\begin{proof}
    By Modoi's result \cite[Theorem 7]{Modoi}, $\Tcal$ admits an f-enhancement in the sense of \cite{PV20}, see \cref{ss-derequivalence} for more context. It follows by \cite[Theorem 3.11, Proposition 5.1]{PV20} that the realization functor $\D^\bdd(\Hcal_T) \to \Tcal$ exists and it is fully faithful. As the essential image of this functor is precisely $\Tcal^\bdd$ by \cite[Theorem 3.11]{PV20} and $\Hcal_T \cong \Ctra \Sfr$ by \cref{ab-eq}, we are done.
\end{proof}

For the next part it will be important to use the fact that definable subcategories are characterized by their closure properties in case $\Tcal$ underlies a stable derivator. This was developed by Laking \cite{Lak20}. 

\begin{theorem}[\cite{Lak20}]\label{T:LV}
    Let $\Tcal$ be a triangulated category which is the base of a compactly generated derivator. Then:
    \begin{enumerate}
        \item A full subcategory $\Ccal$ of $\Tcal$ is definable if and only if it is closed under products, pure subobjects, and directed homotopy colimits.
        \item For any $Y \in \Tcal$, the definable closure $\Def_\Tcal(Y)$ consists precisely of pure subobjects of directed homotopy colimits of objects from $\Prod_\Tcal(Y)$.
    \end{enumerate}
\end{theorem}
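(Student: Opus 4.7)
The plan is to prove (1) and (2) simultaneously. The "only if" direction of (1) and the $\supseteq$ inclusion of (2) are formal: for any $f: A \to B$ in $\Tcal^\cpt$, the zero locus $\{X \in \Tcal \mid \Hom_\Tcal(f,X) = 0\}$ is closed under products (trivially), under directed homotopy colimits (since $A,B$ are compact, so $\Hom_\Tcal(A, \hocolim_i X_i) = \varinjlim_i \Hom_\Tcal(A, X_i)$ and likewise for $B$), and under pure subobjects (a pure monomorphism $Y \to X$ induces a monomorphism $\Hom_\Tcal(B,Y) \hookrightarrow \Hom_\Tcal(B,X)$ because $B$ is compact and $\yo Y \to \yo X$ is monic). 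Intersecting over any defining set $\Phi$ of compact maps preserves all three closure properties, proving the forward direction of (1). Since $\Def_\Tcal(Y)$ is definable and contains $Y$ (hence also $\Prod_\Tcal(Y)$, its directed homotopy colimits, and their pure subobjects), the inclusion $\supseteq$ of (2) follows.

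For the hard direction, the strategy is to let $\Wcal_Y \subseteq \Tcal$ denote the class of pure subobjects of directed homotopy colimits of objects from $\Prod_\Tcal(Y)$ and show that $\Wcal_Y$ is itself closed under products, pure subobjects, and directed homotopy colimits; combined with (1), this yields (2). Closure under pure subobjects and iterated directed homotopy colimits is essentially formal (a "reindexing" argument at the level of the derivator), while closure under products is the subtlest step, requiring one to rewrite a product of directed homotopy colimits as a single directed homotopy colimit of products indexed by the product poset. Granted this, to obtain (1) given $\Ccal$ closed under the three operations, I would put $\Phi = \{f \in \Tcal^\cpt \mid \Hom_\Tcal(f,X) = 0 \;\forall X \in \Ccal\}$ and the induced definable class $\Ccal'$, and show that for each $X \in \Ccal'$ one can embed $X$ purely into a directed homotopy colimit of products of objects drawn from $\Ccal$, placing $X$ in $\Ccal$.

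The technical heart of the embedding is the construction of homotopy coherent reduced products, which play the role of ultrapowers in model-theoretic treatments of purity. Given an indexing set $I$ and a filter $\Fcal$ on $I$, one assembles the partial products $\prod_{i \in J} X_i$ for $J \in \Fcal$ into a filtered diagram at the level of the derivator and takes its directed homotopy colimit. For $X \in \Ccal'$ and suitably fine filters, this produces a pure monomorphism from $X$ into such a reduced product of objects chosen from $\Ccal$, detected by the classical fact that the restricted Yoneda image of a homotopy coherent reduced product in $\Tcal$ agrees, up to pure monomorphism, with the ordinary reduced product of Yoneda images in $\Mod \Tcal^\cpt$. The well-developed abelian theory of definable subcategories of a locally finitely presented Grothendieck category (Crawley-Boevey, Krause) can then be transported back.

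The main obstacle is precisely this transport across the restricted Yoneda functor, which is conservative but neither full nor faithful. The Grothendieck derivator structure is the essential tool: it provides functorial homotopy colimits of filtered systems and coherence between products and such colimits. A secondary subtlety is the handling of pure-injective envelopes, which must be known to lie in the same definable closure as the original object (as recorded in \cite[Corollary 4.4]{AHMV17}); this provides the final bridge, allowing one to test membership in $\Ccal$ on pure-injective objects where the correspondence with injectives in $\Mod \Tcal^\cpt$ is sharp.
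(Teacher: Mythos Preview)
The paper does not give an independent proof of this theorem: its entire argument is the sentence ``Both the claims are proved in \cite[Theorem 3.11]{Lak20} and \cite[Corollary 3.12]{Lak20}.'' So there is no in-paper proof to compare your attempt against; the result is treated as a black box imported from Laking's work.

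That said, your sketch is a reasonable outline of the argument one actually finds in \cite{Lak20}: the forward direction of (1) and the inclusion $\supseteq$ of (2) are formal, and the substantive content lies in transporting the abelian theory of definable subcategories of $\Mod\Tcal^\cpt$ back through the restricted Yoneda functor, using homotopy coherent reduced products as the triangulated analogue of ultraproducts. One logical wrinkle in your exposition is worth flagging: you propose to deduce (2) from (1) by showing that $\Wcal_Y$ is closed under the three operations, but your argument for closure of $\Wcal_Y$ under products (``rewrite a product of directed homotopy colimits as a single directed homotopy colimit of products indexed by the product poset'') is not literally correct without further work, since products do not commute with directed homotopy colimits on the nose. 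In Laking's treatment this is circumvented: one proves (1) first via the reduced-product embedding and the abelian theory, and then (2) drops out as a corollary, without needing to verify the three closure properties for $\Wcal_Y$ directly. Since the paper only cites the result, none of this affects the comparison you were asked to make.
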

\begin{proof}
Both the claims are proved in \cite[Theorem 3.11]{Lak20} and \cite[Corollary 3.12]{Lak20}. 
\end{proof}
\begin{lemma}\label{def-tilting}
    Let $\Tcal$ be a triangulated category which is the base of a compactly generated derivator and $T \in \Tcal$ a pure-projective tilting object. Then $\Hcal_T$ is a definable subcategory of $\Tcal$. In particular, $T$ is decent.
\end{lemma}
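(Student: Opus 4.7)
The plan is to invoke Laking's characterization \cref{T:LV} and verify that $\Hcal_T$ is closed in $\Tcal$ under products, pure subobjects, and directed homotopy colimits. The unifying tool throughout is the identification $\Hom_\Tcal(T, Y) \cong \Hom_{\Mod \Tcal^\cpt}(\yo T, \yo Y)$, valid for every $Y \in \Tcal$: writing $T$ as a direct summand of a coproduct $\bigoplus_k C_k$ of compact objects (which is what pure-projectivity gives), this holds because $\yo$ preserves coproducts and is fully faithful on compact objects by the Yoneda lemma. Under this identification the relevant computations transport into the Grothendieck abelian category $\Mod \Tcal^\cpt$, where $\yo T$ is projective so that $\Hom_{\Mod \Tcal^\cpt}(\yo T, -)$ is exact.

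Closure under products is immediate. For pure subobjects, a pure triangle $Y \to X \to Z \to$ and each of its shifts are sent by $\yo$ to short exact sequences in $\Mod \Tcal^\cpt$; applying the exact functor $\Hom_{\Mod \Tcal^\cpt}(\yo T, -)$ together with the identification yields short exact sequences $0 \to \Hom_\Tcal(T, Y[i]) \to \Hom_\Tcal(T, X[i]) \to \Hom_\Tcal(T, Z[i]) \to 0$ for every $i$. When $X \in \Hcal_T$, vanishing of the middle term for $i \neq 0$ forces vanishing of the two outer terms, placing $Y$ and $Z$ in $\Hcal_T$.

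The main obstacle is the closure under directed homotopy colimits. The derivator structure furnishes a pure Milnor-type triangle $\bigoplus_j X_j \to \bigoplus_j X_j \to \hocolim_j X_j \to$, so by the same short-exactness argument the step reduces to closure under coproducts in $\Tcal$ -- a subtle question, since $\Hom_\Tcal(T, -)$ need not preserve coproducts when $T$ is not compact. I would resolve it by a monomorphism argument in the abelian setting: the identification gives $\Hom_\Tcal(T, \bigoplus_j X_j[i]) = \Hom_{\Mod \Tcal^\cpt}(\yo T, \bigoplus_j \yo(X_j[i]))$, and the canonical monomorphism $\bigoplus_j \yo(X_j[i]) \hookrightarrow \prod_j \yo(X_j[i])$ in the Grothendieck abelian category $\Mod \Tcal^\cpt$, preserved by $\Hom_{\Mod \Tcal^\cpt}(\yo T, -)$, yields an injection $\Hom_{\Mod \Tcal^\cpt}(\yo T, \bigoplus_j \yo(X_j[i])) \hookrightarrow \prod_j \Hom_\Tcal(T, X_j[i])$. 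For $i \neq 0$ and $X_j \in \Hcal_T$ the target vanishes, so $\bigoplus_j X_j \in \Hcal_T$. All three closure conditions being verified, Laking's criterion yields definability of $\Hcal_T$. The ``in particular'' assertion is immediate: since $T$ is tilting, $T \in \Hcal_T$, whence $\Def_\Tcal(T) \subseteq \Hcal_T$ by minimality of the definable closure -- precisely the decency condition.
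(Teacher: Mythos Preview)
Your approach is essentially the same as the paper's --- verify the closure properties needed for Laking's characterization (\cref{T:LV}) --- with only cosmetic differences: the paper treats $T\Perp{>0}$ and $T\Perp{<0}$ separately and defers the homotopy-colimit step to the argument of \cite[Lemma~4.5]{Lak20}, while you work with $\Hcal_T$ directly and spell things out.

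There is, however, a genuine gap in your treatment of directed homotopy colimits. The Milnor triangle $\bigoplus_j X_j \to \bigoplus_j X_j \to \hocolim_j X_j \to$ that you invoke exists only for \emph{sequential} ($\omega$-indexed) systems; for a general directed index category there is no triangle with both left-hand terms equal to the same coproduct $\bigoplus_j X_j$. The repair is immediate using ingredients you already have. The key derivator fact is that $\yo$ sends directed homotopy colimits to ordinary direct limits in $\Mod\Tcal^\cpt$ (equivalently, compact objects are homotopically finitely presented), so the canonical morphism $\bigoplus_j X_j \to \hocolim_j X_j$ is a pure epimorphism. Your short-exact-sequence argument already shows that $\Hcal_T$ is closed under pure \emph{quotients} as well as pure subobjects, and you have closure under coproducts via the pure monomorphism $\bigoplus \hookrightarrow \prod$; combining these places $\hocolim_j X_j$ in $\Hcal_T$ without any Milnor-type presentation. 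This is precisely what the paper's citation of \cite[Lemma~4.5]{Lak20} encodes.
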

\begin{proof}
    Since $T$ is pure-projective, both the subcategories $T\Perp{>0}$ and $T\Perp{<0}$ are closed under pure monomorphisms, pure epimorphisms, and products. It follows that both the subcategories are closed under directed homotopy colimits and so are definable, this follows by a similar argument as in the proof of \cite[Lemma 4.5]{Lak20}. Then their intersection $\Hcal_T$ is definable as well, and the condition $\Def(T) \subseteq \Hcal_T$ clearly holds, so that $T$ is decent by definition.
\end{proof}
An object $C$ of $\Tcal$ is \newterm{cosilting} if the pair $(\Perp{\leq 0}C,\Perp{> 0}C)$ constitutes a {{\tst}}structure in $\Tcal$. In order to adhere to the standard notation, it is convenient to consider the associated heart shifted by degree $-1$ so that the equality $\Hcal_C = \Perp{\neq 0}C$ holds. We denote the induced (again, shifted) cohomological functor as $H_C^0: \Tcal \to \Hcal_C$. The heart $\Hcal_C$ is an abelian category with exact coproducts such that $H_C^0(C)$ is an injective cogenerator (\cite[Lemma 2.7]{AHMV17}). If in addition $\Prod(C) \subseteq \Perp{<0}C$, or equivalently $\Prod(C) \subseteq \Hcal_C$, we call $C$ a \newterm{cotilting} object. Similarly to the silting situation, we say that two cosilting objects $C$ and $C'$ are \newterm{equivalent} if they induced the same {{\tst}}structure, which amounts to $\Prod(C) = \Prod(C')$.

\begin{lemma}\label{def-cotilting}
    Let $\Tcal$ be a triangulated category which is the base of a compactly generated derivator and $C \in \Tcal$ a pure-injective cotilting object. Then $\Def(C) \subseteq \Hcal_C$.
\end{lemma}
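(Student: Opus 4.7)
The plan is to dualize the characterization of the definable closure given by \cref{T:LV}(2): every object $X \in \Def(C)$ arises as a pure subobject of a directed homotopy colimit $Y = \hocolim_\alpha Y_\alpha$ with each $Y_\alpha \in \Prod(C)$. Since the condition $X \in \Hcal_C$ amounts to $\Hom_{\Tcal}(X,C[i]) = 0$ for all $i \neq 0$, the strategy is to verify this vanishing first at the level of the homotopy colimit $Y$, and then pull it back to the pure subobject $X$.

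For the first step, note that each $Y_\alpha$ lies in $\Hcal_C$ by the cotilting assumption $\Prod(C) \subseteq \Hcal_C$, so $\Hom_{\Tcal}(Y_\alpha,C[i]) = 0$ for $i \neq 0$. The shifts $C[i]$ are pure-injective, and so the functor $\Hom_\Tcal(-,C[i])$ factors as $\Hom_{\Mod \Tcal^\cpt}(\yo -,\yo C[i])$ by \cite[Theorem~1.8]{Kr00}. Since $\yo$ sends directed homotopy colimits to directed colimits in $\Mod \Tcal^\cpt$ (using that each compact object $F \in \Tcal^\cpt$ satisfies $\Hom_\Tcal(F,\hocolim Y_\alpha) \cong \varinjlim \Hom_\Tcal(F,Y_\alpha)$), and $\Hom_{\Mod \Tcal^\cpt}(-,\yo C[i])$ converts colimits to limits, one obtains
\[
\Hom_\Tcal(Y,C[i]) \;\cong\; \varprojlim_\alpha \Hom_\Tcal(Y_\alpha,C[i]) \;=\; 0
\]
for all $i \neq 0$, so $Y \in \Hcal_C$.

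For the second step, complete the pure monomorphism $X \to Y$ to a pure triangle $X \to Y \to Z \xrightarrow{+}$. Because $C[i]$ is pure-injective for every $i$, applying $\Hom_\Tcal(-,C[i])$ to a pure triangle yields a short exact sequence of abelian groups (this is how pure triangles interact with pure-injectives via $\yo$, analogous to the argument used in the proof of \cref{tstr-gen}). For $i \neq 0$ the middle term $\Hom_\Tcal(Y,C[i])$ vanishes by the previous step, hence $\Hom_\Tcal(X,C[i]) = 0$. This gives $X \in \Perp{\neq 0}C = \Hcal_C$, as required.

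The main technical obstacle, such as it is, lies in step two: one must know that pure triangles remain exact after applying $\Hom_\Tcal(-,E)$ for a pure-injective $E$, and in step one that the restricted Yoneda functor $\yo$ commutes with directed homotopy colimits in a compactly generated derivator. Both facts are already standard tools from Krause's purity theory and Laking's derivator techniques, and the argument is essentially the precise dual of the pure-projective tilting argument in \cref{def-tilting}, with coproducts replaced by products and pure epimorphisms replaced by pure monomorphisms.
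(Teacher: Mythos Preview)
Your proof is correct and follows essentially the same route as the paper: both arguments invoke \cref{T:LV}(2) to write an arbitrary $X \in \Def(C)$ as a pure subobject of a directed homotopy colimit of objects from $\Prod(C) \subseteq \Hcal_C$, and then check that $\Hcal_C$ is closed under directed homotopy colimits and pure subobjects. The only difference is cosmetic: the paper obtains closure under directed homotopy colimits by observing (via \cite[Lemma~4.5]{Lak20}) that such a colimit is a pure quotient of the coproduct, whereas you compute $\Hom_\Tcal(\hocolim Y_\alpha, C[i])$ directly by factoring through $\yo$ and using pure-injectivity of $C[i]$---both are standard and either suffices.
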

\begin{proof}
    Since $C$ is pure-injective, both the categories $\Perp{>0}C$ and $\Perp{<0}C$ are closed under pure monomorphisms, pure epimorphisms, and coproducts. It follows that their intersection $\Hcal_C$ is closed under pure monomorphisms and directed homotopy colimits (see the proof of \cite[Lemma 4.5]{Lak20}). Since $C$ is cotilting, we have $\Prod(C) \subseteq \Hcal_C$. Now it is enough to recall from \cref{T:LV}(2) that $\Def(C)$ consists precisely of all pure subobjects of directed homotopy colimits of objects in $\Prod(C)$. 
\end{proof}
\begin{rmk}
    Certain asymmetry is now apparent from \cref{def-tilting,def-cotilting}. First, unlike for pure-projective tilting objects, it is often not the case that the heart $\Hcal_C$ of a pure-injective cotilting object is a definable subcategory. 
    
    Secondly, one encounters far fewer pure-projective tilting objects in practice than the pure-injective cotilting counterparts. Of course, any compact tilting object is pure-projective. There exist pure-projective tilting complexes which are not equivalent to a compact one but these are rather exotic and hard to construct, see \cite{PPT}. On the other hand, any (bounded) cosilting complex in a derived category of a ring is known to be pure-injective \cite[Proposition 3.10]{MV18}. In fact, the author is not aware of any example of a cosilting object of a compactly generated triangulated category which is not pure-injective.

    In what follows, we will show that the condition of being decent for a tilting complex $T$ in a derived category of a ring holds much more generally then under the pure-projective assumption.
\end{rmk}
\section{Tilting complexes and hearts in the derived category of a ring}
If $\Acal$ is an abelian category, we let $\D(\Acal)$ denote the (unbounded) derived category and $\D^\bdd(\Acal)$ the bounded derived category of cochain complexes over $\Acal$. The abelian categories we consider never encounter any set-theoretic issues with the existence of their derived categories.

Unless said otherwise, $R$ will always denote an arbitrary (associative, unital) ring. An object $T \in \D(\Mod R)$ of the derived category of right $R$-modules is a (bounded) \newterm{silting complex}\footnote{Silting complexes not necessarily isomorphic to a bounded complex of projectives are also considered in the literature. In this paper, we focus on the bounded versions of silting and cosilting complexes.} if $T$ is a silting object which is in addition quasi-isomorphic to a bounded complex of right projective $R$-modules. A silting complex $T$ is called a \newterm{tilting complex} in case it is a tilting object in $\D(\Mod R)$. Finally, a silting complex is \newterm{decent} if it is decent as a silting object, that is, if $\Def(T) = \Def_{\D(\Mod R)}(T) \subseteq \Hcal_T$. We recall that any decent silting complex is automatically tilting. Note that since $T\Perp{>0}$ is always a definable subcategory of $\D(\Mod R)$ \cite[Theorem 3.6]{MV18}, a silting complex $T$ is decent if and only if $\Def(T) \subseteq T\Perp{<0}$.

Let $\lMod R$ denote the category of left $R$-modules. Dually to the previous paragraph, an object $C \in \D(\lMod R)$ of the derived category of left $R$-modules is a \newterm{cosilting complex} if $C$ is isomorphic in $\D(\lMod R)$ to a bounded complex of injective $R$-modules and it is a cosilting object in this triangulated category. Any (bounded) cosilting complex is pure-injective in $\D(\lMod R)$ and $\Hcal_C$ is a Grothendieck category with an injective cogenerator $H_C^0(C)$ \cite[Proposition 3.10]{MV18}, \cite[Lemma 2.7, Theorem 3.6]{AHMV17}. If $C$ is in addition a cotilting object in $\D(\lMod R)$, we call it a \newterm{cotilting complex}. Finally, we say that a cosilting complex is of \newterm{cofinite type} if the associated cosilting {{\tst}}structure is compactly generated (see \cite[\S 2.6]{AHH19} for details).

Consider the character duality functor $(-)^+ := \Hom_{\Zbb}(-,\Qbb/\Zbb)$ acting as a functor $(\Mod R)^\op \to \lMod R$. Since $(-)^+$ is exact, it easily extends to a functor $(-)^+: \D(\Mod R)^\op \to \D(\lMod R)$. Note that this is a conservative functor, a property we will utilize throughout the rest of the paper. We will also need another duality functor $(-)^* := \RHom_{R}(-,R)$ which induces an equivalence $(\D(\lMod R)^\cpt)^\op \toeq \D(\Mod R)^\cpt$ on the categories of compact objects. See \cite[\S 2.2]{AHH19} for details about these dualities, and also the recent preprint \cite{BW} where the triangulated character duality is developed in larger generality. By symmetry, we also freely consider both functors going in the opposite direction.

The cosilting complexes of cofinite type are known to be precisely the character duals of silting complexes, up to equivalence.
\begin{prop}\cite[Theorem 3.3]{AHH19}\label{silting-cosilting}
    If $T \in \D(\Mod R)$ is a silting complex then $T^+ \in \D(\lMod R)$ is a cosilting complex. This assignment yields a bijection between equivalence classes of silting complexes in $\D(\Mod R)$ and equivalence classes of cosilting complexes in $\D(\lMod R)$ of cofinite type.
\end{prop}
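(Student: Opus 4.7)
The plan is to exploit the derived tensor--hom adjunction $\RHom_{\lMod R}(X, T^+) \cong (T \otimes_R^{\mathbf{L}} X)^+$ for $X \in \D(\lMod R)$, valid since $T$ is quasi-isomorphic to a bounded complex of projective right $R$-modules and hence K-projective. Combined with the fact that $(-)^+$ is exact and conservative (as $\Qbb/\Zbb$ is an injective cogenerator of $\Mod \Zbb$), this identity translates the vanishing condition $\Hom_{\D(\lMod R)}(X, T^+[i]) = 0$ into the vanishing of $H^{-i}(T \otimes_R^{\mathbf{L}} X)$. This translation is the backbone of the entire argument.

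First I would verify that $T^+$ is represented by a bounded complex of injective left $R$-modules: apply $(-)^+$ componentwise to a bounded complex of projective right modules representing $T$ and use that the character dual of a projective right module is an (in fact pure-)injective left module. Next I would establish that $(\Perp{\leq 0} T^+, \Perp{>0} T^+)$ is a t-structure on $\D(\lMod R)$. Closure under (co)products and under suspension is formal from the adjunction. For the cotilting condition $\Prod(T^+) \subseteq \Perp{<0} T^+$ I would apply the adjunction to the silting inclusion $\Add(T) \subseteq T\Perp{>0}$: a product $(T^+)^I$ is a direct summand of $(T^{(I)})^+$, and $\Hom$-vanishing transfers accordingly after the degree flip introduced by $(-)^+$. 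The subtle step is the existence of approximation triangles; here one exploits the silting approximation triangles for $T$ in $\D(\Mod R)$ combined with the pure monomorphism $Y \hookrightarrow Y^{++}$ and the boundedness of $T$ to produce approximations for $T^+$.

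For the bijection, the assignment $T \mapsto T^+$ descends to equivalence classes since $\Add(T) = \Add(T')$ implies $\Prod(T^+) = \Prod((T')^+)$. Injectivity on equivalence classes follows because the silting t-structure $(T\Perp{>0}, T\Perp{\leq 0})$ is recoverable from that of $T^+$ via the adjunction identity, and silting equivalence is detected by the associated t-structure. For surjectivity onto cosilting complexes of cofinite type, given such a $C$, the cofinite type hypothesis provides a set $\Scal \subseteq \D(\lMod R)^\cpt$ whose associated coaisle coincides with $\Perp{>0} C$. The contravariant equivalence $(-)^*\colon (\D(\lMod R)^\cpt)^\op \toeq \D(\Mod R)^\cpt$ transports $\Scal$ to a set of compact objects in $\D(\Mod R)$, from which one assembles, by standard silting constructions, a silting complex $T$ with $T^+$ equivalent to $C$.

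The main obstacle I anticipate is the construction of approximation triangles for the putative cosilting t-structure: character duality is contravariant, so approximation triangles in $\D(\Mod R)$ do not directly descend to $\D(\lMod R)$, and a careful combination of the double-dual embedding, boundedness, and the structure of the silting t-structure is required. The surjectivity half of the bijection is similarly delicate, as it critically uses the cofinite type assumption to reverse-engineer the silting complex from the cosilting data; without this assumption a generic cosilting complex need not lie in the image of $(-)^+$.
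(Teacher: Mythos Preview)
The paper does not prove this proposition at all; it is stated with a bare citation to \cite[Theorem 3.3]{AHH19} and treated as an input to the subsequent results. So there is no ``paper's own proof'' to compare against, and your sketch should be read as an attempt to reconstruct the cited result.

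That said, your proposal contains a genuine conceptual error. You write ``For the cotilting condition $\Prod(T^+) \subseteq \Perp{<0} T^+$ I would apply the adjunction to the silting inclusion $\Add(T) \subseteq T\Perp{>0}$.'' But the proposition asserts only that $T^+$ is \emph{cosilting}, not cotilting; the inclusion $\Prod(T^+) \subseteq \Perp{<0}T^+$ is precisely the extra cotilting condition, and it is \emph{false} in general. Indeed, the entire point of \cref{condition} is that $T^+$ is cotilting if and only if $T$ is decent, and \cref{counterexample} exhibits a tilting complex $T$ for which $T^+$ is cosilting but not cotilting. Your proposed transfer argument cannot work: the silting condition $\Add(T) \subseteq T\Perp{>0}$ is automatic and carries no information beyond what is already in the t-structure axioms, so it cannot produce the genuinely stronger cotilting conclusion on the dual side.

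The remaining parts of your outline --- the adjunction identity $\Hom_{\D(\lMod R)}(X,T^+[i]) \cong H^{-i}(T \otimes_R^{\mathbf L} X)^+$, the observation that $T^+$ is a bounded complex of injectives, and the strategy for the bijection via $(-)^*$ on compacts --- are sound and in line with how such results are proved in the literature. But you should drop the cotilting step entirely and be careful that the approximation-triangle argument you sketch establishes only the cosilting t-structure.
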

The following lemma shows that applying the character duality induces a pair of definable categories which are in the usual terminology from the model theory of modules called \newterm{dual definable}.
\begin{lemma}\label{dual-definable}
    Let $Y \in \D(\Mod R)$. Then:
    \begin{enumerate}
        \item[(i)] For any $X \in \D(\Mod R)$ we have $X \in \Def_{\D(\Mod R)}(Y) \iff X^+ \in \Def_{\D(\lMod R)}(Y^+)$,
        \item[(ii)] For any $Z \in \D(\lMod R)$ we have $Z \in \Def_{\D(\lMod R)}(Y^+) \iff Z^+ \in \Def_{\D(\Mod R)}(Y)$.
    \end{enumerate}
\end{lemma}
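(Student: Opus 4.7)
The plan is to reduce both equivalences to a single natural isomorphism between Hom groups coming from the character duality of perfect complexes. Recall from the definition given above that $\Def_{\D(\Mod R)}(Y) = \{X \mid \Hom_{\D(\Mod R)}(f,X) = 0 \text{ for every } f \in \D(\Mod R)^\cpt \text{ with } \Hom(f,Y) = 0\}$, and analogously for $\Def_{\D(\lMod R)}(Y^+)$. Since $(-)^*$ is an anti-equivalence $(\D(\Mod R)^\cpt)^\op \toeq \D(\lMod R)^\cpt$ with $(-)^{**}$ naturally isomorphic to the identity on compact objects, every compact test morphism in $\D(\lMod R)$ is of the form $f^*$ for some $f$ in $\D(\Mod R)^\cpt$.

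The key technical ingredient will be a natural isomorphism, for $A \in \D(\Mod R)^\cpt$ and $X \in \D(\Mod R)$,
$$\RHom_R(A,X)^+ \;\cong\; (X \otimes_R^\mathbf{L} A^*)^+ \;\cong\; \RHom_R(A^*, X^+),$$
where the first step uses the compactness identity $\RHom_R(A,X) \cong X \otimes_R^\mathbf{L} A^*$ (valid because compact objects of $\D(\Mod R)$ are perfect) and the second is the tensor-hom adjunction. Passing to $H^0$, which commutes with $(-)^+$ since $\Qbb/\Zbb$ is injective, yields $\Hom_{\D(\Mod R)}(A, X)^+ \cong \Hom_{\D(\lMod R)}(A^*, X^+)$, natural in $A$ and $X$. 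Applied to a morphism $f\colon A \to B$ in $\D(\Mod R)^\cpt$, this produces an identification $\Hom(f,X)^+ \cong \Hom(f^*, X^+)$; and since $\Qbb/\Zbb$ is an injective cogenerator of $\Mod \Zbb$, a homomorphism of abelian groups vanishes iff its character dual does, so that
$$\Hom_{\D(\Mod R)}(f, X) = 0 \;\iff\; \Hom_{\D(\lMod R)}(f^*, X^+) = 0.$$

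Part (i) then follows by a direct chase of the characterization of $\Def$: the condition $X \in \Def(Y)$ says that $\Hom(f,X) = 0$ whenever $\Hom(f,Y) = 0$, which translates under the displayed equivalence (applied separately to $X$ and to $Y$) into the statement that $\Hom(f^*, X^+) = 0$ whenever $\Hom(f^*, Y^+) = 0$, i.e.\ $X^+ \in \Def(Y^+)$. Part (ii) is handled by the symmetric argument: the analogous isomorphism $\Hom_{\D(\lMod R)}(C, Z)^+ \cong \Hom_{\D(\Mod R)}(C^*, Z^+)$ for $C \in \D(\lMod R)^\cpt$, specialized to $C = f^*$ and combined with $f^{**} \cong f$, gives $\Hom(f^*, Z) = 0 \iff \Hom(f, Z^+) = 0$, from which $Z \in \Def(Y^+) \iff Z^+ \in \Def(Y)$ is immediate. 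The only delicate point is to confirm that the string of adjunctions and dualities is natural in the morphism $f$ rather than merely in the objects, but this is automatic from the naturality of tensor-hom adjunction and the exactness of $(-)^+$, so no input beyond the character-duality formalism already recalled from \cite{AHH19} is required.
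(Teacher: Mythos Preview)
Your proof is correct and follows essentially the same route as the paper's: both hinge on the equivalence $\Hom_{\D(\Mod R)}(f,X)=0 \iff \Hom_{\D(\lMod R)}(f^*,X^+)=0$ for compact $f$, then use that $(-)^*$ is an anti-equivalence on compacts to transport the defining set $\Phi$ for $\Def(Y)$ to the defining set $\Phi^*$ for $\Def(Y^+)$. The only difference is that the paper cites \cite[Lemma 2.3]{AHH19} for the key equivalence, whereas you spell it out via $\RHom_R(A,X)^+ \cong \RHom_R(A^*,X^+)$ and the cogenerator property of $\Qbb/\Zbb$.
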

\begin{proof}
    Let $\Phi$ be the set of all maps $f$ in $\D(\Mod R)^\cpt$ such that $\Hom_{\D(\Mod R)}(f,Y)$ is zero so that $\Def(Y) = \{X \in \D(\Mod R) \mid \Hom_{\D(\Mod R)}(f,X) \text{ is zero $~\forall f \in \Phi$}\}$. Put $\Phi^* = \{f^* \mid f \in \Phi\}$. We recall that $f^* = \RHom_{R}(f,R)$ and $\Phi^*$ is a set of maps between compact objects of $\D(\lMod R)$. For any map $f$ in $\D(\Mod R)^\cpt$ and any $X \in \D(\Mod R)$ we have an equivalence 
    $$\Hom_{\D(\Mod R)}(f,X) \text{ is zero}  \iff \Hom_{\D(\lMod R)}(f^*,X^+) \text{ is zero},$$ 
    see the proof of \cite[Lemma 2.3]{AHH19}. Plugging $X = Y$, we obtain that the set $\Phi^*$ of maps in $\D(\lMod R)^\cpt$ defines $\Def(Y^+)$, and then using it for general $X \in \D(\Mod R)$ we obtain $(i)$. Since $\Phi^{**}$ identifies with $\Phi$, we also obtain $(ii)$ by the same argument.
\end{proof}
\begin{lemma}\label{heartduality}
    Let $T$ be a silting complex in $\D(\Mod R)$ and let $C = T^+$ be the cosilting complex in $\D(\lMod R)$ corresponding to $T$ via \cref{silting-cosilting}. For any $X \in \D(\lMod R)$ we have $X \in \Hcal_C$ if and only if $X^+ \in \Hcal_T$.
\end{lemma}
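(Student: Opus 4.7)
The plan is to exhibit, for every $i \in \Zbb$, a natural isomorphism
\[
    \Hom_{\D(\Mod R)}(T, X^+[i]) \;\cong\; \Hom_{\D(\lMod R)}(X, T^+[i]).
\]
Granting this, the conclusion is immediate: by definition $\Hcal_T = T\Perp{\neq 0}$ and $\Hcal_C = \Perp{\neq 0}(T^+)$ are cut out by the vanishing of these two Hom groups, respectively, for all $i \neq 0$, so the isomorphism transports one vanishing condition into the other.

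The strategy for constructing this isomorphism is to identify both sides with the character dual of the same derived tensor product $T \otimes_R^{\mathbf{L}} X$. The silting hypothesis on $T$ allows me to represent it by a bounded complex of projective right $R$-modules, which has two useful consequences. First, $T$ is K-projective, so that $\RHom_R(T, X^+) = \Hom_R(T, X^+)$, and moreover $T \otimes_R^{\mathbf{L}} X = T \otimes_R X$ for any complex $X$. Second, each projective right $R$-module is flat, so by Lambek's lemma its character dual is an injective left $R$-module; thus $T^+$ is a bounded complex of injective left $R$-modules, hence K-injective, which gives $\RHom_R(X, T^+) = \Hom_R(X, T^+)$.

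With these reductions the classical $\Hom$--$\otimes$ adjunction over $\Zbb$, applied degreewise to the total complex $T \otimes_R X$ with the injective cogenerator $\Qbb/\Zbb$, yields a chain of natural isomorphisms of complexes of abelian groups
\[
    \Hom_R(T, X^+) \;\cong\; \Hom_{\Zbb}(T \otimes_R X, \Qbb/\Zbb) \;\cong\; \Hom_R(X, T^+),
\]
the common value being $(T \otimes_R^{\mathbf{L}} X)^+$. Passing to $H^i$ delivers the desired natural isomorphism of Hom groups and completes the argument.

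I do not anticipate any serious obstacle: the argument reduces to the standard compatibility between character duality, $\RHom$, and $\otimes^{\mathbf{L}}$, and the only input from silting theory is the existence of a bounded projective representative of $T$, which is built into the definition of a silting complex used in this section. The only routine bookkeeping concerns the shift conventions matching under the exact functor $(-)^+$, which is unproblematic.
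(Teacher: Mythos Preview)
Your proof is correct and follows essentially the same approach as the paper: both identify each of the two $\Hom$-groups with the character dual of the appropriate cohomology of $T \otimes_R^{\mathbf{L}} X$ via the $\otimes$--$\Hom$ adjunction, and then conclude from $\Hcal_T = T\Perp{\neq 0}$ and $\Hcal_C = \Perp{\neq 0}(T^+)$. You simply spell out the reduction to ordinary (non-derived) $\Hom$ and $\otimes$ via K-projectivity of $T$ and K-injectivity of $T^+$, whereas the paper states the adjunction isomorphisms directly at the derived level.
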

\begin{proof}
    Note that $\Hcal_T = T\Perp{\neq 0}$ and our convention also ensures that $\Hcal_C = \Perp{\neq 0}C$. Using $\otimes$-$\Hom$ adjunction, there are isomorphisms for any $X \in \D(\lMod R)$ and $i \in \Zbb$
    $$\Hom_{\D(\lMod R)}(X,T^+[i]) \cong H^0(T \otimes_R^\mathbf{L} X[-i])^+ \cong \Hom_{\D(\Mod R)}(T,X^+[i]),$$
    which is enough to conclude the proof.
\end{proof}
We continue with the main result of this section which provides a more intuitive and useful characterization of the decent property of a silting complex in $\D(\Mod R)$.
\begin{theorem}\label{condition}
    Let $T \in \D(\Mod R)$ be a silting complex and put $C = T^+$. The following are equivalent:
    \begin{enumerate}
        \item[(i)] $T$ is decent,
        \item[(ii)] the cosilting complex $C$ is cotilting.
    \end{enumerate}
\end{theorem}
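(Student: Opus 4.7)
My plan is to prove both implications by exploiting the character-duality results \cref{heartduality} and \cref{dual-definable} together with an iterated pure-injective-type resolution argument in $\D(\Mod R)$.

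For the easy direction (i) $\Rightarrow$ (ii), I will show $\Prod(C) \subseteq \Hcal_C$. Every $X \in \Prod(C)$ is a summand of $C^I \cong (T^{(I)})^+$ for some set $I$, so by closure of $\Hcal_C$ under summands and by \cref{heartduality} it suffices to verify $(T^{(I)})^{++} \in \Hcal_T$. Since $T^{(I)} \in \Add(T) \subseteq \Def(T)$, two successive applications of \cref{dual-definable} place $(T^{(I)})^{++}$ in $\Def(T)$, and the decent hypothesis $\Def(T) \subseteq \Hcal_T$ finishes the argument.

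For the converse (ii) $\Rightarrow$ (i), I want to establish $\Def(T) \subseteq \Hcal_T$. Since $\Def(T) \subseteq T\Perp{>0}$ is automatic, the task reduces to the vanishing of $\Hom_{\Tcal}(T, Y[i])$ for $Y \in \Def(T)$ and $i < 0$. The first step is to record that for any $Z \in \Def(T)$ one has $Z^{++} \in \Hcal_T$: by the remark before the theorem, $C$ is pure-injective, so \cref{def-cotilting} combined with (ii) gives $\Def(C) \subseteq \Hcal_C$; \cref{dual-definable} then yields $Z^+ \in \Def(C) \subseteq \Hcal_C$, and \cref{heartduality} applied to $Z^+$ delivers $Z^{++} \in \Hcal_T$. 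Next, using that the double-dual evaluation $Y \to Y^{++}$ is a pure monomorphism in $\D(\Mod R)$, I iteratively build pure triangles
\[
Y_n \to Y_n^{++} \to Y_{n+1} \to Y_n[1]
\]
starting from $Y_0 = Y$, where each $Y_n$ again lies in $\Def(T)$ (by closure under pure cokernels and under $(-)^{++}$ via \cref{dual-definable}) and each $Y_n^{++} \in \Hcal_T$ by the previous observation. The associated long exact sequences, combined with the vanishing of $\Hom_{\Tcal}(T, Y_n^{++}[i])$ for $i \neq 0$, produce isomorphisms $\Hom_{\Tcal}(T, Y[i]) \cong \Hom_{\Tcal}(T, Y_n[i-n])$ for all $i < 0$ and $n \geq 1$, since the intermediate shifts $i-k$ stay away from $0$ and $1$.

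The main obstacle I foresee is the final descent, because the naive attempt to conclude $Y \in \Hcal_T$ directly from $Y^{++} \in \Hcal_T$ fails: the pure monomorphism $Y \to Y^{++}$ is not a monomorphism in the triangulated sense, so $\Hom_{\Tcal}(T, Y[i]) \to \Hom_{\Tcal}(T, Y^{++}[i])$ need not be injective. My plan is to bypass this by exploiting that $\Def(T)$ consists of cohomologically bounded complexes, confined to the finite range $[L, U]$ where $H^\ast T$ lives. This follows from \cref{T:LV}(2), since products, directed homotopy colimits, and pure subobjects of $T$ all preserve cohomological support (the last point uses that evaluating the restricted Yoneda $\yo$ at the compact object $R[-n]$ computes $H^n$, so pure monomorphisms induce monomorphisms on cohomology). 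A standard complex-level computation, using a bounded-below K-injective resolution of any $X \in \D^{[L,U]}(\Mod R)$ and the fact that $T$ is a bounded complex of projectives, then shows that $\Hom_{\Tcal}(T, X[m]) = 0$ for $m$ sufficiently negative. Applying this to $X = Y_n$ with $m = i - n$ and taking $n$ large enough forces $\Hom_{\Tcal}(T, Y[i]) = 0$, yielding $Y \in \Hcal_T$ and completing the proof.
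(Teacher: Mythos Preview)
Your proof is correct and follows essentially the same strategy as the paper's. Both directions match closely: for $(i)\Rightarrow(ii)$ you and the paper argue identically via \cref{dual-definable} and \cref{heartduality}; for $(ii)\Rightarrow(i)$ both proofs first establish $Z^{++}\in\Hcal_T$ for $Z\in\Def(T)$, then run an iterated pure-triangle argument to shift the cohomological degree, and finally invoke uniform cohomological boundedness of $\Def(T)$ to terminate. The only cosmetic differences are that the paper uses the pure-injective envelope $X_i\to PE(X_i)$ where you use the biduality map $Y_n\to Y_n^{++}$, and the paper deduces cohomological boundedness of $\Def(T)$ directly from the observation that vanishing of $H^n$ is a definable condition (cut out by the identity on $R[-n]$), whereas you go through \cref{T:LV}(2); both routes are valid and equally short.
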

\begin{proof}
    $(i) \implies (ii)$: Since $\Def_{\D(\Mod R)}(T)$ is closed under taking the double character dual $(-)^{++}$ (this follows e.g. by \cref{dual-definable}), condition $(i)$ ensures that for any $X \in \Add(T)$ we have $X^{++} \in \Hcal_T$. In turn, we have $X^+ \in \Hcal_C$ by \cref{heartduality}. This already shows that $C^\varkappa \in \Hcal_C$ for any cardinal $\varkappa$. Since $\Hcal_C$ is closed under direct summands, we have $\Prod(C) \subseteq \Hcal_C$ as desired.

    $(ii) \implies (i)$: By \cref{dual-definable}, the definable closures of $T$ and $C = T^+$ satisfy that $X \in \Def_{\D(\Mod R)}(T)$ if and only if $X^+ \in \Def_{\D(\lMod R)}(C)$. By \cref{heartduality,def-cotilting}, we have for any $X \in \Def_{\D(\Mod R)}(T)$ that $X^{++} \in \Hcal_T$. As a consequence, any pure-injective object from $\Def_{\D(\Mod R)}(T)$ belongs to $\Hcal_T$, see \cite[Corollary 2.8]{AHH19}.

    Now let $X \in \Def_{\D(\Mod R)}(T)$ be any object. As recalled above, $T\Perp{>0}$ is definable, and so it is enough to show $X \in T\Perp{<0}$ to demonstrate that $X \in \Hcal_T$. Inductively, we can construct a sequence of triangles $X_i \to P_i \to X_{i+1} \to X_i[1]$ indexed by $i \geq 0$ with the following properties: The map $X_i \to P_i$ is the pure-injective envelope for all $i \geq 0$ and $X_0 = X$. It follows that $X_i, P_i \in \Def_{\D(\Mod R)}(T)$ for all $i \geq 0$. By the previous paragraph, $P_i \in \Hcal_T$ for all $i \geq 0$. Therefore, we have for any $l>0$ an isomorphism $\Hom_{\D(\Mod R)}(T,X_0[-l]) \cong \Hom_{\D(\Mod R)}(T,X_i[-l-i])$. 
    
    For any set of integers $I$, the full subcategory of $\D(\Mod R)$ determined by vanishing of cohomology in degrees outside of $I$ is easily seen to be definable by the set of identity maps $\Phi = \{R[-i] \xrightarrow{=} R[-i] \mid i \in \Zbb \setminus I\}$. Therefore, since $T$ is a cohomologically bounded object, there are integers $a<b$ such that any object from $\Def_{\D(\Mod R)}(T)$ has cohomology vanishing outside of degrees in the interval $[a,b]$. But then $\Hom_{\D(\Mod R)}(T,X_i[-l-i])$ has to vanish whenever $i \geq 0$ is chosen large enough. Using the above isomorphism, we infer that $\Hom_{\D(\Mod R)}(T,X_0[-l]) = 0$ for all $l>0$, meaning that $X_0 = X \in T\Perp{<0}$ as desired.
\end{proof}
The following is a direct consequence of \cref{condition}. We will show in \cref{counterexample} that the converse implication is not true in general.
\begin{cor}
    Let $T$ be a silting complex in $\D(\Mod R)$ such that $T^+$ is a cotilting complex in $\D(\lMod R)$. Then $T$ is tilting.
\end{cor}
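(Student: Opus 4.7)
The plan is to deduce this corollary directly from \cref{condition} combined with the remark recorded right after the definition of decent silting objects, namely that every decent silting complex is automatically tilting because $\Add(T) \subseteq \Def(T) \subseteq \Hcal_T$.

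Concretely, I would argue as follows. By hypothesis, $T^+$ is a cotilting complex in $\D(\lMod R)$, which is condition (ii) of \cref{condition}. Hence the implication $(ii) \Rightarrow (i)$ of \cref{condition} yields that $T$ is decent, i.e.\ $\Def_{\D(\Mod R)}(T) \subseteq \Hcal_T$. Since trivially $\Add(T) \subseteq \Def_{\D(\Mod R)}(T)$, we conclude $\Add(T) \subseteq \Hcal_T \subseteq T\Perp{\neq 0}$, and in particular $\Add(T) \subseteq T\Perp{<0}$. Combined with the silting assumption $\Add(T) \subseteq T\Perp{>0}$, this says exactly that $T$ is a tilting complex in the sense defined in the silting/tilting subsection.

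There is essentially no obstacle here; the work has already been done in the proof of \cref{condition}, and the corollary is just the observation that being decent is a strengthening of being tilting. The only point worth mentioning is that one should invoke \cref{condition} in the direction $(ii) \Rightarrow (i)$ rather than try to verify the decent condition from scratch, since the latter would require essentially repeating the pure-injective envelope sequence argument used there.
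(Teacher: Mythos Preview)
Your proposal is correct and follows exactly the approach the paper intends: the paper states this corollary as a direct consequence of \cref{condition}, using that decent silting complexes are automatically tilting (as noted immediately after the definition). There is nothing to add.
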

\begin{cor}\label{cofinite}
    The assignment $T \mapsto T^+$ of \cref{silting-cosilting} restricts to a bijection
    $$\left \{ \begin{tabular}{ccc} \text{Decent tilting complexes} \\ \text{in $\D(\Mod R)$} \\ \text{up to equivalence} \end{tabular}\right \}  \stackrel{1-1}{\longleftrightarrow}  \left \{ \begin{tabular}{ccc}  \text{Cotilting complexes of cofinite type} \\ \text{in $\D(\lMod R)$} \\ \text{up to equivalence} \end{tabular}\right \}.$$
\end{cor}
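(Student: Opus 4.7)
The proof is essentially a formal combination of \cref{silting-cosilting} and \cref{condition}, so the plan is short. By \cref{silting-cosilting}, the character duality $T \mapsto T^+$ already provides a bijection between equivalence classes of silting complexes in $\D(\Mod R)$ and equivalence classes of cosilting complexes of cofinite type in $\D(\lMod R)$. Thus it suffices to check that this bijection restricts to one between the decent tilting complexes on the left-hand side and the cotilting complexes of cofinite type on the right-hand side; in other words, that $T$ is a decent tilting complex if and only if $T^+$ is a cotilting complex (which is automatically of cofinite type, as it is the character dual of a silting complex).

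First I would observe that any decent silting complex is tilting, as already noted in the remark following the definition of a decent silting object (since $\Add(T) \subseteq \Def(T) \subseteq \Hcal_T$). Consequently, the word ``tilting'' in the left-hand set is redundant: decent silting complexes are exactly the decent tilting complexes. Next I would invoke \cref{condition} directly: this theorem asserts that a silting complex $T \in \D(\Mod R)$ is decent if and only if the cosilting complex $T^+ \in \D(\lMod R)$ is cotilting. This is exactly the compatibility needed to restrict the bijection of \cref{silting-cosilting}.

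To assemble the argument: given a decent tilting complex $T$, \cref{condition} guarantees that $T^+$ is a cotilting complex, and by \cref{silting-cosilting} it is automatically of cofinite type. Conversely, given a cotilting complex $C$ in $\D(\lMod R)$ of cofinite type, \cref{silting-cosilting} produces a silting complex $T$, unique up to equivalence, with $T^+$ equivalent to $C$; since $T^+$ is cotilting, \cref{condition} forces $T$ to be decent (hence tilting). Equivalence classes are respected because the ambient bijection of \cref{silting-cosilting} already respects them. Since no further verification is needed, there is no real obstacle here; the corollary is a clean bookkeeping consequence of the two previous results.
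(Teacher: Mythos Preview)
Your proposal is correct and follows exactly the same approach as the paper: both simply observe that the bijection of \cref{silting-cosilting} restricts as claimed because, by \cref{condition}, a silting complex $T$ is decent (hence tilting) if and only if $T^+$ is cotilting. The paper's proof is just a one-line version of what you wrote.
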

\begin{proof}
    Follows directly from \cref{condition}, as a silting complex $T$ is decent (and thus in particular, tilting) if and only if $T^+$ is a cotilting complex. 
\end{proof}
\begin{cor}\label{noeth}
    Let $R$ be a left hereditary or a commutative noetherian ring. The assignment $T \mapsto T^+$ of \cref{silting-cosilting} restricts to a bijection
    $$\left \{ \begin{tabular}{ccc} \text{Decent tilting complexes} \\ \text{in $\D(\Mod R)$} \\ \text{up to equivalence} \end{tabular}\right \}  \stackrel{1-1}{\longleftrightarrow}  \left \{ \begin{tabular}{ccc}  \text{Cotilting complexes} \\ \text{in $\D(\lMod R)$} \\ \text{up to equivalence} \end{tabular}\right \}.$$
\end{cor}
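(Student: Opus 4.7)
The plan is to reduce this corollary to the preceding \cref{cofinite}, which already provides the bijection $T \mapsto T^+$ between equivalence classes of decent tilting complexes in $\D(\Mod R)$ and equivalence classes of cotilting complexes of cofinite type in $\D(\lMod R)$. Thus, in order to enlarge the right-hand side from ``cofinite type'' to ``all cotilting,'' it suffices to prove that in each of the two settings every cotilting complex in $\D(\lMod R)$ is automatically of cofinite type.

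For $R$ commutative noetherian, I would appeal to the classification of cotilting complexes over commutative noetherian rings. Such complexes are parametrized by suitable combinatorial data on $\Spec R$ (slice sp-filtrations, or equivalently Cousin-style codimension functions), and the same combinatorial data also classifies the compactly generated t-structures on $\D(\Mod R)$. As a consequence, the cosilting t-structure attached to any cotilting complex is compactly generated, i.e.\ the complex is of cofinite type. This is essentially the same classification picture that underlies the Section 7 applications of the paper, so no new machinery is required beyond citing the relevant result.

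For $R$ left hereditary, the plan is to reduce to the 1-cosilting case. Over a hereditary ring, every bounded complex in $\D(\lMod R)$ is quasi-isomorphic to the direct sum of its shifted cohomologies, and t-structures in the bounded derived category of a hereditary abelian category arise as HRS tilts of torsion pairs in $\lMod R$. Combining this splitting with the pure-injectivity of any bounded cosilting complex (\cite[Proposition 3.10]{MV18}) and with the orthogonality relations defining a cosilting object, I would show that every bounded cosilting complex $C$ is equivalent, up to a shift, to a 1-cosilting module. The result then follows from the Bazzoni--\v{S}\v{t}ov\'\i\v{c}ek theorem that every tilting module is of finite type, which, via the character duality built into \cref{silting-cosilting}, translates into the statement that every 1-cotilting module is of cofinite type.

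The main obstacle is the reduction step in the hereditary case: the commutative noetherian case is essentially an invocation of a known classification, whereas the hereditary case requires verifying that the natural splitting of bounded complexes over a hereditary ring is compatible with the cosilting structure, so that a general bounded cotilting complex is equivalent to a shifted cotilting module. Once this reduction is in place, Bazzoni--\v{S}\v{t}ov\'\i\v{c}ek closes out the argument and combining with \cref{cofinite} yields the claimed bijection.
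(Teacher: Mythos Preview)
Your overall strategy—reducing to \cref{cofinite} by showing that in either setting every cotilting complex in $\D(\lMod R)$ is of cofinite type—matches the paper exactly. For the commutative noetherian case your appeal to the classification is essentially the paper's argument (the paper cites \cite[Corollary 2.14]{HN21}).

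In the hereditary case there is a genuine gap in your final step. You invoke the Bazzoni--\v{S}\v{t}ov\'\i\v{c}ek theorem that every $1$-tilting module is of finite type and claim that the character duality of \cref{silting-cosilting} ``translates'' this into the statement that every $1$-cotilting module is of cofinite type. But \cref{silting-cosilting} only asserts that $T \mapsto T^+$ is a bijection from silting complexes onto cosilting complexes \emph{of cofinite type}; it says nothing about whether this map surjects onto \emph{all} cotilting modules, which is precisely the statement you need. The finite-type theorem constrains the source of the character map, not its image, and in fact there exist rings admitting $1$-cotilting modules that are not of cofinite type, so no purely formal deduction of this kind can succeed in general. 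The result you actually need—that over a left hereditary ring every cosilting complex is of cofinite type—is the content of \cite[Theorem 3.11]{AHH19}, which the paper simply cites. Your reduction of a bounded cotilting complex to a shifted $1$-cotilting module is a reasonable first step toward reproving that theorem, but the last inference does not follow from the ingredients you name.
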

\begin{proof}
    In both the situations, any cosilting complex is automatically of cofinite type by \cite[Theorem 3.11]{AHH19} or \cite[Corollary 2.14]{HN21}, see also \cite[Proposition 3.10]{MV18}.
\end{proof}
\begin{ex}
    Let $M \in \Mod R$. Then $M$ is a silting complex in $\D(\Mod R)$ if and only if $M$ is an $n$-tilting module for some $n \geq 0$ (see e.g. \cite[\S 2]{PS21} for the definition). Then any such $n$-tilting module is well-known to satisfy condition $(ii)$ of \cref{condition}. Together with \cref{def-tilting}, we see that decent tilting complexes generalize both $n$-tilting modules and compact tilting complexes.
\end{ex}
\subsection{Examples over commutative rings} In this subsection, we discuss some sources of non-trivial examples of decent tilting complexes, as well as one non-example, which come from commutative algebra, where classification results for (co)silting complexes are available (\cite{AH17}, \cite{Hrb20}).

A silting complex is \newterm{2-term} if it is isomorphic in $\D(\Mod R)$ to a complex of projectives concentrated in degrees 0 and -1. Following \cite{AH17}, if $R$ is a commutative ring then 2-term silting complexes $T$ in $\D(\Mod R)$ correspond up to equivalence to hereditary torsion pairs $(\Tcal_\Gcal,\Fcal_\Gcal)$ of finite type in $\Mod R$, which in turn correspond to Gabriel filters $\Gcal$ of finite type. Over a commutative ring, a Gabriel filter of finite type is a linear topology of open ideals of $R$ with a base of finitely generated ideals closed under ideal products, see \cite[Lemma 2.3]{Hone}. Then $\Tcal_\Gcal = \{M \in \Mod R \mid \Ann(m) \in \Gcal ~\forall m \in M\}$, where $\Ann(m)$ is the annihilator ideal of $m$. Furthermore, given such torsion pair, the tilting heart $\Hcal_T$ can be described just using cohomology by
$$X \in \Hcal_T \iff H^i(X) \begin{cases} \in \Ecal_\Gcal & i = -1 \\ \in \Dcal_\Gcal & i = 0 \\  = 0 & \text{else} \end{cases},$$ 
where $\Dcal_\Gcal = \{M \in \Mod R \mid M = IM ~\forall I \in \Gcal\}$ is the torsion class of all $\Gcal$-divisible modules and $\Ecal_\Gcal = \Dcal_\Gcal\Perp{0}$ is the corresponding torsion-free class of $\Gcal$-reduced modules, i.e. modules with no non-zero $\Gcal$-divisible submodule. In other words, $\Hcal_T$ is obtained as the Happel-Reiten-Smal\o~ tilt with respect to the torsion pair $(\Dcal_\Gcal,\Ecal_\Gcal)$ in $\Mod R$.

\begin{prop}
    Let $R$ be commutative noetherian ring. Then any 2-term silting complex $T \in \D(\Mod R)$ is decent tilting.
\end{prop}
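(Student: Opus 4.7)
The plan is to reduce the claim to the example immediately preceding this subsection, which via \cref{condition} shows that every $n$-tilting module is decent. Specifically, I would argue that over a commutative noetherian ring any 2-term silting complex $T$ is equivalent, as a silting object in $\D(\Mod R)$, to a $1$-tilting module; since decency depends only on the induced silting t-structure (as noted just after the definition of decent silting), this suffices.

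For the reduction I would use the description of $\Hcal_T$ given just above the proposition: $T$ corresponds, up to silting-equivalence, to a Gabriel filter $\Gcal$ of finite type on $R$, and $\Hcal_T$ is the Happel-Reiten-Smal\o~tilt of the torsion pair $(\Dcal_\Gcal,\Ecal_\Gcal)$ in $\Mod R$. The noetherian hypothesis now enters through the Bazzoni-Herbera classification of $1$-tilting classes: over a commutative noetherian ring, every Gabriel filter of finite type is realized by a $1$-tilting module $T_\Gcal \in \Mod R$ whose tilting torsion pair $(\Gen T_\Gcal,\ T_\Gcal\Perp{0})$ coincides with $(\Dcal_\Gcal,\Ecal_\Gcal)$. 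Since both $T$ and $T_\Gcal$ therefore induce the same HRS-tilted t-structure in $\D(\Mod R)$, they are equivalent silting objects. As $T_\Gcal$ is $1$-tilting it is decent, and by silting-invariance so is $T$; in particular $T$ is tilting, finishing the proof.

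The main step requiring care is the invocation of the $1$-tilting classification to realize $(\Dcal_\Gcal,\Ecal_\Gcal)$ as a tilting torsion pair. It is here, and only here, that commutative noetherianness enters essentially; over more general commutative rings the classification is more delicate (involving faithfulness conditions on the Gabriel filter and a possible gap between 2-term silting complexes and $1$-tilting modules), which is why the noetherian hypothesis appears in the statement.
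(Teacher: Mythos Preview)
Your reduction contains a genuine gap: it is not true that every 2-term silting complex over a commutative noetherian ring is silting-equivalent to a 1-tilting module. The classification of 1-tilting classes over such rings shows that $\Dcal_\Gcal$ is a 1-tilting class precisely when the Gabriel filter $\Gcal$ is faithful, i.e.\ when $t_\Gcal(R) = 0$, equivalently when the associated specialization-closed subset avoids $\Ass R$. But 2-term silting complexes correspond to \emph{all} specialization-closed subsets. For a concrete failure, take $R = k[x,y]/(xy)$ and $\Gcal$ generated by $(x)$; the corresponding silting complex $T = R[x^{-1}] \oplus \Cone(R \to R[x^{-1}])$ has $H^{-1}(T) = yR \neq 0$ and $H^0(T) \neq 0$, so $T$ is not quasi-isomorphic to a module and therefore cannot lie in $\Add(M)$ for any $M \in \Mod R$. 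Your argument does go through when $R$ is a domain, which may be the situation you had in mind.

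The paper's proof avoids this obstacle by passing to the cotilting side: via \cref{condition}, decency of $T$ is equivalent to $T^+$ being cotilting, and the paper cites \cite[Corollary 5.12]{PV20} for the fact that every 2-term cosilting complex over a commutative noetherian ring is cotilting. One can also see this directly from \cref{2term}(2): since $R$ is noetherian, $t_\Gcal(R)$ is finitely generated and hence automatically $\Gcal$-bounded.
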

\begin{proof}
    This follows from \cite[Corollary 5.12]{PV20}, which proves that every 2-term cosilting complex in $\D(\Mod R)$ is cotilting, in combination with \cref{condition}. Here, a cosilting complex is 2-term if it is isomorphic to a complex of injective modules concentrated in degrees 0 and 1, so that $T^+$ is a 2-term cosilting complex.
\end{proof}
\begin{rmk}
    Much more is proved in \cite[Theorem 6.16, see also Corollary 6.17]{PV20}: If $R$ is commutative noetherian, any intermediate and compactly generated {{\tst}}structure in $\D(\Mod R)$ which restricts to a {{\tst}}structure in the bounded derived category $\D^\bdd(\mod R)$ of finitely generated modules is induced by a cotilting complex. Such {{\tst}}structures are abundant (see \cite[\S 5, \S 6]{ATJLS10}), and by \cref{noeth} each of them gives rise to a decent tilting complex.

    Another source of decent tilting complexes over commutative noetherian rings is provided in \cite[\S 6, \S 7]{HNS}. There, it is shown that any codimension function $\mathsf{d}: \Spec R \to \Zbb$ on the Zariski spectrum of a commutative noetherian ring $R$ of finite Krull dimension gives rise to a silting complex $T_\mathsf{d}$ in the derived category. If $R$ is in addition a homomorphic image of a Cohen-Macaulay ring, then $T_\mathsf{d}$ is in fact tilting and its character dual is cotilting. Unless $R$ is itself Cohen-Macaulay, $T_\mathsf{d}$ is not quasi-isomorphic to a stalk complex, \cite[Remark 5.10]{HNS}.
\end{rmk}
Denote by $t_\Gcal: \Mod R \to \Mod R$ the torsion radical corresponding to the hereditary torsion class $\Tcal_\Gcal$. We say that a $\Gcal$-torsion module $M \in \Tcal_\Gcal$ is \newterm{$\Gcal$-bounded} if there is $I \in \Gcal$ such that $IM = 0$.
\begin{lemma}\label{2term}
    Let $R$ be a commutative ring and $T$ be a 2-term silting complex in $\D(\Mod R)$ corresponding to a Gabriel filter $\Gcal$. Then:
    \begin{enumerate}
        \item If $t_\Gcal(R)$ is $\Gcal$-reduced then $T$ is a tilting complex.
        \item $t_\Gcal(R)$ is $\Gcal$-bounded if and only if $T^+$ is a cotilting complex.
    \end{enumerate}
\end{lemma}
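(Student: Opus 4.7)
The plan proceeds from the classification of 2-term silting complexes over a commutative ring via Gabriel filters of finite type \cite{AH17}. Up to equivalence, I fix a representative of the form $T = (P^{-1} \xrightarrow{d} P^0)$ concentrated in degrees $-1, 0$ with $P^{-1}$ a free $R$-module and $P^0$ projective, arranged so that $H^{-1}(T) = \Ker d$ identifies with $t_\Gcal(P^{-1})$ and $H^0(T) = \Coker d$ lies in $\Dcal_\Gcal$. With the cohomological description of $\Hcal_T$ recalled above, $T$ is a tilting complex if and only if $T \in \Hcal_T$, which by additive closure of the heart amounts to $\Ker d \in \Ecal_\Gcal$ (the divisibility of $H^0(T)$ being automatic from the construction).

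For (1), since $P^{-1}$ is free and the hereditary torsion radical $t_\Gcal$ commutes with coproducts, $t_\Gcal(P^{-1})$ is a direct sum of copies of $t_\Gcal(R)$. The torsion-free class $\Ecal_\Gcal$ is closed under direct sums, so the hypothesis yields $\Ker d \in \Ecal_\Gcal$ and hence $T \in \Hcal_T$.

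For (2), by \cref{condition} the statement $T^+$ is cotilting is equivalent to $T$ being decent, i.e., $\Def(T) \subseteq \Hcal_T$. Note first that a $\Gcal$-bounded torsion module is automatically $\Gcal$-reduced (any divisible submodule is annihilated by the bounding ideal, hence zero), so the hypothesis of (2) already implies the hypothesis of (1). For direction $(\Leftarrow)$, suppose $I_0 \cdot t_\Gcal(R) = 0$ for some $I_0 \in \Gcal$. Using \cref{T:LV}(2), it suffices to show $\Prod(T) \subseteq \Hcal_T$ together with closure of $\Hcal_T$ under directed homotopy colimits of such products and under pure subobjects. For any set $X$, $H^{-1}(T^X) = (\Ker d)^X$ is $I_0$-annihilated and hence $\Gcal$-reduced, while $H^0(T^X) = (\Coker d)^X$ is $\Gcal$-divisible since the finite generation of ideals in $\Gcal$ (the equation $M = a_1 M + \cdots + a_s M$ transfers to products) makes $\Dcal_\Gcal$ closed under products. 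Filtered colimits preserve both the $I_0$-annihilation and (again by finite generation) $\Gcal$-divisibility, so $\Hcal_T$ is closed under the relevant hocolims. Pure triangles in $\D(\Mod R)$ induce $\Mod R$-pure exact cohomology sequences, so combining with the sub-closure of $\Ecal_\Gcal$ and the pure-sub-closure of $\Dcal_\Gcal$ (the latter by tensoring a pure embedding with $R/I$ for $I = (a_1, \dots, a_s) \in \Gcal$ finitely generated) gives pure-subobject closure of $\Hcal_T$, completing $\Def(T) \subseteq \Hcal_T$.

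The direction $(\Rightarrow)$ is the main obstacle. Arguing contrapositively, assume $t_\Gcal(R)$ is not $\Gcal$-bounded, so $I \cdot t_\Gcal(R) \neq 0$ for every $I \in \Gcal$. The strategy is to produce an object in $\Def(T) \setminus \Hcal_T$ by exhibiting a nonzero $\Gcal$-divisible submodule inside the $H^{-1}$-cohomology of a suitable element of the definable closure. Concretely, one chooses a descending sequence $I_1 \supseteq I_2 \supseteq \cdots$ of finitely generated ideals in $\Gcal$ together with elements $a_n \in I_n t_\Gcal(R) \setminus \{0\}$, and assembles these compatible towers in the character bidual $(t_\Gcal(R)^{(X)})^{++}$, which by \cref{heartduality} equals $H^{-1}(T^{(X)++})$ and controls the membership $T^{(X)+} \in \Hcal_{T^+}$ required for $T^+$ to be cotilting. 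The resulting nonzero $\Gcal$-divisible element contradicts $H^{-1}(T^{(X)++}) \in \Ecal_\Gcal$, forcing $T^+$ to fail cotilting. This completes the contrapositive.
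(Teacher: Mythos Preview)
Your approach differs substantially from the paper's, which is short and citation-driven. For (1), the paper constructs an exact sequence $0 \to t_\Gcal(R) \to R \to D_0 \to D_1 \to 0$ with $D_i \in \Dcal_\Gcal$ (using that the induced Gabriel filter on $R/t_\Gcal(R)$ is faithful, so divisible pre-envelopes are monic) and then invokes \cite[Theorem~A]{CHZ}. For (2), it quotes the criterion of \cite[Theorem~5.6, Corollary~5.2]{PV20} that $T^+$ is cotilting if and only if the trace ideal $J = \trace_{R/t_\Gcal(R)} R$ lies in $\Gcal$; since always $J \cdot t_\Gcal(R) = 0$, and any $I \in \Gcal$ with $I \cdot t_\Gcal(R) = 0$ satisfies $I \subseteq J$, both directions of the equivalence follow in two lines.

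Your direct route has genuine gaps. The representative you ``arrange'' with $\Ker d = t_\Gcal(P^{-1})$ is asserted, not constructed; the natural explicit models (such as $Q \oplus \Cone(R \to Q)$) are not complexes of projectives, and after projectively resolving, the identification of $\Ker d$ with a coproduct of copies of $t_\Gcal(R)$ requires an argument you do not give. More seriously, your $(\Rightarrow)$ direction in (2) is a strategy rather than a proof: saying one ``assembles compatible towers'' of elements $a_n \in I_n t_\Gcal(R)$ inside a double character dual and obtains ``the resulting nonzero $\Gcal$-divisible element'' skips the entire construction and both the nonvanishing and the divisibility verifications. This is exactly the hard step---compare \cref{counterexample}, where even in a concrete ring producing a non-reduced element requires a carefully chosen direct system---and the paper bypasses it completely via the trace-ideal characterization. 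Finally, the claim that pure triangles in $\D(\Mod R)$ induce $\Mod R$-pure short exact sequences on each $H^i$ is not justified: testing against perfect complexes yields degreewise short exactness of cohomology, but $\Mod R$-purity requires testing against all finitely presented modules, which need not be perfect.
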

\begin{proof}
    We claim that there is an exact sequence
    $$0 \to t_\Gcal(R) \to R \to D_0 \to D_1 \to 0$$
    where $D_0, D_1 \in \Dcal_\Gcal$. As $\Gcal$-divisible modules are closed under epimorphic images, it is enough to see that $R/t_\Gcal(R)$ admits a monomorphism $R/t_\Gcal(R) \to D_0$ to a $\Gcal$-divisible module. This is the case because the induced Gabriel filter $\overline{\Gcal}=\{(I + t_\Gcal(R))/t_\Gcal(R) \mid I \in \Gcal\}$ over the quotient ring $R/t_\Gcal(R)$ is faithful, and so the $\overline{\Gcal}$-divisible pre-envelopes over this quotient ring come from a 1-tilting cotorsion pair, and thus are monomorphic, see \cite[\S 2, Theorem 3.16]{Hone} for details. Then $T$ is tilting by \cite[Theorem A]{CHZ} (cf. \cref{ss-derequivalence}).

    Now we prove the second statement. By \cite[Theorem 5.6]{PV20} in combination with \cite[Corollary 5.2]{PV20}, $T^+$ is cotilting if and only if $J \in \Gcal$, where $J = \trace_{R/t_\Gcal(R)}R$ is the trace ideal of the cyclic module $R/t_\Gcal(R)$. If the $\Gcal$-torsion in $R$ is bounded there is $I \in \Gcal$ such that $I t_\Gcal(R) = 0$. This implies $I \subseteq J$ and so $J \in \Gcal$. On the other hand, clearly $J t_\Gcal(R) = 0$, and so $J \in \Gcal$ implies that the $\Gcal$-torsion of $R$ is bounded.
\end{proof}

We are ready to provide an example of an indecent tilting complex.
\begin{ex}\label{counterexample}
    There is a commutative ring $R$ and a 2-term tilting complex $T$ in $\D(\Mod R)$ such the cosilting complex $T^+$ is not cotilting. Furthermore, we show that in this example the natural map $T^{(\omega)} \to T^\omega$ is not a monomorphism in $\Hcal_T$ (the coproduct and product of copies of $T$ coincides whether computed in $\Hcal_T$ or $\D(\Mod R)$). As a consequence $\Hcal_T$ cannot be equivalent to a category of contramodules over any complete and separated topological ring, see \cite[last paragraph of \S 6.2]{PS21}. 

    The following construction comes from \cite[Example 2.6]{P16}. Let $k$ be a field and $R$ be the commutative $k$-algebra generated by the infinite sequence $x_1,x_2,x_3,\ldots$ and another generator $y$ subject to relations $x_i x_j = 0$ and $y^ix_i = 0$ for all $i,j > 0$. Consider the Gabriel filter $\Gcal$ generated by the principal ideal $(y)$, meaning that it has a filter base formed by all ideals of the form $(y^k)$ for $k > 0$. Note that by the construction, the $\Gcal$-torsion radical $t_\Gcal(R)$ of $R$ is isomorphic to a direct sum $\bigoplus_{n > 0}(x_n)$ of cyclic ideals generated by $x_n$'s, and furthermore, each $(x_n)$ is isomorphic to a module of the form $k[y]/(y^n)$ with the obvious $R$-action in which $x_n$'s act as zero. Then it is straightforward to check that $t_\Gcal(R)$ is not $\Gcal$-bounded, but it is $\Gcal$-reduced, so the induced 2-term silting complex $T$ is tilting, but $T^+$ is not cotilting by \cref{2term}.

    Furthermore, let $f: R \to R[y^{-1}]$ be the localization map. Recall from \cite[Proposition 1.3]{AMVtau} and \cite[Example 4.14(5)]{AHH19} that we can chose $T = R[y^{-1}] \oplus \Cone(f)$ (see also \cite[Proposition 5.15]{AHH19}). We claim that the map $g: T^{(\omega)} \to T^\omega$ is not a monomorphism in $\Hcal_T$, or equivalently, that $\Cone(g) \not \in \Hcal_T$. Clearly, $H^{-1}\Cone(g) \cong \Coker(H^{-1}g)$. Furthermore, $H^{-1}g$ is just the map $t_{\Gcal}(R)^{(\omega)} \to t_{\Gcal}(R)^\omega$. and so $\Coker(H^{-1}g) \cong t_{\Gcal}(R)^\omega/t_{\Gcal}(R)^{(\omega)}$. Since $t_{\Gcal}(R)^\omega/t_{\Gcal}(R)^{(\omega)}$ is precisely the $\omega$-reduced product, a direct limit of any $\omega$-shaped diagram of copies of $t_{\Gcal}(R)$ can be embedded into it \cite[Theorem 3.3.2]{Prest}.

    It remains to show that there is an $\omega$-shaped diagram of copies of $t_{\Gcal}(R)$ whose direct limit is not $\Gcal$-reduced. As $t_\Gcal(R) \cong \bigoplus_{n > 0}(x_n) \cong k[y]/(y^n)$, there is a monic endomorphism $h$ of $t_\Gcal(R)$ which sends $x_n$ to $y x_{n+1}$ for each $n>0$. The direct limit of the system $t_\Gcal(R) \xrightarrow{h} t_\Gcal(R) \xrightarrow{h} t_\Gcal(R) \xrightarrow{h} t_\Gcal(R) \xrightarrow{h} \cdots$ is not $\Gcal$-reduced because multiplication by $y$ acts surjectively on the direct limit (and it is non-zero).
\end{ex}

\section{Good tilting complexes}
Let $T$ be a silting complex in $\D(\Mod R)$ and let $\Sfr = \End_{\D(\Mod R)}(T)$ be the endomorphism ring which we again consider as a topological ring by endowing it with the compact topology. We also let $A = \dgEnd_R(T) \cong \RHom_R(T,T)$ be the endomorphism dg-ring of $T$; this is a weakly non-positive dg-ring as $H^i(A) = \Hom_{\D(\Mod R)}(T,T[i]) = 0$ for $i>0$. We have the identification $\Sfr = H^0(A)$ and the weak non-positivity ensures that there is the standard {{\tst}}structure $(\D^{\leq 0},\D^{>0})$ in the derived category of right dg-modules $\D(\dgMod A)$ defined by vanishing of cohomology: $\D^{\leq 0} = \{X \in \D(\dgMod A) \mid H^i(X) = 0 ~\forall i>0\}$ and $\D^{> 0} = \{X \in \D(\dgMod A) \mid H^i(X) = 0 ~\forall i \leq 0\}$. We denote by $\tau^{\leq 0}$ and $\tau^{>0}$ the associated \newterm{smart truncation} functors, as well as their obvious shifted variants such as $\tau^{\geq 0}$ (see e.g. \cite[7.3.6, 7.3.10]{Yek}). The heart of this {{\tst}}structure is identified with $\Mod \Sfr$ via the zig-zag of dg-ring morphisms $\Sfr \leftarrow \tau^{\leq 0} A \xrightarrow{qis} A$, see e.g. \cite[Remark 1.6]{BM}.

The following definition was introduced for 1-tilting modules by \cite{Bazz10}, and then generalized to $n$-tilting modules by \cite{BMT11}. Our version for silting complexes is akin to the condition used in a more general dg setting by Nicolás and Saorín \cite{NS18}. Recall that a full triangulated subcategory of a triangulated category is \newterm{thick} if it is closed under direct summands, and we denote the thick closure operator as $\thick(-)$.
\begin{dfn}
    A silting complex $T \in \D(\Mod R)$ is \newterm{good} provided that $R \in \thick(T)$.
\end{dfn}
The assumption of being good is satisfied for any compact silting object. For non-compact ones being good is not automatic, but it is still a rather mild assumption thanks to the following well-known observation, which we reprove here in our setting.
\begin{lemma}\label{good}
    For any silting complex $T$ there is a good silting complex $T'$ equivalent to $T$.
\end{lemma}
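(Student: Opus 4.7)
The strategy is to construct an $\Add(T)$-coresolution of $R$ in $\D(\Mod R)$ of finite length and then set $T' := T \oplus T_0 \oplus \cdots \oplus T_{k-1}$ where $T_0, \ldots, T_{k-1}$ are the terms appearing in this coresolution. Since every added summand lies in $\Add(T)$, the resulting $T'$ will automatically be a silting complex equivalent to $T$, while the coresolution will exhibit $R \in \thick(T')$.

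To begin, I would observe that since $T$ is quasi-isomorphic to a bounded complex of projective right $R$-modules, say concentrated in cohomological degrees $[-n, 0]$, the group $\Hom_{\D(\Mod R)}(T, R[i]) = H^i \Hom_R(T, R)$ vanishes for $i \notin [0, n]$. Combined with the fact that $P = H^0_T(T)$ is a projective generator of $\Hcal_T$ (\cite[Lemma 2.7]{AHMV17}) and with non-degeneracy of the silting t-structure (indeed $T\Perp{\Zbb} \subseteq T\Perp{>0} \cap T\Perp{\leq 0} = 0$), this yields a bound on the silting cohomological amplitude of $R$: there exist integers $a \leq b$ with $H^i_T(R) = 0$ for $i \notin [a, b]$.

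Next I would construct inductively triangles
$$R = R_0 \to T_0 \to R_1 \to R_0[1], \quad R_1 \to T_1 \to R_2 \to R_1[1], \ \ldots$$
with $T_i \in \Add(T)$ as follows. Let $b_i$ be the largest integer with $H^{b_i}_T(R_i) \neq 0$. Pick an epimorphism $\pi_i \colon P^{(I_i)} \twoheadrightarrow H^{b_i}_T(R_i)$ in $\Hcal_T$ and lift it through the truncation $R_i \to H^{b_i}_T(R_i)[-b_i]$ to a morphism $\alpha_i \colon T_i := T^{(I_i)}[-b_i] \to R_i$ in $\D(\Mod R)$; the obstruction lies in $\Hom_{\D(\Mod R)}(T^{(I_i)}[-b_i], \tau^{<b_i}_T R_i[1])$, which vanishes since $\tau^{<b_i}_T R_i[1]$ sits in sufficiently negative silting degrees whereas $T^{(I_i)}[-b_i]$ has top silting cohomology exactly in degree $b_i$. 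The long exact silting cohomology sequence of the resulting triangle, together with $H^{>0}_T(T) = 0$ (from $T \in \Ucal$) and $H^0_T(T^{(I_i)}[-b_i])[b_i] = P^{(I_i)}$, then shows $H^j_T(R_{i+1}) = 0$ for all $j \geq b_i$, so the top silting degree strictly decreases. After at most $b - a + 1$ steps the residual $R_k$ has trivial silting cohomology, whence $R_k = 0$ by non-degeneracy.

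Setting $T' := T \oplus T_0 \oplus \cdots \oplus T_{k-1}$, each summand lies in $\Add(T)$, so $T' \in \Add(T)$; conversely $T$ is a direct summand of $T'$, so $T \in \Add(T')$ and thus $\Add(T) = \Add(T')$. Hence $T'$ is a silting complex equivalent to $T$. Splicing the constructed triangles exhibits $R$ as an iterated extension of $T_0, \ldots, T_{k-1}$, yielding $R \in \thick(T')$, so $T'$ is good. The main technical obstacle is controlling the $0$-th silting cohomology of a coproduct $T^{(I_i)}$: since the forgetful $\Hcal_T \hookrightarrow \Tcal$ need not preserve coproducts, one must verify that $H^{b_i}_T(\alpha_i)$ really matches the chosen epimorphism $\pi_i$. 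This is handled by applying $H^{b_i}_T$ directly to the lift, using that $H^0_T$ commutes with coproducts (as a t-structure truncation), so that $H^{b_i}_T(T^{(I_i)}[-b_i]) = P^{(I_i)}$ in $\Hcal_T$ regardless of how the coproduct $T^{(I_i)}$ is computed in $\Tcal$.
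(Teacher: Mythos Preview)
Your overall strategy---find finitely many objects $T_0,\ldots,T_{k-1}\in\Add(T)$ with $R\in\thick(T_0,\ldots,T_{k-1})$ and set $T'=T\oplus\bigoplus T_i$---is exactly what the paper does. The paper, however, simply \emph{cites} the fact $R\in\thick(\Add(T))$ from \cite[Proposition~5.3]{AH19} and then extracts the finitely many $T_i$; it does not reconstruct the coresolution.

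Your attempt to produce the coresolution explicitly has a genuine gap in the termination argument. First a minor inconsistency: you write the triangles as $R_i\to T_i\to R_{i+1}\to R_i[1]$ but construct $\alpha_i\colon T_i\to R_i$, so the triangle should read $T_i\xrightarrow{\alpha_i}R_i\to R_{i+1}\to T_i[1]$. More seriously, your claim that the process stops after $b-a+1$ steps is unjustified: you correctly show the \emph{top} silting degree strictly decreases, but you tacitly assume the bottom degree stays $\geq a$. It does not. Even when $T$ is tilting (so $T^{(I_i)}[-b_i]$ has silting cohomology only in degree $b_i$), once $b_i=a_i$ the cone $R_{i+1}$ acquires $H^{a_i-1}_T(R_{i+1})\cong\ker\pi_i$, which is typically nonzero. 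From that point on your construction is just the projective resolution of an object of $\Hcal_T$, and there is no reason for this to be finite---$\Hcal_T$ need not have finite global dimension. For a merely silting (non-tilting) $T$ the situation is worse, since $T^{(I_i)}[-b_i]$ itself has silting cohomology spread over an interval.

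The usual proof of $R\in\thick(\Add(T))$ (as in Wei or Angeleri~H\"ugel--Koenig--Liu) instead exploits the \emph{intermediateness} of the silting $t$-structure together with bounds on \emph{standard} cohomology to force the iterated $\Add(T)$-approximation to terminate; your purely silting-cohomological bookkeeping does not capture this. So either invoke the literature as the paper does, or replace the termination argument by one that controls the standard cohomological amplitude.
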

\begin{proof}
    Since $T$ is a silting complex we have $R \in \thick(\Add(T))$, see \cite[Proposition 5.3]{AH19}. In particular, there are $T_0,\ldots,T_{n-1} \in \Add(T)$ such that $R \in \thick(T_0,\ldots,T_{n-1})$. Put $T' = T \oplus \bigoplus_{i=0}^{n-1}T_i$, then clearly $R \in \thick(T')$ and $\Add(T) = \Add(T')$. The last equality implies that $T'$ is a silting complex and that it is equivalent to $T$ as such.
\end{proof}
We record some adjunction formulas for dg-(bi)modules, well-known to experts, for further use.
\begin{lemma}\label{dgmorphisms}
    Let $A$ and $B$ be dg-rings. 
    
    There is an evaluation morphism 
    $$\gamma_{X,Y,Z}: \RHom_A(X,Y) \otimes_B^\mathbf{L} Z \to \RHom_A(X,Y \otimes_B^\mathbf{L} Z)$$
    which is natural in $X \in \D(\dglMod A), Y \in \D(\lMod{(A \otimes_{\Zbb}B^{\op})}), Z \in \D(\dglMod B)$.

    There is an evaluation morphism 
    $$\delta_{X,Y,Z}: \RHom_A(Y,X) \otimes_B^\mathbf{L} Z \to \RHom_A(\RHom_B(Z,Y),X)$$
    natural in $Y \in \D(\Mod{(A\otimes_{\Zbb}B^{\op})}), X \in \D(\dgMod A), Z \in \D(\dglMod B)$.

    If $Z$ is a compact object in $\D(\dglMod B)$ then both $\gamma_{X,Y,Z}$ and $\delta_{X,Y,Z}$ are isomorphisms (in $\D(\Mod \Zbb)$).
\end{lemma}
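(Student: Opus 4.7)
The plan is to construct the two evaluation morphisms $\gamma$ and $\delta$ first, and then deduce the compactness isomorphism claim by a thick-subcategory argument in the variable $Z$.

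For $\gamma$, I would use the derived adjunction $(X \otimes_A^\mathbf{L} -) \dashv \RHom_A(X,-)$ and define $\gamma_{X,Y,Z}$ as the adjunct of the composition
$$X \otimes_A^\mathbf{L} \RHom_A(X,Y) \otimes_B^\mathbf{L} Z \xrightarrow{\mathrm{ev}_X \otimes \mathrm{id}_Z} Y \otimes_B^\mathbf{L} Z,$$
where $\mathrm{ev}_X \colon X \otimes_A^\mathbf{L} \RHom_A(X,Y) \to Y$ is the counit. Naturality in all three variables then follows from the naturality of the counit and of the adjunction bijection. Concretely, after choosing a K-projective resolution of $X$ over $A$ and a K-flat resolution of $Z$ over $B$, the morphism is represented by the classical evaluation $f \otimes z \mapsto (x \mapsto f(x) \otimes z)$.

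The construction of $\delta$ is dual: at the cochain level it is given on representatives by $(f \otimes z)(g) = f(g(z))$ for $f \in \Hom_A(Y,X)$, $z \in Z$, $g \in \Hom_B(Z,Y)$. More conceptually, one can obtain $\delta$ by applying $\RHom_A(-,X)$ to the $B$-counit $\RHom_B(Z,Y) \otimes_B^\mathbf{L} Z \to Y$ and then using the tensor-hom adjunction $(-\otimes_B^\mathbf{L} Z) \dashv \RHom_B(Z,-)$ in the first argument of $\RHom_A$ to convert $\RHom_A(Y,X) \otimes_B^\mathbf{L} Z$ into the desired target.

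For the isomorphism statement, fix $X$ and $Y$ and let $\Ecal_\gamma \subseteq \D(\dglMod B)$ be the full subcategory of those $Z$ for which $\gamma_{X,Y,Z}$ is an isomorphism; define $\Ecal_\delta$ analogously. Both sides of $\gamma$ and of $\delta$ are triangulated functors in $Z$ that preserve retracts, so $\Ecal_\gamma$ and $\Ecal_\delta$ are thick triangulated subcategories of $\D(\dglMod B)$. A direct inspection shows that when $Z = B$ both sides of $\gamma_{X,Y,B}$ canonically identify with $\RHom_A(X,Y)$ and the map is the identity; similarly, both sides of $\delta_{X,Y,B}$ identify with $\RHom_A(Y,X)$ via the canonical isomorphism $\RHom_B(B,Y) \cong Y$, and the map is again the identity. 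Hence $B \in \Ecal_\gamma \cap \Ecal_\delta$, so by thickness both subcategories contain $\thick(B) = \D(\dglMod B)^{\cpt}$, which is exactly the compactness claim.

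The main technical nuisance is not the thick-subcategory step (which is formal) but rather producing $\gamma$ and $\delta$ as genuinely natural transformations of triangulated functors across all three variables simultaneously; this is most cleanly handled by working with functorial K-projective/K-flat resolutions or inside a model-categorical/$\infty$-categorical enhancement of $\D(\dgMod{-})$, which the paper is implicitly assuming by its references to Yekutieli and related sources.
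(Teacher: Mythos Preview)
Your proposal is correct and follows essentially the same route as the paper: the paper simply cites Yekutieli for $\gamma$ and \cite{BM} for $\delta$, and then gives exactly your thick-subcategory argument in $Z$ (check $Z=B$, conclude for all of $\thick(B)=\D(\dglMod B)^\cpt$). One small quibble: your phrase ``the $B$-counit $\RHom_B(Z,Y)\otimes_B^{\mathbf L} Z\to Y$'' is not literally well-formed, since $\RHom_B(Z,Y)$ carries no natural $B$-module structure in this setup---but your explicit cochain-level formula $(f\otimes z)(g)=f(g(z))$ is correct and is all that is needed.
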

\begin{proof}
    For the first morphism we refer to \cite[Theorem 12.9.10,Theorem 14.1.22]{Yek}. The existence of the second morphism is covered e.g. in \cite[Lemma 1.3]{BM}. The fact that $\delta_{X,Y,Z}$ is an isomorphism if $Z$ is compact follows by a standard argument. Indeed, this map is easily checked to be an isomorphism if $Z = B$, and therefore it is an isomorphism also if $Z \in \thick_{\D(\dglMod B)}(B) = \D(\dglMod B)^\cpt$.
\end{proof}
The following results are available in \cite{NS18} (see also \cite{BM}), but we gather the relevant parts here in a form directly applicable for our purposes.
\begin{theorem}\label{fully-faithful}
    Let $T \in \D(\Mod R)$ be a good silting complex. Then:
    \begin{enumerate} 
        \item[(i)] $T$ is compact as an object of $\D(\dglMod A)$,
        \item[(ii)] the canonical morphism $R \to \RHom_A(T,T)$ is a quasi-isomorphism.
        \item[(iii)] both the functors $\RHom_R(T,-):\D(\Mod R) \to \D(\dgMod A)$ and $T \otimes_R^\mathbf{L} -: \D(\lMod R) \to \D(\dglMod A)$ are fully faithful.
        \item[(iv)] the essential images from both the functors from $(iii)$ are thick subcategories. 
    \end{enumerate}
\end{theorem}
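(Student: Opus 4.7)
The plan is to exploit $T$ viewed as an $(A, R)$-bimodule through three functors: the contravariant triangulated functor $\Psi = \RHom_R(-, T): \D(\Mod R)^\op \to \D(\dglMod A)$, sending $T \mapsto A$ (with its standard left $A$-module structure) and $R \mapsto T$; and the adjoint pairs $(- \otimes_A^\mathbf{L} T, \RHom_R(T, -))$ and $(T \otimes_R^\mathbf{L} -, \RHom_A(T, -))$ running between the relevant derived categories of modules and dg-modules. The goodness hypothesis $R \in \thick_{\D(\Mod R)}(T)$ is the key input driving each of the four statements.

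For \emph{(i)}, the contravariant triangulated functor $\Psi$ preserves thick closures, so $\Psi(\thick_{\D(\Mod R)}(T)) \subseteq \thick_{\D(\dglMod A)}(A) = \D(\dglMod A)^\cpt$. Since $R \in \thick_{\D(\Mod R)}(T)$ by goodness, $T = \Psi(R) \in \D(\dglMod A)^\cpt$. For \emph{(ii)}, I would verify that the natural comparison $\psi_{X,Y}: \RHom_R(X, Y) \to \RHom_A(\Psi(Y), \Psi(X))$ is a tautological isomorphism when $X = Y = T$ (both sides equal $A$), and extend by triangulated induction in each variable to all $X, Y \in \thick_{\D(\Mod R)}(T)$. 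Specialization to $X = Y = R$ identifies the canonical map $R \cong \RHom_R(R, R) \to \RHom_A(T, T)$ as an isomorphism.

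For \emph{(iii)}, the statement on $T \otimes_R^\mathbf{L} -$ is almost immediate from (i) and (ii): by (i) the right adjoint $\RHom_A(T, -)$ preserves coproducts, so the subcategory of $\D(\lMod R)$ on which the adjunction unit is an isomorphism is localizing; by (ii) it contains $R$, a compact generator of $\D(\lMod R)$, so the subcategory is all of $\D(\lMod R)$. The statement on $\RHom_R(T, -)$ is subtler, since this functor does not preserve coproducts for a non-compact silting $T$, making a direct generation argument unavailable. I would proceed through the compact dual $T^\vee := \RHom_R(T, R)$, which lies in $\D(\dgMod A)^\cpt$ by applying the covariant triangulated functor $\RHom_R(T, -)$ to $R \in \thick_{\D(\Mod R)}(T)$, and which satisfies $\RHom_A(T^\vee, T^\vee) \cong R$ by the enrichment argument analogous to (ii). A Keller-type dg Morita theorem then produces a triangle equivalence between the localizing subcategory of $\D(\dgMod A)$ generated by $T^\vee$ and $\D(\Mod R)$, which together with the already-established enrichment on $\thick(T)$ bootstraps to fully-faithfulness of $\RHom_R(T, -)$; alternatively, the same conclusion is available directly from \cite{NS18} (see also \cite{BM}).

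Finally, \emph{(iv)} is formal: a fully faithful triangulated functor from a Karoubian triangulated category has essential image closed under direct summands, since idempotents in the image lift through full faithfulness and split in the Karoubian source, and closure under shifts and mapping cones is automatic; hence the essential images are thick. The principal obstacle is the fully-faithfulness of $\RHom_R(T, -)$ in (iii): unlike $T \otimes_R^\mathbf{L} -$, this functor fails to preserve coproducts in general, forcing the detour through the compact dual $T^\vee$ rather than a straightforward generation argument.
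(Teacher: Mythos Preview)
Your proposal is correct and closely parallels the paper's proof. Parts (i) and (ii) match essentially verbatim: the paper applies $\RHom_R(-,T)$ to the thick-closure witnesses for (i), and for (ii) checks that the adjunction unit $\alpha_X: X \to \RHom_A(\RHom_R(X,T),T)$ is an isomorphism at $X=T$ and extends to $X=R$ by thick closure---your comparison map $\psi_{X,Y}$ is a two-variable reformulation of the same idea.

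For (iii), your treatment of $T\otimes_R^\mathbf{L}-$ via the localizing-subcategory argument is equivalent to but slightly less direct than the paper's: the paper invokes the evaluation morphism $\gamma_{T,T,X}$ of \cref{dgmorphisms}, which is an isomorphism for \emph{every} $X$ at once by compactness of $T$ in $\D(\dglMod A)$, so the unit factors as $X \cong R\otimes_R^\mathbf{L} X \cong \RHom_A(T,T)\otimes_R^\mathbf{L} X \xrightarrow{\gamma} \RHom_A(T,T\otimes_R^\mathbf{L} X)$ with no induction needed. For $\RHom_R(T,-)$, the paper simply cites \cite{NS18} and \cite{BM}, as you also suggest; your proposed detour through $T^\vee=\RHom_R(T,R)$ and a Keller-type argument is plausible but the ``bootstrap'' step from an equivalence $\mathrm{Loc}(T^\vee)\simeq \D(\Mod R)$ to full faithfulness of $\RHom_R(T,-)$ would still require identifying $\RHom_A(T^\vee,-)$ with $-\otimes_A^\mathbf{L} T$ on that localizing subcategory, which you leave implicit. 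Since the citation suffices, this is not a gap.

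For (iv), the paper argues via the existence of adjoints (a fully faithful functor with an adjoint has essential image equal to the kernel of a Bousfield (co)localization, hence thick), whereas you use Karoubianness of the source. Both are valid one-line arguments.
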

\begin{proof}
    $(i):$ There is a finite sequence of triangles witnessing that $R \in \thick_{\D(\Mod R)}(T)$, and by applying $\RHom_R(-,T)$ on these triangles we obtain a sequence of triangles verifying that $T \in \thick_{\D(\dglMod A)}(A) = \D(\dglMod A)^\cpt$.
    
    $(ii):$ Follows as in \cite[Theorem 1.4]{BM}. Indeed, consider the unit morphism $\alpha_X: X \to \RHom_A(\RHom_R(X,T),T)$ of the adjoint pair $\RHom_R(-,T): \D(\Mod R)^{\op} \begin{array}{c} \rightarrow \\[-6pt] \leftarrow \end{array} \D(\dglMod A)^{\op} :\RHom_A(-,T)$. Clearly, 
    $$\alpha_T: T \to \RHom_A(\RHom_R(T,T),T) \cong \RHom_A(A,T) \cong T$$ 
    is an isomorphism. But since $R \in \thick_{\D(\Mod R)} T$, also 
    $$\alpha_R: R \to \RHom_A(\RHom_R(R,T),T) \cong \RHom_A(T,T)$$ 
    is an isomorphism, and this is easily checked to coincide with the canonical morphism.

    $(iii):$ That $\RHom_R(T,-)$ is fully faithful follows from \cite[Theorem 6.4]{NS18} or also \cite[Theorem 1.4]{BM}. The second statement follows similarly and is also contained in \cite[Theorem 6.4]{NS18}, but we sketch it here for convenience. It is enough to show that the unit morphism $X \to \RHom_A(T,T \otimes_R^\mathbf{L}X)$ is an isomorphism for any $X \in \D(\lMod R)$. By \cref{dgmorphisms}, the natural morphism $\gamma_{X,T,T}: \RHom_A(T,T) \otimes_R^\mathbf{L} X \to \RHom_A(T,T \otimes_R^\mathbf{L}X)$ is an isomorphism. Then the above unit morphism factorizes as
    $$X \toeq R \otimes_R^\mathbf{L}X \toeq \RHom_A(T,T) \otimes_R^\mathbf{L} X \xrightarrow{\gamma_{X,T,T}} \RHom_A(T,T \otimes_R^\mathbf{L}X),$$
    and therefore it is an isomorphism.

    $(iv)$: The functors in question are fully faithful by $(iii)$ and they admit a left or right adjoint, respectively, which yields the claim.
\end{proof}
\begin{cor}\label{ff-ctra}
    Let $T \in \D(\Mod R)$ be a good and decent tilting object. Then the forgetful functor $\Ctra \Sfr \to \Mod \Sfr$ is fully faithful.
\end{cor}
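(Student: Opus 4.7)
The plan is to embed the tilting heart $\Hcal_T$ into $\Mod \Sfr$ via the derived Hom functor (which is fully faithful by goodness) and identify this embedding with the composition of the equivalence from \cref{heartequiv} with the forgetful functor. Then fully faithfulness of the composite forces fully faithfulness of the forgetful functor.

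First, I would observe that for any $X \in \Hcal_T = T\Perp{\neq 0}$, the complex $\RHom_R(T,X) \in \D(\dgMod A)$ has $H^i\RHom_R(T,X) = \Hom_{\D(\Mod R)}(T,X[i]) = 0$ for all $i \neq 0$. Hence $\RHom_R(T,X)$ lies in the heart of the standard {{\tst}}structure $(\D^{\leq 0},\D^{>0})$ on $\D(\dgMod A)$, which is identified with $\Mod \Sfr$ via the zig-zag $\Sfr \leftarrow \tau^{\leq 0}A \xrightarrow{qis} A$ and sends $\RHom_R(T,X)$ to the $\Sfr$-module $H^0\RHom_R(T,X) = \Hom_{\D(\Mod R)}(T,X)$. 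In particular, the functor $\RHom_R(T,-)$ restricts to a functor $\Phi : \Hcal_T \to \Mod \Sfr$ sending $X$ to the underlying $\Sfr$-module of $\Hom_{\D(\Mod R)}(T,X)$.

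Next, I would invoke goodness: by \cref{fully-faithful}(iii), the functor $\RHom_R(T,-): \D(\Mod R) \to \D(\dgMod A)$ is fully faithful. Since $\Hcal_T$ is a full subcategory of $\D(\Mod R)$ and the images of objects of $\Hcal_T$ lie in the heart $\Mod \Sfr$ of $\D(\dgMod A)$ (where $\Hom$-sets in $\D(\dgMod A)$ coincide with $\Hom_\Sfr$), we get for any $X,Y \in \Hcal_T$ a chain of natural isomorphisms
\[
\Hom_{\Hcal_T}(X,Y) = \Hom_{\D(\Mod R)}(X,Y) \cong \Hom_{\D(\dgMod A)}(\RHom_R(T,X),\RHom_R(T,Y)) \cong \Hom_\Sfr(\Phi X, \Phi Y).
\]
Thus $\Phi : \Hcal_T \to \Mod \Sfr$ is fully faithful.

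Finally, by \cref{heartequiv} there is an equivalence $\Psi := \Hom_{\Hcal_T}(T,-) : \Hcal_T \toeq \Ctra \Sfr$, and by the very construction of $\Psi$ (as recorded in the proof of \cref{heartequiv}, following \cite[Corollary 6.3]{PS21}) the composition of $\Psi$ with the forgetful functor $U : \Ctra \Sfr \to \Mod \Sfr$ is exactly $\Phi$. Since $U \circ \Psi = \Phi$ is fully faithful and $\Psi$ is an equivalence, $U$ must be fully faithful, which is the desired statement. The only subtle point — and the one worth emphasizing in the write-up — is that the identifications of the various $\Sfr$-module structures on $\Hom_{\D(\Mod R)}(T,X)$ (the one coming from $H^0$ of $\RHom$ into $\D(\dgMod A)$ on one hand, and the one underlying the contramodule structure on the other) really do agree; this is implicit in the monadic description of contramodules recalled in \cref{R:PS} together with \cref{fully-faithful}(ii).
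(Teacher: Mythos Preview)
Your proof is correct and follows essentially the same approach as the paper: both argue that $\RHom_R(T,-)$ is fully faithful by goodness (\cref{fully-faithful}(iii)), that its restriction to $\Hcal_T$ lands in the heart $\Mod\Sfr$ of $\D(\dgMod A)$ and coincides with $\Hom_{\Hcal_T}(T,-)$, and then use the factorization through $\Ctra\Sfr$ from \cref{heartequiv} to conclude that the forgetful functor is fully faithful. Your write-up is somewhat more explicit about the {{\tst}}exactness step and the compatibility of the $\Sfr$-module structures, which is helpful; the reference to \cref{fully-faithful}(ii) for the latter point is not really needed, however, since the identification of actions is already built into the construction of the contramodule lift in \cref{heartequiv}.
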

\begin{proof}
    The functor $\Hom_{\Hcal_T}(T,-): \Hcal_T \to \Mod \Sfr$ factorizes into the composition of the equivalence $\Hom_{\Hcal_T}(T,-): \Hcal_T \toeq \Ctra \Sfr$ of \cref{heartequiv} and the forgetful functor $\Ctra \Sfr \to \Mod \Sfr$. On the other hand, the functor $\RHom_R(T,-): \D(\Mod R) \to \D(\dgMod A)$ is fully faithful by \cref{fully-faithful}. Then also the restriction of $\RHom_R(T,-)$ to the heart $\Hcal_T$ is fully faithful, and this functor naturally identifies with $\Hom_{\Hcal_T}(T,-): \Hcal_T \to \Mod \Sfr$. Here, we use that $\RHom_R(T,-): \D(\Mod R) \to \D(\dgMod A)$ is {{\tst}}exact (i.e., preserves both left and right constituents of the {{\tst}}structures in consideration) with respect to the tilting {{\tst}}structure in the former category and the standard {{\tst}}structure in the latter, whose heart identifies with $\Mod \Sfr$.
\end{proof}
\subsection{Derived equivalence and realization functors}\label{ss-derequivalence}
Given an abelian category $\Acal$, a {{\tst}}structure $(\Ucal,\Vcal)$ in $\D(\Acal)$ (or in $\D^\bdd(\Acal)$) is \newterm{intermediate} if there are integers $n < m$ such that $\D^{\leq n} \subseteq \Ucal \subseteq \D^{\leq m}$, where again $\D^{\leq n}$ consists of complexes with cohomology vanishing in degrees $>n$. It is easy to check that any intermediate {{\tst}}structure in the unbounded derived category restricts to a {{\tst}}structure in the bounded derived category and that any {{\tst}}structure induced by a silting complex in $\D(\Mod R)$ is intermediate, and similarly for cosilting complexes in $\D(\lMod R)$.

Following \cite{BBD81}, as explained in a more detail in \cite[\S 2, \S 3, \S 4]{PV18}, for any intermediate {{\tst}}structure $\Tbb$ in $\D(\Mod R)$ there is a triangle functor $\real_\Tbb: \D^\bdd(\Hcal_\Tbb) \to \D^\bdd(\Mod R)$ which extends the inclusion $\Hcal_\Tbb \subseteq \D^\bdd(\Mod R)$. Such a \newterm{realization functor} may in principle be non-unique\footnote{This pathology seems to disappear once we switch to a strong enough enhancement of the derived category, cf. \cite[Proposition 1.3.3.7]{HA}.} and in fact it is constructed using, and determined by, a suitable enhancement of $\D^\bdd(\Mod R)$ called the \newterm{f-enhancement}, see \cite[\S 3]{PV18}. An example of an f-enhancement is the structure of a \newterm{filtered (bounded) derived category}, which is always available for $\D^\bdd(\Mod R)$.

Let $T \in \D(\Mod R)$ be a silting complex. Then the abstract tilting theory developed in \cite{PV18} shows that the realization functor $\real_T: \D^\bdd(\Hcal_T) \to \D^\bdd(\Mod R)$ is a triangle equivalence if and only if $T$ is tilting \cite[Corollary 5.2]{PV18}. The analogous result is also true for bounded cosilting objects in $\D^\bdd(\lMod R)$, see \textit{loc. cit}.
\begin{prop}\label{fenh}
    Let $R,S$ be rings and let $\Tbb_R$ and $\Tbb_S$ be two {{\tst}}structures in $\D^\bdd(\Mod R)$ and $\D^\bdd(\Mod S)$ respectively. Let $F: \D^\bdd(\Mod R) \to \D^\bdd(\Mod S)$ be a triangle functor which satisfies the following conditions:
    \begin{enumerate}
        \item[(i)] $F$ is $t$-exact with respect to the {{\tst}}structures $\Tbb_R$ and $\Tbb_S$, that is, $F$ preserves both the left and the right constituents of the {{\tst}}structures,
        \item[(ii)] $F$ is fully faithful and its essential image is a thick subcategory of $\D^\bdd(\Mod S)$.
    \end{enumerate}
    By (i), $F$ restricts to an exact functor $F_0: \Hcal_{\Tbb_R} \to \Hcal_{\Tbb_S}$ between the hearts of the two {{\tst}}structures. Then the following diagram commutes (up to natural equivalence):
    $$
    \begin{tikzcd}
        \D^\bdd(\Hcal_{\Tbb_R}) \arrow{d}{\real_{\Tbb_R}}[swap]{}\arrow{r}{F_0} & \D^\bdd(\Hcal_{\Tbb_S}) \arrow{d}{\real_{\Tbb_S}} \\
        \D^\bdd(\Mod R) \arrow{r}{F} & \D^\bdd(\Mod S)
    \end{tikzcd}
    $$
    where in the upper row $F_0$ is naturally extended to the bounded derived categories and the realization functors are taken with respect to suitable f-enhancements.
\end{prop}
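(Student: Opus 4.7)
The plan is to exhibit both legs of the square as $t$-exact triangle extensions of $F_0$, and then pin down the natural equivalence between them through the f-enhanced construction of realization functors from \cite{PV18}. First I would verify that both $\real_{\Tbb_S}\circ F_0$ and $F\circ \real_{\Tbb_R}$ are $t$-exact triangle functors from $\D^\bdd(\Hcal_{\Tbb_R})$ carrying its standard $t$-structure to $\D^\bdd(\Mod S)$ carrying $\Tbb_S$. For the first composition this uses that $F_0$ is exact, so its degreewise extension $\D^\bdd(\Hcal_{\Tbb_R}) \to \D^\bdd(\Hcal_{\Tbb_S})$ is $t$-exact between the standard $t$-structures, combined with the $t$-exactness of $\real_{\Tbb_S}$ from the standard $t$-structure to $\Tbb_S$. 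For the second, it uses the $t$-exactness of $\real_{\Tbb_R}$ from the standard $t$-structure to $\Tbb_R$ together with hypothesis (i) on $F$. Both compositions restrict to $F_0$ on the heart $\Hcal_{\Tbb_R}$ because $\real_{\Tbb_R}$ and $\real_{\Tbb_S}$ are the identity inclusions on their respective hearts.

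Next I would invoke the concrete construction of realization functors via filtered bounded derived categories, chosen as f-enhancements of $\D^\bdd(\Mod R)$ and $\D^\bdd(\Mod S)$, as in \cite{PV18}. In this construction, $\real_{\Tbb_R}$ sends an object of $\D^\bdd(\Hcal_{\Tbb_R})$ to a canonical filtered representative in $\D^{\bdd,\ef}(\Mod R)$ whose associated graded recovers the underlying complex of heart objects, and then forgets the filtration to land in $\D^\bdd(\Mod R)$; $\real_{\Tbb_S}$ is built identically. If I can lift $F$ to a triangle functor $\tilde F\colon \D^{\bdd,\ef}(\Mod R) \to \D^{\bdd,\ef}(\Mod S)$ which is compatible with the filtration-shift operator and which intertwines the forget-filtration functors with $F$, then compatibility of $\tilde F$ with the canonical filtered representatives is automatic, since these representatives are built from the cohomology in the hearts and $F_0$ is the heart restriction of $F$. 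A short diagram chase at the filtered level then produces the desired natural equivalence.

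The main obstacle is constructing the lift $\tilde F$; this is precisely the content of the ``suitable f-enhancements'' clause in the statement. On the filtered complex model of the f-enhancement, $\tilde F$ can be obtained by applying $F$ in a model-theoretic fashion to filtered complexes, and the hypothesis (ii) that $F$ is fully faithful with thick essential image ensures the filtered structure is preserved coherently. Alternatively, one may pass via a dg- or $\infty$-categorical enhancement of $F$, which is available for the derived categories of module categories appearing here. Once $\tilde F$ is in place, the remaining verifications are routine.
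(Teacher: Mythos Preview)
Your overall strategy is right: one needs an f-lifting of $F$ so that \cite[Theorem 3.13]{PV18} applies to give the commutative square. But you miss the key point of how condition (ii) is actually used to produce that lifting, and as a result your argument becomes hand-wavy at exactly the crucial step.

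You propose to equip \emph{both} $\D^\bdd(\Mod R)$ and $\D^\bdd(\Mod S)$ with their natural filtered bounded derived category f-enhancements, and then to lift $F$ between them. This is precisely the hard part: a bare triangulated functor need not lift to the filtered level, and your claim that ``hypothesis (ii) \ldots\ ensures the filtered structure is preserved coherently'' does not explain how. Fully faithfulness with thick essential image says nothing about compatibility with the \emph{given} filtered structures on both sides. Your alternative via a dg- or $\infty$-enhancement of $F$ could in principle work, but it is a genuinely heavier route and requires saying what it means for $F$ itself (not just the categories) to be enhanced.

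The paper's argument uses condition (ii) in a different and much cleaner way. Only the \emph{target} $\D^\bdd(\Mod S)$ is given its natural filtered derived category f-enhancement (\cite[Example 3.2]{PV18}). Then, because $F$ is fully faithful with thick essential image, \cite[Corollary 3.9]{PV18} produces an \emph{induced} f-enhancement on the \emph{source} $\D^\bdd(\Mod R)$, tailored so that $F$ admits an f-lifting by construction. This is the meaning of the phrase ``with respect to suitable f-enhancements'' in the statement: the f-enhancement on the source is not its native one but is pulled back from the target along $F$. With the f-lifting in hand, \cite[Theorem 3.13]{PV18} gives the commutative square directly; no further diagram chase is needed.
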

\begin{proof}
    This follows by combining \cite[Theorem 3.13]{PV18} and \cite[Corollary 3.9]{PV18}. Indeed, by \cite[Example 3.2]{PV18}, the bounded derived category $\D^\bdd(\Mod S)$ admits an f-enhancement in the form of a filtered (bounded) derived category, see \cite[\S 3.1]{PV18} for the definitions. By $(ii)$ and \cite[Corollary 3.9]{PV18}, there is an induced f-enhancement on $\D^\bdd(\Mod R)$ such that $F$ admits an f-lifting with respect to the two f-enhancements. Then \cite[Theorem 3.13]{PV18} applies.
\end{proof}
For any morphism $f: A \to B$ of dg-rings such that $f$ is a quasi-isomorphism of the underlying complexes, the forgetful functor $U_f: \D(\dgMod B) \to \D(\dgMod A)$ is a triangle equivalence with $I_f:= - \otimes_A^\mathbf{L} B \cong \RHom_A(B,-): \D(\dgMod A) \to \D(\dgMod B)$, the inverse equivalence. Moreover, both the functors $U_f$ and $I_f$ preserve the cohomology of the objects. For details, see \cite[Theorem 12.7.2, Lemma 12.7.3]{Yek}.

Now let $R$ be a ring, $T \in \D(\Mod R)$ a tilting complex, $\Sfr = \End_{\D(\Mod R)}(T)$ be the endomorphism ring endowed with the compact topology, and let $A = \dgEnd_R(T)$ be the endomorphism dg-ring of $T$. Since $T$ is tilting, we have $H^0(A) = \Sfr$ and $H^i(A) = 0$ for all $i \neq 0$. Taking the smart truncations, there is a zig-zag of quasi-isomorphisms of dg-rings $\Sfr \cong \tau^{\geq 0}\tau^{\leq 0} A \xleftarrow{r} \tau^{\leq 0} A \xrightarrow{l} A$ which induces a triangle equivalence $\epsilon: \D(\dgMod A) \to \D(\Mod \Sfr)$, where $\epsilon = I_r \circ U_l$. Clearly, $\epsilon$ restricts to an equivalence $\D^\bdd(\dgMod A) \to \D^\bdd(\Mod \Sfr)$ of the corresponding bounded derived categories. By abuse of notation, we will denote by $\epsilon$ also the analogous equivalence $\D(\dglMod A) \toeq \D(\lMod \Sfr)$ on the side of left (dg-)modules.

Note that $T$ being isomorphic in $\D(\Mod R)$ to a bounded complex of projectives implies that both the functors $\RHom_R(T,-)$ and $T \otimes_R^\mathbf{L}-$ restrict to functors between the respective bounded derived categories.
\begin{theorem}\label{tilting-eq}
    Assume that $T$ is good and decent. Then the forgetful functor $\D^\bdd(\Ctra \Sfr) \to \D^\bdd(\Mod \Sfr)$ is fully faithful, and the functor $\phi = \epsilon \circ \RHom_R(T,-)$ induces a triangle equivalence $\phi: \D^\bdd(\Mod R) \to \D^\bdd(\Ctra \Sfr)$ with inverse equivalence $(- \otimes_A^\mathbf{L} T)\circ (\epsilon^{-1})_{\restriction \D(\Ctra \Sfr)}$.
\end{theorem}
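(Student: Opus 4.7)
The plan is to apply \cref{fenh} to the $t$-exact functor $\phi = \epsilon \circ \RHom_R(T,-): \D^\bdd(\Mod R) \to \D^\bdd(\Mod \Sfr)$, and then combine the resulting commutative square with the fact that the realization functor $\real_T: \D^\bdd(\Hcal_T) \to \D^\bdd(\Mod R)$ is a triangle equivalence. The latter holds by \cite[Corollary 5.2]{PV18} because $T$ is tilting.

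First I would verify the hypotheses of \cref{fenh} for $\phi$. Full faithfulness and thickness of the essential image transfer from \cref{fully-faithful}(iii)--(iv) along the triangle equivalence $\epsilon$. For $t$-exactness with respect to the tilting $t$-structure on $\D^\bdd(\Mod R)$ and the standard one on $\D^\bdd(\Mod \Sfr)$, it suffices to observe that $H^i\RHom_R(T,X) = \Hom_{\D(\Mod R)}(T,X[i])$ vanishes for $i>0$ whenever $X \in T\Perp{>0}$, and dually for $X \in T\Perp{\leq 0}$; since $\epsilon$ preserves cohomology the same holds for $\phi$. The induced heart-level functor $\phi_0: \Hcal_T \to \Mod \Sfr$ is then naturally identified, via \cref{heartequiv}, with the composition of the equivalence $\Hom_{\D(\Mod R)}(T,-): \Hcal_T \toeq \Ctra \Sfr$ and the forgetful functor $\Ctra \Sfr \to \Mod \Sfr$.

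Applying \cref{fenh} produces a commutative square in which $\phi \circ \real_T$ is naturally isomorphic to the derived extension of $\phi_0$, since the realization functor on $\D^\bdd(\Mod \Sfr)$ with respect to the standard $t$-structure is essentially the identity. Because $\real_T$ is a triangle equivalence and $\phi$ is fully faithful with thick essential image, the derived extension of $\phi_0$ inherits both of these properties. Under the equivalence $\Hcal_T \cong \Ctra \Sfr$ this derived extension coincides with the derived functor of the abelian forgetful $\Ctra \Sfr \to \Mod \Sfr$; hence the induced functor $\D^\bdd(\Ctra \Sfr) \to \D^\bdd(\Mod \Sfr)$ is fully faithful. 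Moreover, the essential image of $\phi$ agrees with that of $\phi \circ \real_T$ (since $\real_T$ is an equivalence), and the latter is precisely the essential image of $\D^\bdd(\Ctra \Sfr)$ inside $\D^\bdd(\Mod \Sfr)$. Therefore $\phi$ factors as a triangle equivalence $\phi: \D^\bdd(\Mod R) \toeq \D^\bdd(\Ctra \Sfr)$.

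Finally, I would identify the inverse by using the standard adjunction $(-\otimes_A^\mathbf{L} T) \dashv \RHom_R(T,-)$ between $\D(\dgMod A)$ and $\D(\Mod R)$. Full faithfulness of $\RHom_R(T,-)$ from \cref{fully-faithful}(iii) amounts to the counit $(-\otimes_A^\mathbf{L} T) \circ \RHom_R(T,-) \to \mathrm{id}_{\D(\Mod R)}$ being a natural isomorphism. Composing with $\epsilon^{\pm 1}$ on either side yields $((-\otimes_A^\mathbf{L} T) \circ (\epsilon^{-1})_{\restriction \D(\Ctra \Sfr)}) \circ \phi \cong \mathrm{id}_{\D^\bdd(\Mod R)}$, which combined with the equivalence $\phi$ already established identifies the displayed functor as a two-sided inverse. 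The main technical obstacle I anticipate is the application of \cref{fenh} itself: one must confirm that the three conditions (t-exactness, full faithfulness, thickness of the essential image) all hold so that the proposition supplies the required commutative square and compatible f-enhancements. Once this is in place, the concrete identifications through \cref{heartequiv} and \cref{fully-faithful} deliver the remaining content of the theorem.
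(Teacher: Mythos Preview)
Your proposal is correct and follows essentially the same route as the paper: verify the hypotheses of \cref{fenh} for $\phi$ via \cref{fully-faithful}, use that $\real_T$ is an equivalence by \cite[Corollary 5.2]{PV18}, identify the heart-level functor through \cref{heartequiv}, and read off both the full faithfulness of the derived forgetful functor and the equivalence $\phi$ from the resulting commutative square, with the inverse given by the left adjoint. The only cosmetic difference is that the paper invokes \cref{ff-ctra} to record the abelian-level full faithfulness of $\Ctra \Sfr \to \Mod \Sfr$ before passing to derived categories, whereas you deduce the derived full faithfulness directly from the square; both are valid.
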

\begin{proof}
    Observe that the functor $\phi$ restricted to $\Hcal_T$ is equivalent to the functor $\Hom_{\Hcal_T}(T,-): \Hcal_T \to \Ctra \Sfr \subseteq \Mod \Sfr$ of \cref{heartequiv}, the last inclusion being fully faithful is provided by \cref{ff-ctra}. Since $T$ is a tilting complex, any realization functor $\real_T: \D^\bdd(\Hcal_T) \to \D^\bdd(\Mod R)$ is a triangle equivalence \cite[Corollary 5.2]{PV18}. The functor $\phi$ is clearly {{\tst}}exact with respect to the tilting {{\tst}}structure $(T\Perp{>0},T\Perp{\leq 0})$ in $\D(\Mod R)$ and the standard {{\tst}}structure in $\D(\Mod \Sfr)$. By \cref{fully-faithful}, $\phi$ is a fully faithful functor realizing $\D^\bdd(\Mod R)$ as a thick subcategory of $\D^\bdd(\Mod \Sfr)$. Then \cref{fenh} applies and yields a commutative diagram as follows:
    $$
    \begin{tikzcd}
        \D^\bdd(\Hcal_T) \arrow{d}{\real_T}[swap]{\cong}\arrow{r}{\Hom_{\Hcal_T}(T,-)} & \D^\bdd(\Mod \Sfr) \arrow{d}{=} \\
        \D^\bdd(\Mod R) \arrow{r}{\phi} & \D^\bdd(\Mod \Sfr) 
    \end{tikzcd}
    $$
    Since $\phi$ is fully faithful, and since $\Hom_{\Hcal_T}(T,-): \D^\bdd(\Hcal_T) \to \D^\bdd(\Mod \Sfr)$ factorizes through the equivalence $\D^\bdd(\Hcal_T) \toeq \D^\bdd(\Ctra \Sfr)$ obtained from \cref{heartequiv}, it follows that the forgetful functor $\D^\bdd(\Ctra \Sfr)\to \D^\bdd(\Mod \Sfr)$ is fully faithful. Then the essential image of $\Hom_{\Hcal_T}(T,-): \D^\bdd(\Hcal_T) \to \D^\bdd(\Mod \Sfr)$ is precisely the full subcategory $\D^\bdd(\Ctra \Sfr)$ and the commutative square above restricts to another commutative square 
    $$
    \begin{tikzcd}
        \D^\bdd(\Hcal_T) \arrow{d}{\real_T}[swap]{\cong}\arrow{r}{\Hom_{\Hcal_T}(T,-)}[swap]{\cong} & \D^\bdd(\Ctra \Sfr) \arrow{d}{=} \\
        \D^\bdd(\Mod R) \arrow{r}{\phi} & \D^\bdd(\Ctra \Sfr) 
    \end{tikzcd}
    $$
    It follows that $\phi$, viewed as a functor $\D^\bdd(\Mod R) \to \D^\bdd(\Ctra \Sfr)$, is a triangle equivalence.

    The inverse equivalence is $(- \otimes_A^\mathbf{L} T)\circ (\epsilon^{-1})_{\restriction \D^\bdd(\Ctra \Sfr)}$ since this is the left adjoint to $\phi$ and $\D^\bdd(\Ctra \Sfr)$ is a full subcategory of $\D^\bdd(\Mod \Sfr)$.
\end{proof}
\begin{rmk}\label{good-necessary}
    Similarly to \cite[Proposition 8.2]{PS21}, we need the assumption of $T$ being good in order to represent the derived equivalence $\D^\bdd(\Mod R) \cong \D^\bdd(\Ctra \Sfr)$ in terms of the derived functors $\RHom_R(T,-)$ and $- \otimes_A^\mathbf{L} T$. To achieve this, such an assumption is necessary, see \cite[Theorem 6.4]{NS18}. However, the mere existence of a triangle equivalence $\D^\bdd(\Mod R) \cong \D^\bdd(\Ctra \Sfr)$ does not depend on $T$ being good and follows directly from \cite[Corollary 5.2]{PV20} and the equivalence $\Hcal_T \cong \Ctra \Sfr$ of \cref{heartequiv}, or directly from \cref{gen-der-eq}. In fact, the work of Virili \cite[Theorem 7.12]{SV18} shows that we even have the unbounded derived equivalence $\D(\Mod R) \cong \D(\Ctra \Sfr)$. We do not address the issue of representing the unbounded derived equivalence by derived functors in this paper, but we remark that this is available for the tilting module case in \cite[Proposition 8.2]{PS21}.

    Here, we should also explain that passing to a good representative in the equivalence class of a decent tilting complex leads not only to an equivalent tilting heart, but also to equivalent category of contramodules. Indeed, for any non-empty set $X$, the two rings $\End_{\D(\Mod R)}(T)$ and $\End_{\D(\Mod R)}(T^{(X)})$ equipped with compact topologies have equivalent contramodule categories, see \cite[Remark 5.5]{PS19b} and the discussion in \cref{ss-Morita} for more details.
\end{rmk}

In what follows, an easy corollary of the previous result will be useful, which is obtained simply by suitably restricting the equivalence of \cref{tilting-eq} to the corresponding hearts.
\begin{cor}\label{ab-eq}
    In the setting as above, the equivalence of \cref{tilting-eq} restricts to an equivalence $H^0\RHom_{R}(T,-): \Hcal_T \cong \Ctra \Sfr : - \otimes_A^\mathbf{L} T$, when we consider $\Ctra \Sfr$ as a full subcategory of the heart of the standard {{\tst}}structure in $\D(\dgMod A)$.
\end{cor}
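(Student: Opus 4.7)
The strategy is to restrict the equivalence of \cref{tilting-eq} to the hearts of the relevant $t$-structures and then identify the resulting functors explicitly. Both $\RHom_R(T,-): \D^\bdd(\Mod R) \to \D^\bdd(\dgMod A)$ and the comparison equivalence $\epsilon: \D(\dgMod A) \toeq \D(\Mod \Sfr)$ are $t$-exact with respect to the tilting $t$-structure on the source and the standard $t$-structures on the targets; this was already used in the proof of \cref{tilting-eq}. Hence $\phi = \epsilon \circ \RHom_R(T,-)$ is $t$-exact and restricts to an exact functor $\phi_0: \Hcal_T \to \Mod \Sfr$. Because $\phi: \D^\bdd(\Mod R) \to \D^\bdd(\Ctra \Sfr)$ is a triangle equivalence and the forgetful functor $\D^\bdd(\Ctra \Sfr) \to \D^\bdd(\Mod \Sfr)$ is fully faithful (\cref{tilting-eq}), the essential image of $\phi_0$ is contained in $\Ctra \Sfr$ and $\phi_0$ itself is fully faithful.

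Next, I would check that $\phi_0$ is essentially surjective onto $\Ctra \Sfr$. Given $\Mfr \in \Ctra \Sfr \subseteq \Mod \Sfr$, view it as an object of $\D^\bdd(\Ctra \Sfr)$ concentrated in degree $0$. Set $X := (- \otimes_A^\mathbf{L} T)\bigl((\epsilon^{-1})(\Mfr)\bigr) \in \D^\bdd(\Mod R)$, the image of $\Mfr$ under the inverse equivalence of \cref{tilting-eq}. Since this inverse equivalence is also $t$-exact (being the inverse of a $t$-exact equivalence between bounded derived categories of abelian categories with the standard/induced $t$-structure on each side), $X$ lies in $\Hcal_T$, and $\phi_0(X) \cong \Mfr$. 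Combined with full faithfulness, this establishes the equivalence $\phi_0: \Hcal_T \toeq \Ctra \Sfr$.

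Finally, one must identify the restricted functors with the ones named in the statement. Since $\epsilon$ preserves cohomology and, on the hearts, reduces to the tautological identification of the heart of $\D(\dgMod A)$ with $\Mod \Sfr$ via the quasi-isomorphisms $\Sfr \leftarrow \tau^{\leq 0}A \to A$, applying $\phi_0$ to $X \in \Hcal_T$ amounts to computing $H^0 \RHom_R(T,X) = \Hom_{\D(\Mod R)}(T,X)$ and then regarding this $\Sfr$-module as a contramodule via the functor from \cref{heartequiv}. Dually, for $\Mfr \in \Ctra \Sfr$ regarded in the heart of $\D(\dgMod A)$, the inverse $(-\otimes_A^\mathbf{L} T)\circ \epsilon^{-1}$ becomes simply $- \otimes_A^\mathbf{L} T$ after identifying $\Mfr$ with the corresponding object of the heart of $\D(\dgMod A)$, since $\epsilon^{-1}$ on the heart is precisely this identification.

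The main technical point, and the one I would spend most care on, is the compatibility between $\epsilon$ and the two $t$-structures: namely, that the zig-zag $\Sfr \cong \tau^{\geq 0}\tau^{\leq 0}A \xleftarrow{r} \tau^{\leq 0} A \xrightarrow{l} A$ of dg-ring quasi-isomorphisms induces an equivalence $\epsilon$ that is strictly $t$-exact and whose restriction to the heart is the canonical identification. Everything else amounts to a bookkeeping exercise using \cref{tilting-eq,heartequiv,ff-ctra}.
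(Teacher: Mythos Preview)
Your proof is correct and follows the same approach the paper has in mind: the paper does not give an explicit argument beyond the sentence preceding the corollary, stating that it is ``obtained simply by suitably restricting the equivalence of \cref{tilting-eq} to the corresponding hearts.'' You have spelled out exactly this restriction argument, together with the necessary identifications via $\epsilon$ on the hearts, in more detail than the paper provides.
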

\section{Cotilting hearts and discrete modules}
In this section, we consider the cotilting complex $C = T^+$ associated to a decent tilting complex $T$ via \cref{condition} and show that the cotilting heart $\Hcal_C$ is equivalent to another category induced by the compact topology induced by $T$, the category of left discrete modules.
\subsection{Discrete modules and contratensor product}
Let $\Rfr$ be a (left) topological ring. A left $\Rfr$-module $N$ is called \newterm{discrete} if for any element $n \in N$ its annihilator $\Ann_\Rfr(n) = \{r \in \Rfr \mid rn = 0\}$ is open. The full subcategory $\Disc \Rfr$ of $\lMod \Rfr$ consisting of all discrete left $\Rfr$-modules is a locally finitely generated Grothendieck category with an injective cogenerator obtained by taking the maximal discrete submodule of an injective cogenerator of $\lMod \Rfr$.

Now assume that $\Rfr$ is complete and separated. For any $N \in \Disc \Rfr$ and any abelian group $V$, the right $\Rfr$-module $\Hom_{\Mod \Zbb}(N,V)$ structure naturally extends to a structure of a right $\Rfr$-contramodule. There is the \newterm{contratensor product} functor $- \ctrtensor_\Rfr -: \Ctra \Rfr \times \Disc \Rfr \to \Mod \Zbb$, which is defined as a suitable quotient of the ordinary tensor product. The defining property of this functor is the Contratensor-Hom adjunction isomorphism $\Hom_{\Mod \Zbb}(\Mfr \ctrtensor_\Rfr N,V) \cong \Hom_{\Ctra \Rfr}(\Mfr,\Hom_{\Mod \Zbb}(N,V))$. We refer to \cite[\S 7.2]{PS21} for details. Finally, we record the following observation.
\begin{lemma}\label{ff-ctratensor}
Let $T$ be a good and decent tilting complex in $\D(\Mod R)$ and $\Sfr$ as before. Then the contratensor product $- \ctrtensor_\Sfr -: \Ctra \Sfr \times \Disc \Sfr \to \Mod \Zbb$ is naturally equivalent to (the restriction of) the ordinary tensor product $- \otimes_\Sfr -$.
\end{lemma}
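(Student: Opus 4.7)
The plan is to prove the isomorphism by reducing it to the full faithfulness of the forgetful functor $\Ctra \Sfr \to \Mod \Sfr$, which has already been established in \cref{ff-ctra} under the good and decent hypothesis on $T$. The key idea is that both the contratensor product and the ordinary tensor product are characterized by the same adjunction, but with different ambient categories on the right-hand side; once those categories have the same hom-sets for the relevant pair of objects, the two tensor products must agree by the Yoneda lemma.

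More precisely, I would first recall that for $\Mfr \in \Ctra \Sfr$ and $N \in \Disc \Sfr$ there is a canonical surjection $\Mfr \otimes_\Sfr N \twoheadrightarrow \Mfr \ctrtensor_\Sfr N$ arising from the construction of the contratensor product as a further quotient of the ordinary tensor product (see \cite[\S 7.2]{PS21}). To show this map is an isomorphism, I would apply $\Hom_{\Mod \Zbb}(-, V)$ for an arbitrary abelian group $V$. The two tensor-hom adjunctions give natural identifications
\begin{align*}
\Hom_{\Mod \Zbb}(\Mfr \otimes_\Sfr N, V) &\cong \Hom_{\Mod \Sfr}\bigl(\Mfr, \Hom_{\Mod \Zbb}(N, V)\bigr), \\
\Hom_{\Mod \Zbb}(\Mfr \ctrtensor_\Sfr N, V) &\cong \Hom_{\Ctra \Sfr}\bigl(\Mfr, \Hom_{\Mod \Zbb}(N, V)\bigr),
\end{align*}
where on the right-hand side $\Hom_{\Mod \Zbb}(N, V)$ is equipped with the right $\Sfr$-contramodule structure recalled in the preceding paragraph of this subsection (available precisely because $N$ is discrete), with underlying right $\Sfr$-module structure equal to the usual one.

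Next, the good and decent assumption on $T$ together with \cref{ff-ctra} implies that the forgetful functor $\Ctra \Sfr \to \Mod \Sfr$ is fully faithful. In particular the natural comparison map
\[
\Hom_{\Ctra \Sfr}\bigl(\Mfr, \Hom_{\Mod \Zbb}(N, V)\bigr) \longrightarrow \Hom_{\Mod \Sfr}\bigl(\Mfr, \Hom_{\Mod \Zbb}(N, V)\bigr)
\]
is a bijection for every $\Mfr \in \Ctra \Sfr$ and every abelian group $V$. Combining the two adjunctions with this bijection yields that the canonical surjection $\Mfr \otimes_\Sfr N \twoheadrightarrow \Mfr \ctrtensor_\Sfr N$ induces an isomorphism after applying $\Hom_{\Mod \Zbb}(-, V)$ for every $V$, and hence is itself an isomorphism by Yoneda. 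Naturality in both variables is immediate from the naturality of all constructions involved.

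I do not anticipate a genuine obstacle; the statement is essentially a formal consequence of \cref{ff-ctra} combined with the fact that the contratensor product is designed to corepresent $\Hom_{\Ctra \Sfr}(-, \Hom_{\Mod \Zbb}(N, V))$ in the same way the ordinary tensor product corepresents $\Hom_{\Mod \Sfr}(-, \Hom_{\Mod \Zbb}(N, V))$. The only minor care needed is to verify that the two right $\Sfr$-module structures on $\Hom_{\Mod \Zbb}(N, V)$ (the one underlying the contramodule structure, and the one used for the ordinary tensor-hom adjunction) are literally the same, which follows directly from the construction of the contramodule structure on $\Hom_{\Mod \Zbb}(N, V)$ for discrete $N$.
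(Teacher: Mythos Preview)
Your proposal is correct and follows essentially the same approach as the paper: both deduce the claim from the full faithfulness of the forgetful functor $\Ctra \Sfr \to \Mod \Sfr$ established in \cref{ff-ctra}. The paper simply cites \cite[Lemma 7.11]{PS21} for the implication, whereas you spell out that lemma's argument (comparing the two adjunctions and applying Yoneda) explicitly.
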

\begin{proof}
    By \cref{ff-ctra}, the forgetful functor $\Ctra \Rfr \to \Mod \Rfr$ is fully faithful and this implies our claim, see \cite[Lemma 7.11]{PS21}.
\end{proof}
\subsection{Cotilting hearts}
We are ready to describe cotilting hearts obtained from character duals of decent tilting complexes as the corresponding categories of discrete modules.
\begin{theorem}\label{cotilting-heart}
    Let $T \in \D(\Mod R)$ be a good and decent tilting complex and $C = T^+$. The functor $H^0(T \otimes_R^\mathbf{L} -)$ induces an equivalence $\Hcal_C \to \Disc \Sfr$ with the inverse equivalence $\RHom_A(T,-)$, where we consider $\Disc \Sfr$ as a full subcategory of the heart $\lMod \Sfr$ of the standard {{\tst}}structure in $\D(\dglMod A)$.
\end{theorem}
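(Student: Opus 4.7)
My plan is to build the equivalence in four stages: $t$-exactness of $T \otimes_R^\mathbf{L} -$ on hearts, full faithfulness of the induced functor, landing in the subcategory $\Disc \Sfr$, and finally essential surjectivity. The third stage will invoke the derivator/compact generation machinery already used in Section 2, while the fourth is where I expect the main difficulty.

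For $t$-exactness with respect to the cotilting {\tst}structure on $\D(\lMod R)$ and the standard {\tst}structure on $\D(\dglMod A)$, I will use the natural character-duality isomorphism
$$(T \otimes_R^\mathbf{L} X)^+ \cong \RHom_R(X, T^+) = \RHom_R(X, C).$$
For $X \in \Hcal_C = \Perp{\neq 0}C$ the right-hand side has cohomology concentrated in degree zero, and since $(-)^+$ is exact and conservative the same holds for $T \otimes_R^\mathbf{L} X$. Hence $G := H^0(T \otimes_R^\mathbf{L} -)$ restricts to an exact functor $\Hcal_C \to \lMod \Sfr$ under the identification $\epsilon$, and full faithfulness of $T \otimes_R^\mathbf{L} -$ provided by \cref{fully-faithful}(iii) makes $G$ fully faithful, with $\RHom_A(T,-)$ as the natural candidate quasi-inverse.

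To show $G(X) \in \Disc \Sfr$, I invoke the underlying compactly generated derivator of $\D(\Mod R)$ to write $T \cong \hocolim_i F_i$ as a directed homotopy colimit of compact objects along structure morphisms $f_i: F_i \to T$. Since $- \otimes_R^\mathbf{L} X$ commutes with directed homotopy colimits, $T \otimes_R^\mathbf{L} X \cong \hocolim_i (F_i \otimes_R^\mathbf{L} X)$, and so
$$G(X) = \varinjlim_i H^0(F_i \otimes_R^\mathbf{L} X)$$
as a filtered colimit of left $\Sfr$-modules. Any $n \in G(X)$ thus originates from some $n_i$ via $f_i \otimes \id_X$, and for each $s \in \Ifr_{f_i}$ I will compute
$$s \cdot n = ((s \circ f_i) \otimes \id_X)(n_i) = 0.$$
Hence $\Ifr_{f_i} \subseteq \Ann_\Sfr(n)$ is a basic open left ideal in the compact topology, showing $G(X) \in \Disc \Sfr$.

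The hardest step is essential surjectivity of $G$ onto $\Disc \Sfr$. Given $N \in \Disc \Sfr$, I set $Y := \RHom_A(T, N)$; by \cref{fully-faithful}(iv) the essential image $\Ecal$ of $T \otimes_R^\mathbf{L} -$ is a thick subcategory of $\D(\dglMod A)$, and the adjunction counit $T \otimes_R^\mathbf{L} Y \to N$ is an isomorphism precisely when $N \in \Ecal$. The crux of the proof is therefore to show $\Disc \Sfr \subseteq \Ecal$. My plan is to use that $\Disc \Sfr$ is generated by the cyclic modules $\Sfr/\Ifr_f$ with $f \colon F \to T$ and $F \in \D(\Mod R)^\cpt$, realizing each such cyclic as the image of the precomposition map $\Sfr \to \Hom_{\D(\Mod R)}(F, T) \cong H^0(T \otimes_R^\mathbf{L} F^\ast)$, where $F^\ast \in \D(\lMod R)^\cpt$ so that $T \otimes_R^\mathbf{L} F^\ast \in \Ecal$ automatically, and then exploit the thickness of $\Ecal$ together with exactness of $G$ and closure of $\Disc \Sfr$ under filtered colimits in $\lMod \Sfr$ to bootstrap from the cyclic generators to all of $\Disc \Sfr$. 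Once $T \otimes_R^\mathbf{L} Y \cong N$ is secured, the $t$-exactness argument of the first stage applied to $Y$ forces $Y \in \Hcal_C$, and the identity $G(Y) \cong N$ follows, completing the equivalence with $\RHom_A(T,-)$ as quasi-inverse.
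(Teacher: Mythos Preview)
Your first three stages are essentially correct and parallel the paper's argument, with only a cosmetic difference in stage three: the paper writes the argument $X$ (assumed dg-flat) as a direct limit of compact complexes rather than writing $T$ itself as a directed homotopy colimit of compacts, but both routes reach the same conclusion.

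The fourth stage, however, has a genuine gap. Your plan is to show $\Disc \Sfr \subseteq \Ecal$ by first placing the cyclic modules $\Sfr/\Ifr_f$ in $\Ecal$ and then bootstrapping. Neither step is justified. For the first: you only exhibit $\Sfr/\Ifr_f$ as a \emph{subobject} of $H^0(T \otimes_R^\mathbf{L} F^*) \in \Ecal$, but thickness of $\Ecal$ gives closure under cones and summands, not under subobjects in the heart. The domain $\Sfr$ of the map $\Sfr \to H^0(T \otimes_R^\mathbf{L} F^*)$ is not known to lie in $\Ecal$ either (indeed $A$ need not belong to the essential image of $T \otimes_R^\mathbf{L} -$ when $T$ is not compact), so you cannot appeal to closure under cokernels. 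For the bootstrap: even granting the cyclics, passing to all of $\Disc \Sfr$ requires closure of $G(\Hcal_C) = \Ecal \cap \lMod \Sfr$ under arbitrary quotients in $\lMod \Sfr$, which exactness and full faithfulness of $G$ do not provide --- they only give closure under cokernels of maps \emph{between objects already known to lie in the image}.

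The paper circumvents this entirely by exploiting character duality together with the tilting equivalence $\Hcal_T \cong \Ctra \Sfr$ of \cref{ab-eq}, which you never invoke. For $N \in \Disc \Sfr$ one has $N^+ \in \Ctra \Sfr$; compactness of $T$ in $\D(\dglMod A)$ yields $\RHom_A(T,N)^+ \cong N^+ \otimes_A^\mathbf{L} T$, and the latter lies in $\Hcal_T$ by \cref{ab-eq}, so $\RHom_A(T,N) \in \Hcal_C$ by \cref{heartduality}. The counit $T \otimes_R^\mathbf{L} \RHom_A(T,N) \to N$ is then shown to be an isomorphism by applying $(-)^+$ and identifying the resulting map with the unit of the equivalence of \cref{ab-eq} at $N^+$; the unit is handled symmetrically. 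The missing ingredient in your approach is precisely this reduction to the already-established tilting side via $(-)^+$.
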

\begin{proof}
We start by checking that the two functors are well-defined. First, the functor $H^0(T \otimes_R^\mathbf{L} -)$ constitutes a well-defined functor $\D(\lMod R) \to \lMod \Sfr$. This is because the functor $T \otimes_R^\mathbf{L} -$ takes values in $\D(\dglMod A)$, which is sent to $\lMod \Sfr$ by $H^0$. We show that this functor lands in the full subcategory $\Disc \Sfr$; in fact, we show that $H^0(T \otimes_R^\mathbf{L} X)$ is a discrete left $\Sfr$-module for any $X \in \D(\lMod R)$. First, assume that $X$ is a compact object of $\D(\lMod R)$. The compactness of $X$ yields an isomorphism $H^0(T \otimes_R^\mathbf{L} X) \cong H^0 \RHom_R(X^*,T)$ where $X^* = \RHom_R(X,R)$ is a compact object in $\D(\Mod R)$, see \S 3 or \cref{dgmorphisms}. But then $H^0 \RHom_R(X^*,T) = \Hom_{\D(\Mod R)}(X^*,T)$ is clearly a discrete left $\Sfr$-module by the definition of the compact topology on $\Sfr$. Now consider a general object $X \in \D(\lMod R)$, we may and will assume that $X$ is a dg-flat complex. By \cite[Theorem]{CH15}, we can write $X$ as a direct limit $X = \varinjlim_{i \in I}X_i$ of complexes $X_i$ which are compact (in fact, perfect). Then $H^0(T \otimes_R^\mathbf{L} X) \cong H^0(T \otimes_R X) = H^0(T \otimes_R \varinjlim_{i \in I}X_i) \cong \varinjlim_{i \in I} H^0(T \otimes_R^\mathbf{L} X_i)$. Since a direct limit of discrete modules is discrete, this argument is finished.

On the other hand, let $N \in \Disc \Sfr$, then by compactness of $T \in \D(\dglMod A)$ we have that $\RHom_A(T,N)^+ \cong N^+ \otimes_A^\mathbf{L} T$ (see \cref{dgmorphisms}), and so $\RHom_A(T,N)^+ \in \Hcal_T \subseteq \D(\Mod R)$ by \cref{ab-eq} since $N^+ \in \Ctra \Sfr$. In view of \cref{heartduality}, $\RHom_A(T,-)$ induces a well-defined functor $\Disc \Sfr \to \Hcal_C$. Moreover, since $(T \otimes_R^\mathbf{L} X)^+ \cong \RHom_R(X,C)$ and $\Hcal_C = \Perp{\neq 0}C$, we have that $T \otimes_R^\mathbf{L} X$ has cohomology concentrated in degree 0 for any $X \in \Hcal_C$. Therefore, $H^0(T \otimes^\mathbf{L}_R -)$ is naturally identified with $T \otimes_R^\mathbf{L} -$ as functors from the heart $\Hcal_C$, and so the two functors from the statement are well-defined and also mutually adjoint. Therefore, to establish the equivalence it is enough to show that both the unit and the counit morphism of this adjunction are isomorphisms.

Let $N \in \Disc \Sfr$, and consider the counit morphism $\nu: T \otimes_R^\mathbf{L} \RHom_A(T,N) \to N$. To show that $\nu$ is an isomorphism, it suffices to show that $\nu^+: N^+ \to (T \otimes_R^\mathbf{L} \RHom_A(T,N))^+$ is an isomorphism. We can reinterpret $\nu^+$ using the following commutative diagram:
$$
\begin{tikzcd}
    N^+ \arrow[equal]{d}\arrow{r}{\nu^+} & (T \otimes_R^\mathbf{L} \RHom_A(T,N))^+ \\
    N^+ \arrow[equal]{d}\arrow{r} & \RHom_R(T,\RHom_A(T,N)^+)\arrow{u}{\cong}[swap]{\mathsf{adj}} \\
    N^+ \arrow{r}{}                 & \RHom_R(T,N^+ \otimes_A^\mathbf{L} T)\arrow{u}{\cong}[swap]{\RHom_R(T,\delta_{\Qbb/\Zbb,N,T})}
\end{tikzcd}
$$
The isomorphism $\mathsf{adj}$ comes from the suitable Tensor-Hom adjunction, while $\delta_{\Qbb/\Zbb,N,T}$ is the evaluation morphism of \cref{dgmorphisms}, which is an isomorphism because $T$ is compact in $\D(\dgMod A)$. It is straightforward to check that the induced bottom horizontal morphism is identified with the unit morphism of the Tensor-Hom adjunction induced by the dg-bimodule $T$ evaluated at $N^+ \in \Ctra \Sfr \subseteq \D(\dgMod A)$, and this is an isomorphism by \cref{ab-eq}. Recall here that the restriction of the functor $\RHom_R(T,-): \D(\Mod R) \to \D(\dgMod A)$ to $\Hcal_T$ is identified with the functor $H^0\RHom_R(T,-): \Hcal_T \cong \Ctra \Sfr$ of \cref{ab-eq}.

Let $X \in \Hcal_C$ and consider the unit morphism $\eta: X \to \RHom_A(T,T \otimes_R^\mathbf{L} X)$. Similarly as in the previous paragraph, we have the following natural isomorphisms induced by the Tensor-Hom adjunction and \cref{dgmorphisms}:
$$\RHom_A(T,T \otimes_R^\mathbf{L} X)^+ \cong (T \otimes_R^\mathbf{L} X)^+ \otimes_A^\mathbf{L} T \cong$$
$$\cong \RHom_R(T,X^+) \otimes_A^\mathbf{L} T.$$
Arguing as above, it follows that $\eta^+$ identifies with the counit morphism of the Tensor-Hom equivalence of \cref{ab-eq} evaluated at $X^+ \in \Hcal_T$, and therefore $\eta$ is an isomorphism.
\end{proof}
As a consequence, we obtain a description of the hearts induced by cotilting complexes of cofinite type. Recall from \cref{noeth} that if $R$ is left hereditary or commutative noetherian then any cotilting complex is of cofinite type.
\begin{cor}\label{cotilting-cofinite}
    Let $C \in \D(\lMod R)$ be a cotilting complex of cofinite type. Then there is a complete and separated topological ring $\Sfr$ such that $\Hcal_C \cong \Disc \Sfr$.
\end{cor}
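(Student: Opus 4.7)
The plan is to reduce the statement to \cref{cotilting-heart} by using the bijection between cotilting complexes of cofinite type and decent tilting complexes provided by \cref{cofinite}, together with the freedom to replace a silting complex by a good representative of its equivalence class.

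First, I would invoke \cref{cofinite} to find a decent tilting complex $T \in \D(\Mod R)$ such that the cotilting complex $T^+$ is equivalent to $C$ in $\D(\lMod R)$. Since the bijection in \cref{cofinite} is established at the level of equivalence classes, and since the cotilting heart $\Hcal_C$ depends only on the equivalence class of the cotilting complex (as the associated \textit{t}-structure is the same), we may replace $C$ by $T^+$ without loss of generality, so that $\Hcal_C = \Hcal_{T^+}$.

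Next, I would replace $T$ by a good silting complex. By \cref{good}, there exists a good silting complex $T'$ equivalent to $T$, that is, with $\Add(T) = \Add(T')$. Since being decent is invariant under equivalence of silting objects (as noted in the remark after the definition of decent), $T'$ is again a decent tilting complex. Moreover, equivalent silting complexes have equivalent character duals as cotilting complexes (their product closures correspond under character duality, which follows from \cref{silting-cosilting}), so $T'^+$ remains equivalent to $C$ and thus $\Hcal_{T'^+} = \Hcal_C$.

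Finally, since $T'$ is good and decent, \cref{cotilting-heart} applies directly and yields an equivalence $\Hcal_{T'^+} \cong \Disc \Sfr$, where $\Sfr = \End_{\D(\Mod R)}(T')$ is endowed with the compact topology, and this ring is complete and separated by (the statement of \cref{heartequiv} invoked in) \cref{tstr-gen}. Combining the equalities $\Hcal_C = \Hcal_{T^+} = \Hcal_{T'^+}$ yields the desired equivalence $\Hcal_C \cong \Disc \Sfr$. The only point requiring any care is the verification that replacing $T$ by an equivalent good representative does not change the equivalence class of the dual cotilting complex, but this is immediate from the naturality of the character duality functor $(-)^+$ combined with its exactness, which ensures it maps $\Add(T)$-closures to $\Prod(T^+)$-closures up to pure-injectivity.
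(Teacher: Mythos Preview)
Your proposal is correct and follows essentially the same approach as the paper's own proof: invoke \cref{cofinite} to obtain a decent tilting complex $T$ with $T^+$ equivalent to $C$, pass to a good representative $T'$ via \cref{good}, and then apply \cref{cotilting-heart}. The final paragraph of your proposal is superfluous, since you already correctly justified that equivalent silting complexes have equivalent character duals by citing \cref{silting-cosilting}.
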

\begin{proof}
    By \cref{cofinite}, there is a decent tilting complex $T \in \D(\Mod R)$ such that $C$ is equivalent to $T^+$ as cosilting complexes. By \cref{good}, there is a good and decent tilting complex $T'$ which is equivalent to $T$ as silting complexes. Putting $\Sfr = \End_{\D(\Mod R)}(T')$ and endowing $\Sfr$ with the compact topology, the proof is finished by noting that $\Hcal_C = \Hcal_{T'^+} \cong \Disc \Sfr$ where the last equivalence follows from \cref{cotilting-heart}.
\end{proof}
\subsection{Cotilting derived equivalence}
The following is the cotilting counterpart of \cref{tilting-eq}.
\begin{theorem}\label{cotilting-eq}
    Assume that $T \in \D(\Mod R)$ is a good and decent tilting complex and let again $\Sfr = \End_{\D(\Mod R)}(T)$. Then the forgetful functor $\D^\bdd(\Disc \Sfr) \to \D^\bdd(\lMod \Sfr)$ is fully faithful and the functor $\psi = \epsilon \circ (T \otimes_R^\mathbf{L} -)$ induces a triangle equivalence $\psi: \D^\bdd(\lMod R) \to \D^\bdd(\Disc \Sfr)$.
\end{theorem}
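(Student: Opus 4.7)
The strategy closely parallels that of \cref{tilting-eq}, with $\RHom_R(T,-)$ replaced by $T \otimes_R^\mathbf{L}-$ and the silting heart $\Hcal_T$ by the cosilting heart $\Hcal_C$. The necessary inputs are already in place: by \cref{fully-faithful}(iii)--(iv), the functor $T \otimes_R^\mathbf{L} -\colon \D(\lMod R) \to \D(\dglMod A)$ is fully faithful with thick essential image, so after composition with the equivalence $\epsilon$ the same holds for $\psi \colon \D^\bdd(\lMod R) \to \D^\bdd(\lMod \Sfr)$; by \cref{cotilting-heart}, $H^0(T \otimes_R^\mathbf{L} -)$ restricts to an equivalence of abelian categories $\Hcal_C \toeq \Disc \Sfr$; and since $T$ is decent, the complex $C = T^+$ is cotilting by \cref{condition}, so the realization functor $\real_C \colon \D^\bdd(\Hcal_C) \to \D^\bdd(\lMod R)$ is a triangle equivalence by \cite[Corollary 5.2]{PV18}.

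First I verify that $\psi$ is $t$-exact with respect to the cosilting $t$-structure on $\D^\bdd(\lMod R)$ (shifted so that its heart is $\Hcal_C = \Perp{\neq 0}C$) and the standard $t$-structure on $\D^\bdd(\lMod \Sfr)$. This is a direct cohomological calculation: the natural isomorphism $(T \otimes_R^\mathbf{L} X)^+ \cong \RHom_R(X,C)$ combined with the conservativity of character duality converts vanishing of $\Hom_{\D(\lMod R)}(X, C[i])$ into vanishing of $H^{-i}(T \otimes_R^\mathbf{L} X)$, yielding after matching indices that $\psi(\Perp{\leq -1}C) \subseteq \D^{\leq 0}(\lMod \Sfr)$ and $\psi(\Perp{\geq 0}C) \subseteq \D^{\geq 0}(\lMod \Sfr)$. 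I then apply the left-module analogue of \cref{fenh} to $\psi$ to produce a commutative square
$$
\begin{tikzcd}
\D^\bdd(\Hcal_C) \arrow{d}{\real_C}[swap]{\cong}\arrow{r}{\psi_0} & \D^\bdd(\lMod \Sfr) \arrow{d}{=} \\
\D^\bdd(\lMod R) \arrow{r}{\psi} & \D^\bdd(\lMod \Sfr)
\end{tikzcd}
$$
where $\psi_0$ is the derived extension of $H^0(T \otimes_R^\mathbf{L} -)\colon \Hcal_C \to \lMod \Sfr$. By \cref{cotilting-heart}, this $\psi_0$ factorizes as the equivalence $\D^\bdd(\Hcal_C) \toeq \D^\bdd(\Disc \Sfr)$ followed by the forgetful functor $U\colon \D^\bdd(\Disc \Sfr) \to \D^\bdd(\lMod \Sfr)$. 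Since $\real_C$ is an equivalence and $\psi$ is fully faithful, $\psi_0$ is fully faithful, and therefore so is $U$.

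It remains to identify essential images. The essential image of $\psi$ is a thick subcategory of $\D^\bdd(\lMod \Sfr)$ which contains $\psi(\Hcal_C) \subseteq \Disc \Sfr$; since every object of $\D^\bdd(\lMod R)$ lies in the thick closure of $\Hcal_C$, this essential image coincides exactly with $\thick_{\D^\bdd(\lMod \Sfr)}(\Disc \Sfr)$. By the same reasoning, the now fully faithful $U$ has essential image equal to the same thick closure, and thus identifies $\D^\bdd(\Disc \Sfr)$ with the essential image of $\psi$. Hence $\psi$ corestricts to a triangle equivalence $\D^\bdd(\lMod R) \toeq \D^\bdd(\Disc \Sfr)$. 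The conceptual heavy lifting is done by \cref{cotilting-heart} and \cref{fully-faithful}; the one genuinely delicate point is the index bookkeeping for the shifted cosilting $t$-structure during the $t$-exactness verification.
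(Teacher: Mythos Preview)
Your proposal is correct and follows essentially the same approach as the paper's proof: both use \cref{fully-faithful} for full faithfulness and thickness of the image of $\psi$, verify $t$-exactness via the adjunction $(T \otimes_R^\mathbf{L} X)^+ \cong \RHom_R(X,C)$, invoke \cref{fenh} to obtain the commutative square with $\real_C$, and then use \cref{cotilting-heart} to factor the heart functor through $\Disc \Sfr$, deducing full faithfulness of the forgetful functor and the equivalence. The only cosmetic difference is that you work with the shifted cosilting $t$-structure $(\Perp{\leq -1}C,\Perp{\geq 0}C)$ while the paper keeps the unshifted pair $(\Perp{\leq 0}C,\Perp{>0}C)$ and implicitly matches it to a shifted standard $t$-structure on the target; your explicit thick-closure argument for the essential image is also slightly more detailed than the paper's, which simply restricts the commutative square.
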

\begin{proof}
    This is proved similarly to \cref{tilting-eq}. Put $C=T^+$ and recall from \cref{condition} that this is a cotilting complex in $\D(\lMod R)$. By \cref{fully-faithful}, $(T \otimes_R^\mathbf{L} -): \D^\bdd(\lMod R) \to \D^\bdd(\dglMod A)$ is fully faithful, and thus so is $\psi$. Furthermore, $\psi$ is clearly $t$-exact with respect to the cotilting t-structure $(\Perp{\leq 0}C, \Perp{>0}C)$ in $\D(\lMod R)$ and the standard t-structure in $\D(\lMod \Sfr)$; recall that $\RHom_R(-,C) = \RHom_R(-,T^+)$ is identified with $(T \otimes_R^\mathbf{L} - )^+$ by adjunction. It follows that $\psi$ restricts to an exact functor $\tau: \Hcal_C \to \lMod \Sfr$, which identifies with the restriction of $H^0(T \otimes_R^\mathbf{L}-)$ to $\Hcal_C$. Note that $H^0(T \otimes_R^\mathbf{L}-)$ further identifies with $H^0(\Psi)$ as a functor $\D^\bdd(\lMod R) \to \lMod \Sfr$ as the triangle equivalence $\epsilon: \D(\dglMod A) \toeq \D(\lMod \Sfr)$ is t-exact with respect to the two standard t-structures and restricts to the identity functor on the heart $\lMod \Sfr$.
    
    Arguing as in \cref{tilting-eq} using \cref{cotilting-heart} this time, \cref{fenh} yields a commutative square:
    $$
    \begin{tikzcd}
        \D^\bdd(\Hcal_C) \arrow{d}{\real_C}[swap]{\cong}\arrow{r}{\tau} & \D^\bdd(\lMod \Sfr) \arrow{d}{=} \\
        \D^\bdd(\lMod R) \arrow{r}{\psi} & \D^\bdd(\lMod \Sfr) 
    \end{tikzcd}
    $$
    Since $\tau$ factorizes as $\Hcal_C \toeq \Disc \Sfr \subseteq \lMod \Sfr$, we obtain that the forgetful functor $\D^\bdd(\Disc \Sfr) \to \D^\bdd(\lMod \Sfr)$ is fully faithful. Then the square above induces another commutative square
    $$
    \begin{tikzcd}
        \D^\bdd(\Hcal_C) \arrow{d}{\real_C}[swap]{\cong}\arrow{r}{\tau} & \D^\bdd(\Disc \Sfr) \arrow{d}{=} \\
        \D^\bdd(\lMod R) \arrow{r}{\psi} & \D^\bdd(\Disc \Sfr) 
    \end{tikzcd}
    $$
    where the upper arrow is an equivalence, making $\psi: \D^\bdd(\lMod R) \to \D^\bdd(\Disc \Sfr)$ an equivalence.
\end{proof}
\begin{rmk}
    As in the tilting case of \cref{good-necessary}, we remark that the assumption of $T$ being good is needed to represent the derived equivalence $\D^\bdd(\lMod R) \to \D^\bdd(\Disc \Sfr)$ by $T \otimes_R^\mathbf{L} -$, but the mere existence of such a triangle equivalence is guaranteed without such assumption. This follows from \cite[Corollary 5.2]{PV20} together with the equivalence $\Hcal_{T^+} \cong \Disc \Sfr$ of \cref{cotilting-heart}. Again, \cite[Theorem 7.12]{SV18} also yields the unbounded derived equivalence $\D(\lMod R) \to \D(\Disc \Sfr)$.
\end{rmk}
\section{Tensor compatibility and the main result}
In Rickard's derived Morita theory for rings, compact tilting objects capture the derived equivalences between module categories of rings. The main aim of this section is to provide a topological algebra generalization by showing that the pairs of derived equivalences between a ring and a topological ring of \cref{tilting-eq} and \cref{cotilting-eq} which are tied together by a suitable tensor compatibility condition are parametrized by decent tilting complexes.
\subsection{Tensor compatiblity} The following observation can be seen as a large tilting version of the Rickard's result on tensor compatibility of representable derived equivalences in the classical setting of \cite{Ri91} (see \cref{I1} from the Introduction).
\begin{theorem}\label{thm-tensor}
    In the setting of \cref{tilting-eq} and \cref{cotilting-eq}, there is a commutative square as follows:
    $$
    \begin{tikzcd}
        \D^\bdd(\Mod R) \times \D^\bdd(\lMod R) \arrow{d}{\phi \times \psi}[swap]{\cong} \arrow{r}{- \otimes_R^\mathbf{L} -} & \D(\Mod \Zbb) \arrow{d}{=} \\
        \D^\bdd(\Ctra \Sfr) \times \D^\bdd(\Disc \Sfr) \arrow{r}{- \ctrtensor_{\Sfr}^\mathbf{L} -} & \D(\Mod \Zbb) 
    \end{tikzcd}
    $$
\end{theorem}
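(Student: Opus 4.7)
The plan is to establish a natural isomorphism $X \otimes_R^\mathbf{L} Y \cong \phi(X) \ctrtensor_\Sfr^\mathbf{L} \psi(Y)$ by passing through the dg-ring $A = \dgEnd_R(T)$ and chaining three natural isomorphisms, each supplied by a different input from the preceding sections.

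First I would exploit the tensor-compatibility of the equivalence $\epsilon$. Since $\epsilon$ is induced by the zig-zag of quasi-isomorphisms of dg-rings $\Sfr \cong \tau^{\geq 0}\tau^{\leq 0} A \leftarrow \tau^{\leq 0} A \to A$, the standard base-change argument --- that both restriction and extension of scalars along a quasi-isomorphism of dg-rings preserve the derived tensor product --- supplies a natural isomorphism $\epsilon(M) \otimes_\Sfr^\mathbf{L} \epsilon(N) \cong M \otimes_A^\mathbf{L} N$ for $M \in \D(\dgMod A)$ and $N \in \D(\dglMod A)$. Unwinding the definitions of $\phi$ and $\psi$, this specializes to
$$\phi(X) \otimes_\Sfr^\mathbf{L} \psi(Y) \cong \RHom_R(T,X) \otimes_A^\mathbf{L} (T \otimes_R^\mathbf{L} Y).$$

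Next I would apply associativity of the derived tensor product together with the counit isomorphism coming from $T$ being good. Associativity gives $\RHom_R(T,X) \otimes_A^\mathbf{L} (T \otimes_R^\mathbf{L} Y) \cong (\RHom_R(T,X) \otimes_A^\mathbf{L} T) \otimes_R^\mathbf{L} Y$. By \cref{fully-faithful}(iii), the functor $\RHom_R(T,-)$ is fully faithful, which is equivalent to the counit $\RHom_R(T,X) \otimes_A^\mathbf{L} T \to X$ of the adjunction $(- \otimes_A^\mathbf{L} T, \RHom_R(T,-))$ being an isomorphism for all $X \in \D(\Mod R)$. Thus the middle expression is naturally isomorphic to $X \otimes_R^\mathbf{L} Y$, which produces the top row of the square.

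Finally I need to pass from the ordinary derived tensor product over $\Sfr$ to the contratensor product $\ctrtensor_\Sfr^\mathbf{L}$. By \cref{ff-ctratensor}, on the abelian level the two bifunctors already agree after restriction along the forgetful functors $\Ctra \Sfr \to \Mod \Sfr$ and $\Disc \Sfr \to \lMod \Sfr$; since these embeddings extend to fully faithful embeddings of the bounded derived categories by \cref{tilting-eq} and \cref{cotilting-eq}, the derived versions $\ctrtensor_\Sfr^\mathbf{L}$ and the restriction of $\otimes_\Sfr^\mathbf{L}$ can be identified on $\D^\bdd(\Ctra \Sfr) \times \D^\bdd(\Disc \Sfr)$. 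I anticipate this last identification to be the main technical obstacle --- specifically, verifying that $\ctrtensor_\Sfr^\mathbf{L}$ may be computed via projective resolutions in $\Ctra \Sfr$ whose images under the forgetful functor are adapted to computing $\otimes_\Sfr^\mathbf{L}$. Once this compatibility is in hand, the three natural isomorphisms chain together and assemble into the required commutative square.
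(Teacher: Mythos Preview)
Your proposal is correct and follows essentially the same route as the paper: pass through $A$ via the tensor-compatibility of $\epsilon$, use associativity, and invoke the counit isomorphism from \cref{fully-faithful}(iii). The only difference is in how the final step is handled: the paper dispatches your anticipated ``main technical obstacle'' in one line at the very beginning, simply citing \cref{ff-ctratensor} to identify $-\ctrtensor_\Sfr-$ with the restriction of $-\otimes_\Sfr-$ and then working with $\otimes_\Sfr^\mathbf{L}$ throughout, so no separate intrinsic derivation of the contratensor product needs to be reconciled with $\otimes_\Sfr^\mathbf{L}$.
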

\begin{proof}
    Note first that here the contratensor product $-\ctrtensor_{\Sfr}-$ identifies with the restriction of the ordinary tensor product $-\otimes_{\Sfr}-$ by \cref{ff-ctratensor}. For $X \in \D^\bdd(\Mod R)$ and $Y \in \D^\bdd(\lMod R)$, we have a sequence of natural isomorphisms in $\D(\Mod \Zbb)$ as follows:
    $$\phi(X) \otimes_\Sfr^\mathbf{L} \psi(Y) = \epsilon(\RHom_R(T,X)) \otimes^\mathbf{L}_\Sfr \epsilon(T \otimes_R^\mathbf{L} Y) \cong \RHom_R(T,X) \otimes^\mathbf{L}_A (T \otimes_R^\mathbf{L} Y) \cong$$
    $$\cong (\RHom_R(T,X) \otimes_A^\mathbf{L} T) \otimes_R^\mathbf{L} Y \cong X \otimes_R^\mathbf{L} Y.$$
    In the second isomorphism we use the fact that the equivalence $\epsilon: \D(\dgMod A) \to \D(\Mod \Sfr)$ (resp. $\epsilon: \D(\dglMod A) \to \D(\lMod \Sfr)$) preserves quasi-isomorphism and derived tensor products, see \cite[Theorem 12.7.2]{Yek}. The last isomorphism follows from \cref{fully-faithful}(iii).
\end{proof}
\subsection{Topological Morita theory}\label{ss-Morita}
We need to recall the (non-derived) topological Morita theory\footnote{Note that the topological Morita theory is more general than the ordinary Morita theory even if we start with a discrete ring, because here we are allowed to take projective generators which are not finitely generated.} of complete and separated topological ring developed in \cite{PS21} and \cite{PS19b}. Let $\Rfr$ be a complete and separated (left) topological ring and $\Pfr \in \Ctra \Rfr$ a projective generator. Then the endomorphism ring $\Rfr' = \End_{\Ctra \Rfr}(\Pfr)$ admits a naturally induced linear topology of open left ideals such that $\Rfr'$ is complete and separated and there is an equivalence $\Hom_{\Ctra \Rfr}(\Pfr,-): \Ctra \Rfr \cong \Ctra{\Rfr'}$ which takes $\Pfr$ to $\Rfr'$, \cite[Theorem 7.9, Corollary 6.3]{PS21}. In this situation, we say that $\Rfr$ and $\Rfr'$ are \newterm{topologically Morita equivalent}. In particular, if $T \in \D(\Mod R)$ is a tilting complex with $\Rfr = \End_{\D(\Mod R)}(T)$ endowed with the compact topology and $T'$ is a tilting complex equivalent to $T$ then $\Rfr' = \End_{\D(\Mod R)}(T')$ admits a linear topology which makes it topologically Morita equivalent to $\Rfr$. Furthermore, one can check directly that this topology described in \cite[Corollary 7.7]{PS21} coincides with the compact topology defined on the endomorphism ring $\Rfr'$, see also \cite[Remark 5.5]{PS19b}. We will mainly be interested in the case $\Pfr = \Rfr^{(X)}$ for a non-empty set $X$, and then there is also an equivalence $\Pfr \ctrtensor_\Rfr - : \Disc \Rfr \cong \Disc {\Rfr'}$ of the discrete module categories available from \cite[Proposition 5.2]{PS19b}. Finally, these two equivalences are jointly compatible with the contratensor structure: Let $\Mfr \in \Ctra \Rfr$, $N \in \Disc \Rfr$, and $V \in \Mod \Zbb$, then we have the natural isomorphism $\Hom_{\Ctra \Rfr}(\Mfr,\Hom_{\Zbb}(N,V)) \cong \Hom_{\Ctra \Rfr'}(\Hom_{\Ctra \Rfr}(\Pfr,\Mfr),\Hom_{\Ctra \Rfr}(\Pfr,\Hom_{\Zbb}(N,V)))$ induced by the equivalence. Applying the Contratensor-Hom adjunction three times, we obtain another natural isomorphism $\Hom_{\Zbb}(\Mfr \ctrtensor_\Rfr N,V) \cong \Hom_{\Zbb}(\Hom_{\Ctra \Rfr}(\Pfr,\Mfr) \ctrtensor_{\Rfr'} (\Pfr \ctrtensor_\Rfr N),V)$, and thus we obtain a natural isomorphism $\Mfr \ctrtensor_\Rfr N \cong \Hom_{\Ctra \Rfr}(\Pfr,\Mfr) \ctrtensor_{\Rfr'} (\Pfr \ctrtensor_\Rfr N)$ by Yoneda.
\subsection{Main result}
If $\Acal$ is an abelian category with enough projectives, let $\K^\bdd(\Acal_\Proj)$ denote the homotopy category of bounded complexes of projectives objects of $\Acal$, considered as a full subcategory of $\D^\bdd(\Acal)$. 
\begin{theorem}\label{converse}
    Let $R$ be a ring and $\Sfr$ a complete, separated topological ring. The following are equivalent:
    \begin{enumerate}
        \item[(i)] There is a pair of triangle equivalences $\alpha:\D^\bdd(\Mod R) \toeq \D^\bdd(\Ctra \Sfr)$ and $\beta:\D^\bdd(\lMod R) \toeq \D^\bdd(\Disc \Sfr)$ which make the following diagram commute:
        $$
        \begin{tikzcd}
            \D^\bdd(\Mod R) \times \D^\bdd(\lMod R) \arrow{d}{\alpha \times \beta}[swap]{\cong} \arrow{r}{- \otimes_R^\mathbf{L} -} & \D(\Mod \Zbb) \arrow{d}{=} \\
            \D^\bdd(\Ctra \Sfr) \times \D^\bdd(\Disc \Sfr) \arrow{r}{- \ctrtensor_{\Sfr}^\mathbf{L} -} & \D(\Mod \Zbb) 
        \end{tikzcd}
        $$
        \item[(ii)] There is a decent tilting complex $T \in \D(\Mod R)$ such that $\End_{\D(\Mod R)}(T)$ endowed with the compact topology is isomorphic as a linear topological ring to $\Sfr$.
    \end{enumerate}
\end{theorem}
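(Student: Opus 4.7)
The plan is to treat the two implications of \cref{converse} separately. For $(ii) \Rightarrow (i)$, the content essentially amounts to a packaging of \cref{tilting-eq}, \cref{cotilting-eq}, and \cref{thm-tensor}. The one subtlety is that those three results require the decent tilting complex to be good, while the present statement imposes no such assumption. I would remedy this by invoking \cref{good} to replace $T$ with an equivalent good decent tilting complex $T'$, whose compact endomorphism ring $\Sfr' = \End_{\D(\Mod R)}(T')$ is topologically Morita equivalent to $\Sfr$ by the discussion in \cref{ss-Morita}. The induced equivalences $\Ctra \Sfr \cong \Ctra \Sfr'$ and $\Disc \Sfr \cong \Disc \Sfr'$ are compatible with the contratensor product, so composing them with the representable derived equivalences for $T'$ yields the required tensor-compatible pair for $\Sfr$.

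The main content is the converse direction $(i) \Rightarrow (ii)$. The strategy is to recover the complex by defining $T := \alpha^{-1}(\Sfr_\Sfr)$, where $\Sfr \in \Ctra \Sfr$ is the rank-one free right contramodule placed in cohomological degree zero. The isomorphism $\End_{\D(\Mod R)}(T) \cong \End_{\Ctra \Sfr}(\Sfr) = \Sfr$ of underlying abstract rings is immediate from $\alpha$ being an equivalence, and $\Hom_{\D(\Mod R)}(T, T[i]) = 0$ for $i \ne 0$ follows from $\Sfr$ being a projective object concentrated in degree zero. The crucial external input is the tensor-compatibility square, which for any $Y \in \D^\bdd(\lMod R)$ delivers
$$T \otimes_R^\mathbf{L} Y \;\cong\; \Sfr \ctrtensor_\Sfr^\mathbf{L} \beta(Y) \;\cong\; \beta(Y)$$
in $\D(\Mod \Zbb)$. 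In particular $T \otimes_R^\mathbf{L} Y$ has bounded cohomology for every bounded $Y$, so $T$ has finite flat amplitude; combining this with the character-duality identification $\RHom_R(T, Y^+) \cong (T \otimes_R^\mathbf{L} Y)^+$, the embedding of every right module into its double character dual $M \hookrightarrow M^{++}$, and a routine dimension shift along $0 \to M \to M^{++} \to M^{++}/M \to 0$, one concludes that $T$ has finite projective dimension, hence is quasi-isomorphic to a bounded complex of projectives.

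To see that $T$ is tilting, pull back the standard $t$-structure on $\D^\bdd(\Ctra \Sfr)$ along $\alpha$ to a bounded $t$-structure on $\D^\bdd(\Mod R)$ whose heart $\Hcal$ is equivalent to $\Ctra \Sfr$ and in which $T$ corresponds to the projective generator $\Sfr$. Intermediacy of this bounded $t$-structure is forced by the boundedness of the Tor- and Ext-amplitude of $T$ established above, and a standard lifting argument promotes it to the silting $t$-structure of $T$ on all of $\D(\Mod R)$; projectivity of $T$ in the heart then makes $T$ tilting. To upgrade to \emph{decent} it suffices by \cref{condition} to verify that $T^+$ is cotilting, which I would establish by analysing the cosilting $t$-structure of $T^+$ through $\beta$: the identity $T \otimes_R^\mathbf{L} Y \cong \beta(Y)$ together with character duality identifies $\Hcal_{T^+}$ with $\Disc \Sfr$ and yields the product-closure condition $\Prod(T^+) \subseteq \Hcal_{T^+}$. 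Finally, the compact topology on $\End(T)$ coincides with the given topology on $\Sfr$ by the uniqueness part of the topological Morita theorem recalled in \cref{ss-Morita}: both the equivalence $\Hcal_T \cong \Ctra \End(T)$ of \cref{heartequiv} and the equivalence $\Hcal_T \cong \Ctra \Sfr$ induced by $\alpha$ send the distinguished rank-one free contramodule to $T$, so the two topological rings on the underlying set $\Sfr$ must agree.

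The principal technical obstacle is the passage from the bounded to the unbounded setting: promoting the pulled-back bounded $t$-structure on $\D^\bdd(\Mod R)$ to a genuine silting $t$-structure on the whole of $\D(\Mod R)$, and extending the vanishing $\Hom(T, T[i]) = 0$ (which $\alpha$ delivers only for $T$ itself) to arbitrary self-coproducts $T^{(X)}$. In both of these points the tensor-compatibility square is indispensable, since it is essentially the only tool available for transferring information between the contramodule and discrete-module sides — information which is not visible to either $\alpha$ or $\beta$ in isolation.
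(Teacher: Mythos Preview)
Your $(ii) \Rightarrow (i)$ matches the paper's argument. For $(i) \Rightarrow (ii)$ you and the paper share the skeleton: set $T = \alpha^{-1}(\Sfr)$, use the tensor square to show $T^+$ is cotilting, and conclude $T$ is decent via \cref{condition}. There are two places where your outline is incomplete and the paper takes a cleaner route.

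The first is the obstacle you yourself flag, promoting the pulled-back bounded $t$-structure to a silting $t$-structure on all of $\D(\Mod R)$. The paper never lifts the $t$-structure. It instead verifies directly that $T$ satisfies the characterization of silting complexes from \cite[Proposition~5.3]{AHH19}: that $T$ is isomorphic to a bounded complex of projectives, that $\Add(T) \subseteq T\Perp{\neq 0}$, and that $\thick(\Add(T)) = \K^\bdd(\Modpr R)$. The first and third conditions follow because $\alpha$ must restrict to an equivalence $\K^\bdd(\Modpr R) \toeq \K^\bdd(\Ctrapr \Sfr)$, using the intrinsic description of $\K^\bdd(\Acal_\Proj)$ inside $\D^\bdd(\Acal)$ as those $X$ with $\Hom(X,Y[i]) = 0$ for all $Y$ and $i \gg 0$ (as in \cite[Proposition~6.2]{Ri89} and \cite[Theorem~5.3]{PV18}); this replaces your flat-dimension-plus-dimension-shift argument for (a) and gives (c) for free. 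The second condition holds because coproducts of copies of the projective object $\Sfr$ in $\D^\bdd(\Ctra \Sfr)$ coincide with the underived ones and hence lie in the heart --- this also disposes of your concern about $T^{(X)}$. The paper runs the dual argument with $C = \beta^{-1}(W)$ for an injective cogenerator $W$ of $\Disc \Sfr$ and \cite[Proposition~6.8]{AHH19} to get a genuine cotilting complex $C$; the tensor square then shows that $\beta$ identifies the cosilting $t$-structure of $T^+$ with the standard one, so $T^+$ and $C$ are equivalent cosilting complexes and $T^+$ is cotilting.

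The second is the identification of topologies. You invoke a ``uniqueness part of the topological Morita theorem'' from \cref{ss-Morita}, but no such uniqueness is stated there: that section endows $\End_{\Ctra \Rfr}(\Pfr)$ with an induced topology but does not assert that this topology is determined by the abstract abelian category and the object $\Pfr$ alone. The paper works on the discrete side instead. Writing $\Rfr$ for $\End_{\D(\Mod R)}(T)$ with the compact topology, it applies the already-established implication $(ii) \Rightarrow (i)$ to $\Rfr$ to obtain a second tensor-compatible square, composes it with the given one, and restricts to the hearts to find that the resulting equivalence $\Disc \Rfr \toeq \Disc \Sfr$ commutes with the forgetful functors to $\Mod \Zbb$ (since $\Rfr \ctrtensor_\Rfr N \cong N$ and $\Sfr \ctrtensor_\Sfr N' \cong N'$). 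Then \cite[Proposition~4.2]{PS19b} delivers $\Rfr \cong \Sfr$ as topological rings.
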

\begin{proof}
    $(ii) \implies (i):$ Let $X$ be a non-empty set such that $T' = T^{(X)}$ is a good silting complex, which exists by \cref{good}, and let $\Sfr' = \End_{\D(\Mod R)}(T')$ be endowed with the compact topology. \cref{tilting-eq} and \cref{cotilting-eq} yield a pair of derived equivalences $\phi$ and $\psi$ which fit into the required commutative square for $\Sfr'$ by \cref{thm-tensor}. By the discussion above, we can put $\alpha= \mu^{-1} \circ \phi$ and $\beta = \nu^{-1} \circ \psi$, where $\mu = \Hom_{\Ctra \Sfr}(\Sfr^{(X)},-): \Ctra \Sfr \cong \Ctra {\Sfr'}$ and $\nu = \Sfr^{(X)} \ctrtensor_\Sfr - : \Disc \Sfr \cong \Disc {\Sfr'}$ are the topological Morita theory equivalences associated to the projective generator $\Sfr^{(X)}$ of $\Ctra \Sfr$, trivially extended to the derived category level.

    $(i) \implies (ii):$ Recall that $\Sfr$ is a projective generator of $\Ctra \Sfr$ and let $W$ be any injective cogenerator of $\Disc \Sfr$. There is the standard {{\tst}}structure $(\D^{\leq 0},\D^{>0})$ in $\D^\bdd(\Ctra \Sfr)$ which coincides with $(\Sfr\Perp{>0},\Sfr\Perp{\leq 0})$, similarly the (shifted) standard {{\tst}}structure $(\D^{< 0},\D^{\geq 0})$ in $\D^\bdd(\Disc \Sfr)$ coincides with $(\Perp{\leq 0}W,\Perp{>0}W)$. The projectivity of $\Sfr$ (resp. injectivity of $W$) implies that taking coproduct of copies of $\Sfr$ (resp. product of copies of $W$) is the same whether computed on the derived or non-derived level in the respective categories. 
    
    Denote $T = \alpha^{-1}(\Sfr) \in \D^\bdd(\Mod R)$ and $C = \beta^{-1}(W) \in \D^\bdd(\lMod R)$. The equivalences transfer the standard {{\tst}}structures to {{\tst}}structures of the form $\Tbb_T=(T\Perp{>0},T\Perp{\leq 0})$ and $\Tbb_C=(\Perp{\leq 0}C,\Perp{>0}C)$ in $\D^\bdd(\Mod R)$ and $\D^\bdd(\lMod R)$, respectively. 
    The last sentence of the previous paragraph also ensures that $\Add(T) \subseteq \Hcal_T = T\Perp{\neq 0}$ and $\Prod(C) \subseteq \Hcal_C = \Perp{\neq 0}C$, and $\Hcal_T$ and $\Hcal_C$ are the hearts of the two {{\tst}}structures. In other words, $T$ is a tilting object in the triangulated category $\D^\bdd(\Mod R)$ and $C$ is a cotilting object in $\D^\bdd(\lMod R)$. Next, we need to argue that $T$ is a tilting complex in $\D(\Mod R)$ and $C$ a cotilting complex in $\D(\lMod R)$.
    
    The equivalence $\alpha: \D^\bdd(\Mod R) \toeq \D^\bdd(\Ctra \Sfr)$ restricts to $\K^\bdd(\Modpr R) \toeq \K^\bdd(\Ctrapr{ \Sfr})$. Indeed, as in the proof of \cite[Theorem 5.3]{PV18} which follows the argument of \cite[Proposition 6.2]{Ri89}, if $\Acal$ is a cocomplete abelian category with enough projectives then we can characterize $\K^\bdd(\Acal_\Proj)$ inside $\D^\bdd(\Acal)$ internally as a full subcategory consisting of those objects $X$ such that for any $Y$ we have $\Hom_{\D^\bdd(\Acal)}(X,Y[i])$ for $i \gg 0 $. It follows in the same fashion as in \cite[Theorem 5.3]{PV18} that $T$ is isomorphic in $\D^\bdd(\Mod R)$ to a bounded complex of projective $R$-modules and furthermore, that $\thick(\Add(T)) = \K^\bdd(\Modpr R)$. Then by \cite[Proposition 5.3]{AHH19}, together with what we found in the preceding paragraph, $T$ is a tilting complex in $\D(\Mod R)$. An analogous argument using \cite[Proposition 6.8]{AHH19} shows that $C$ is a cotilting complex in $\D(\lMod R)$.

    By an adjunction argument like in the proof of \cref{heartduality}, we have $\Hom_{\D(\lMod R)}(X,T^+[i]) = 0$ if and only if $H^{-i}(T \otimes_R^\mathbf{L} X) = 0$. Via the assumed identification of $T \otimes_R^\mathbf{L} -$ and $\Sfr \ctrtensor_\Sfr^\mathbf{L} \beta(-)$, we see that the equivalence $\beta: \D^\bdd(\lMod R) \toeq \D^\bdd(\Disc \Sfr)$ identifies the (shifted) standard {{\tst}}structure in $\D^\bdd(\Disc \Sfr)$ with the cosilting {{\tst}}structure induced by $T^+$. It follows that the cosilting complexes $C$ and $T^+$ are equivalent, and so in particular, $T^+$ is cotilting and $\Hcal_C = \Hcal_{T^+}$. Then \cref{condition} implies that $T$ is decent. 
    
    We have $\Sfr = \End_{\D(\Mod R)}(T)$ as ordinary rings. Let $\Rfr = \End_{\D(\Mod R)}(T)$ be the same ring as $\Sfr$ but endowed with the compact topology. By the implication $(ii) \implies (i)$, there are derived equivalences $\gamma: \D^\bdd(\Mod R) \cong \D^\bdd(\Ctra \Rfr)$ and $\delta: \D^\bdd(\lMod R) \cong \D^\bdd(\Disc \Rfr)$ fitting into a commutative square as in $(i)$. Combining this commutative square with the one from the assumption $(i)$, we obtain the following commutative square:
    $$
    \begin{tikzcd}
        \D^\bdd(\Ctra {\Rfr}) \times \D^\bdd(\Disc {\Rfr}) \arrow{d}{\alpha\gamma^{-1} \times \beta\delta^{-1}}[swap]{\cong} \arrow{r}{- \ctrtensor^\mathbf{L}_{\Rfr} -} & \D(\Mod \Zbb) \arrow{d}{=} \\
        \D^\bdd(\Ctra \Sfr) \times \D^\bdd(\Disc \Sfr) \arrow{r}{- \ctrtensor^\mathbf{L}_{\Sfr} -} & \D(\Mod \Zbb)
    \end{tikzcd}
    $$
    Using the {{\tst}}exactness of the involved equivalences, this square further restricts to another commutative square:
    $$
    \begin{tikzcd}
        \Ctrapr {\Rfr} \times \Disc {\Rfr} \arrow{d}{\alpha\gamma^{-1} \times \beta\delta^{-1}}[swap]{\cong} \arrow{r}{- \ctrtensor_{\Rfr} -} & \Mod \Zbb \arrow{d}{=} \\
        \Ctrapr \Sfr \times \Disc \Sfr \arrow{r}{- \ctrtensor_{\Sfr} -} & \Mod \Zbb 
    \end{tikzcd}
    $$
    It follows that for any $N \in \Disc \Rfr$, we have natural isomorphisms in $\Mod \Zbb$:
    $$\Rfr \ctrtensor_\Rfr N \cong \alpha\gamma^{-1}(\Sfr) \otimes_{\Sfr} \beta\delta^{-1}(N) \cong \alpha(T) \otimes_{\Sfr} \beta\delta^{-1}(N) \cong \Sfr \otimes_{\Sfr} \beta\delta^{-1}(N),$$  
    This shows that the equivalence $\beta\delta^{-1}: \Disc \Rfr \to \Disc \Sfr$ induces a natural equivalence between the forgetful functors $\Disc {\Rfr} \to \Mod \Zbb$ and $\Disc \Sfr \to \Mod \Zbb$. Then it follows from \cite[Proposition 4.2]{PS19b} that $\Sfr$ and $\Rfr$ are isomorphic as linear topological rings.
\end{proof}
The proof method of \cref{converse} also yields the following neat observation for certain classes of rings.
\begin{prop}\label{noeth2}
    Let $R$ be a left hereditary or a commutative noetherian ring. Suppose there is a Grothendieck category $\Gcal$ and a triangle equivalence $\beta: \D^\bdd(\lMod R) \toeq \D^\bdd(\Gcal)$. Then $\Gcal$ is equivalent to $\Disc \Sfr$, where $T \in \D(\Mod R)$ is a decent tilting complex and $\Sfr = \End_{\D(\Mod R)}(T)$ is its endomorphism ring endowed with the compact topology.
\end{prop}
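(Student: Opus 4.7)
The plan is to mirror the cotilting side of the proof of \cref{converse} and then reduce to \cref{cotilting-cofinite} via the cofinite-type bijection of \cref{cofinite} and \cref{noeth}.

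First I would pick an injective cogenerator $W$ of $\Gcal$ and set $C = \beta^{-1}(W) \in \D^\bdd(\lMod R)$. Since $W$ is an injective cogenerator of $\Gcal$, the (shifted) standard {{\tst}}structure on $\D^\bdd(\Gcal)$ coincides with $(\Perp{\leq 0}W, \Perp{>0}W)$, so transferring through $\beta$ yields the {{\tst}}structure $(\Perp{\leq 0}C, \Perp{>0}C)$ on $\D^\bdd(\lMod R)$, making $\beta$ {{\tst}}exact and inducing an abelian equivalence $\Hcal_C \toeq \Gcal$ on the hearts. The injectivity of $W$ in the Grothendieck category $\Gcal$ also ensures $\Prod_{\D^\bdd(\Gcal)}(W) = \Prod_\Gcal(W) \subseteq \Gcal$, and transferring through $\beta^{-1}$ gives $\Prod(C) \subseteq \Hcal_C$. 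Hence $C$ is a cotilting object in the triangulated category $\D^\bdd(\lMod R)$.

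Next I would promote $C$ to a genuine cotilting complex in the unbounded $\D(\lMod R)$, i.e. show it is quasi-isomorphic to a bounded complex of injective left $R$-modules and is cotilting there. Dually to the argument used in the proof of \cref{converse} via \cite[Theorem 5.3]{PV18} (which in turn goes back to \cite[Proposition 6.2]{Ri89}), the homotopy category of bounded complexes of injectives can be characterized internally inside $\D^\bdd(\Acal)$ (for $\Acal = \lMod R$ or $\Acal = \Gcal$) as those objects $X$ satisfying $\Hom_{\D^\bdd(\Acal)}(Y,X[i])=0$ for all $Y$ and all $i \gg 0$. The triangle equivalence $\beta$ preserves this condition and sends $C$ to $W$, which sits in that subcategory on the $\Gcal$ side; hence $C$ is isomorphic in $\D^\bdd(\lMod R)$ to a bounded complex of injectives. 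Combined with the previous paragraph and \cite[Proposition 6.8]{AHH19}, this shows $C$ is a cotilting complex in $\D(\lMod R)$.

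Since $R$ is left hereditary or commutative noetherian, \cref{noeth} guarantees that $C$ is automatically of cofinite type. Applying \cref{cotilting-cofinite} then supplies a good and decent tilting complex $T \in \D(\Mod R)$ with $C$ equivalent to $T^+$, together with an equivalence $\Hcal_C \cong \Disc \Sfr$ where $\Sfr = \End_{\D(\Mod R)}(T)$ is endowed with the compact topology. Composing with the abelian equivalence $\Gcal \toeq \Hcal_C$ from the first paragraph produces $\Gcal \cong \Disc \Sfr$, as required. The main obstacle is the unbounded upgrade of the second paragraph: pushing the cotilting-object property from $\D^\bdd(\lMod R)$ up to $\D(\lMod R)$ and recognizing through $\beta$ that $C$ is represented by a bounded complex of injectives. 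Everything else is bookkeeping that reduces directly to the classification results already established in the paper.
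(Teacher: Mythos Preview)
Your proposal is correct and follows essentially the same approach as the paper: choose an injective cogenerator $W$ of $\Gcal$, transfer the standard {{\tst}}structure through $\beta$ to obtain a cotilting complex $C$ in $\D(\lMod R)$ with $\Hcal_C \cong \Gcal$, then invoke cofinite type (via \cref{noeth}) and finish with \cref{cotilting-cofinite}. The paper's proof is simply more terse, compressing your first two paragraphs into the phrase ``As in the proof of \cref{converse}'' rather than re-spelling out the transfer of the {{\tst}}structure and the internal characterization of bounded complexes of injectives.
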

\begin{proof}
    As in the proof of \cref{converse}, an injective cogenerator $W \in \Gcal$ corresponds under the equivalence $\beta$ to an object $C \in \D^\bdd(\lMod R)$ which is a cotilting complex in $\D(\lMod R)$ and the equivalence $\beta$ restricts to an equivalence $\Hcal_C \cong \Gcal$. By the assumption on $R$, the cotilting complex $C$ is of cofinite type as in \cref{noeth}. The rest follows from (the proof of) \cref{cotilting-cofinite}.
\end{proof}
\begin{rmk}
    In \cref{noeth} and \cref{noeth2}, it is enough to assume that any cotilting complex is of cofinite type. This in turn holds for any ring $R$ such that $\D(\lMod R)$ satisfies certain formulation of the telescope conjecture for t-structures, see \cite[Appendix A]{HN21}. For other examples of classes of rings satisfying this assumption different than left hereditary or commutative noetherian ones, see e.g. \cite[Corollary 3.12, Theorem 8.8]{BH21}.
\end{rmk}
\section{Example: One-dimensional commutative noetherian rings}
In this section, $R$ is a commutative noetherian ring of Krull dimension equal to one. The goal is to work out explicitly how our results apply to tilting and cotilting complexes induced by codimension functions on $\Spec R$, see \cite{HNS} as the basic reference here. In particular, we describe the associated hearts as certain arrow categories.

Let $\mathsf{d}: \Spec R \to \Zbb$ be a codimension function on the Zariski spectrum (here, we can always choose $\mathsf{d}$ to be the height function which assigns the height $\height(\pp)$ to a prime $\pp$). Following \cite[Theorem 4.6]{HNS}, there is a silting complex of the form $T_{\mathsf{d}} = \bigoplus_{\pp \in \Spec R}\RGamma_\pp R_\pp[\mathsf{d}(\pp)]$, where $\RGamma_\pp R_\pp$ is the local cohomology of the local ring $R_\pp$ considered as an object in $\D(\Mod R)$. It follows from \cite[Theorem 6.13]{HNS} and \cref{condition} that $T_{\mathsf{d}}$ is a decent tilting complex. Since the tilting heart and the endomorphism ring of $T_{\mathsf{d}}$ does not depend on the choice of the codimension function $\mathsf{d}$, see \cite[Remark 4.10]{HNS}, we assume $\mathsf{d} = \height$ and denote simply $T = T_{\mathsf{d}}$ from now on. Let $W_i$ be the subset of $\Spec R$ consisting of prime ideals of height $i$ for $i=0,1$. Put $Q = \prod_{\pp \in W_0}R_\pp$ and $\widehat{R} = \prod_{\mm \in W_1}\widehat{R_\mm}$, where $R_\pp$ is the localization at $\pp$ and $\widehat{R_\mm}$ is the $\mm$-adic completion of the local ring $R_\mm$. Note that $Q = R[S^{-1}]$ where $S = R \setminus \bigcup W_0$. In particular, we have the canonical localization flat map $R \to Q$ and the faithfully flat map $R \to \widehat{R}$. We remark that if $R$ is Cohen-Macaulay then $R \to Q$ is precisely the natural map to the total ring of quotients of $R$. In general however, $S$ may contain zero-divisors and so this map can fail to be injective. In any case, we can write the tilting complex as $T = Q \oplus K$, where $K = \Cone(R \to Q)$, see \cite[end of \S 4]{HNS}. From this description, one can also see that $T$ is good. Indeed, there is a triangle of the form $R \to Q \to K \to R[1]$, and so $R \in \thick(T)$.

The endomorphism ring $\Sfr = \End_{\D(\Mod R)}(T)$ can be written explicitly, this is discussed already in \cite[Example 8.4]{PS21} in the case when $Q$ is the total ring of quotients; the non-Cohen-Macaulay situation is covered in \cite[Example 6.8]{HNS}. The endomorphism ring has the following lower triangular matrix presentation:
$$\Sfr = \begin{pmatrix} Q & 0 \\ Q \otimes_R \widehat{R} & \widehat{R}\end{pmatrix}.$$
The compact topology on $\Sfr$ induces the structure of complete and separated topological rings on the corner rings $Q$ and $\widehat{R}$. The ring $Q$ is artinian and as such its topology is the discrete one. The ring $\widehat{R}$ is identified with $\End_{\D(\Mod R)}(K)$, and the primary decomposition $K = \bigoplus_{\mm \in W_1}K_\mm$ implies that the topology on $\widehat{R}$ is the product of topologies on each $\widehat{R_\mm} \cong \End_{\D(\Mod R)}(K_\mm)$. Since $K(\mm)$ represents the local cohomology object $\mathbf{R}\Gamma_\mm R_\mm$, it has a standard expression as a direct limit of the (dual) Koszul complexes supported on $\{\mm\}$. Since Koszul complexes are compact objects, this expression can be used to check that the topology on each $\widehat{R_\mm}$ is none else then the $\mm$-adic one. Note that the ideals of the form $s\widehat{R}$ with $s \in S$ form a base of open ideals for the topology of $\widehat{R}$.

The rest of the topological information is given by a system of open $\widehat{R}$-submodules of the bottom left corner $Q \otimes_R \widehat{R}$; this module identifies with $\Hom_{\D(\Mod R)}(Q,K)$. One can completely describe this topology similarly as in \cite[Example 8.4]{PS21}, but for our purposes it suffices to observe that for any open ideal $J$ of $\widehat{R}$ the left ideal of the form $\begin{pmatrix} 0 & 0 \\ 1 \otimes_R J & \widehat{R}\end{pmatrix}$ is open in $\Sfr$. Indeed, it is enough to check this for $J = s\widehat{R}$ for some $s \in S$, and this left ideal is precisely the annihilator of the map $R \xrightarrow{1 \mapsto s^{-1}} Q \subseteq_{\oplus} Q \oplus K$ from the compact object $R$.

By a standard argument, the category $\lMod \Sfr$ of left $\Sfr$-modules over the lower triangular matrix ring $\Sfr$ can be identified with certain arrow category: The objects are $R$-module maps $V \xrightarrow{\varphi} M$ where $V \in \Mod Q$ and $M \in \Mod \widehat{R}$ and the morphisms are commutative squares
\[
\begin{tikzcd}
V \arrow{r}{\varphi} \arrow{d}{\nu} & M \arrow{d}{\gamma} \\
V' \arrow{r}{\varphi'}& M' 
\end{tikzcd}\]
where $\eta$ is a morphism of $Q$-modules and $\gamma$ a morphism of $\widehat{R}$-modules. The action of an element $q \otimes_R c \in Q \otimes_R \widehat{R} \subseteq \Sfr$ on an object as above takes an element $v \in V$ to $c \varphi(qv) \in M$. The category $\Mod \Sfr$ of right $\Sfr$-modules has an analogous description with arrows $M \xrightarrow{\varphi} V$ going in the opposite direction and the right $\Sfr$-action of an element $q \otimes_R c \in Q \otimes_R \widehat{R}$ is defined using the rule $m(q \otimes_R c) = \varphi(mc)q$ for $m \in M$.

Then the tilting heart $\Hcal_T$ can be described explicitly as follows. By \cref{ab-eq} and \cref{ff-ctra}, we know that $\Hcal_T \cong \Ctra \Sfr$ and $\Ctra \Sfr$ is a full subcategory of $\Mod \Sfr$. Then $\Hcal_T$ identifies with a full subcategory of the above described category of arrows $M \xrightarrow{\varphi} V$ with $M \in \Mod \widehat{R}$ and $V \in \Mod Q$. Furthermore, it is clear from the contraaction of $\Sfr$ that the action of $\widehat{R}$ on $M$ extends to the unique contraaction; note that $\Ctra \widehat{R} \subseteq \Mod \widehat{R}$ is a full subcategory \cite[Corollary 13.13]{Pos16}. On the other hand, for any arrow $\Mfr \to V$ as above with $\Mfr \in \Ctra \widehat{R}$ the right $\Sfr$-action on $\Mfr \oplus V$ extends to a (again, unique) right $\Sfr$-contraaction. The contraaction of the two corner rings of $\Sfr$ is clear: $\widehat{R}$ acts on the $\widehat{R}$-contramodule $\Mfr$ and $Q$ acts as an ordinary ring on $V$. It remains to see how the contraaction is defined given a sequence $(q_\alpha \otimes_R c_\alpha)_{\alpha \in A}$ of elements of $Q \otimes_R \widehat{R}$ which converges to zero in the topology. By the description of the topology above, all but finitely many $q_\alpha$'s can be assumed to be $1$'s. Then for any collection $m_\alpha \in \Mfr, \alpha \in A$, the contramodule action is computed as follows: $\sum_{\alpha \in A}(m_\alpha)(q_\alpha \otimes_R c_\alpha) = \sum_{\alpha \in F}(m_\alpha)(q_\alpha \otimes_R c_\alpha) + \sum_{\alpha \in A \setminus F}(m_\alpha)(1 \otimes_R c_\alpha) = \sum_{\alpha \in F}\varphi(m_\alpha c_\alpha)q_\alpha + \varphi(\sum_{\alpha \in A \setminus F} m_\alpha c_\alpha)$, where $F$ is a finite subset of $A$ such that $q_\alpha = 1$ whenever $\alpha \in A \setminus F$. By the existence of the left open ideals described above, the sequence $(c_\alpha)_{\alpha \in A}$ also converges to zero in the topology of $\widehat{R}$, which ensures that $\sum_{\alpha \in A \setminus F} m_\alpha c_\alpha$ is well-defined using the $\widehat{R}$-contramodule structure of $\Mfr$. One can check directly that all this defines an assignment $\Sfr[[\Mfr \oplus V]] \to \Mfr \oplus V$ satisfying the conditions of contra-associativity and contra-unitality, and thus we obtain the contraaction. Since $\Ctra \Sfr$ is a full subcategory of $\Mod \Sfr$, this shows that $\Hcal_T$ is equivalent to the full subcategory of $\Mod \Sfr$ consisting of those arrows $\Mfr \xrightarrow{\varphi} V$ such that $\Mfr \in \Ctra \Sfr$. We remark that $\Mfr$ decomposes into a product $\prod_{\mm \in W_1}\Mfr(\mm)$ where $\Mfr(\mm) \in \Ctra \widehat{R_\mm}$; for a module-theoretic description of this category see \cite[Corollary 13.13]{Pos16}.

We are left with the task of describing the cotilting heart induced by $C = T^+$. Using \cref{cotilting-eq}, $\Hcal_C$ is equivalent to $\Disc \Sfr$. Here, we claim that $\Hcal_C$ is identified with the full subcategory consisting of arrows of the form $V \xrightarrow{\varphi} M$ where $M$ is in $\Disc{\widehat{R}}$. Indeed, if an arrow $V \xrightarrow{\varphi} M$ belongs $\Disc \Sfr$ then necessarily $M \in \Disc{\widehat{R}}$. On the other hand, consider an element $v \in V$. Then the image $\varphi(v) \in M \in \Disc {\widehat{R}}$ is annihilated by some open ideal $J$ of $\widehat{R}$, and therefore the annihilator of $v$ in $\Sfr$ contains the left open ideal $\begin{pmatrix} 0 & 0 \\ 1 \otimes_R J & \widehat{R}\end{pmatrix}$. It is well-known that $\Disc{\widehat{R}}$ is naturally identified with the full subcategory of $\lMod R$ consisting of modules which are $W_1$-torsion, meaning that they are supported on the set $W_1 \subseteq \Spec R$\footnote{In particular, the connecting $\widehat{R}$-module map $\gamma$ between arrows in $\lMod \Sfr$ representing objects from $\Hcal_C$ is equivalently just an $R$-module map here.}. Yet another description is that these are precisely the modules $M$ such that $M \otimes_R Q = 0$. 

Finally, we remark that the above arrow description of $\Hcal_C$ fits perfectly with the data of the \newterm{Zariski torsion model} constructed in \cite[Remark 8.8, see also Example 8.9, \S 9.3]{torsionmodel} by Balchin, Greenlees, Pol, and Williamson, while the description of $\Hcal_T$ is --- at least seemingly --- different from the \newterm{complete model} of Balchin and Greenlees in \cite{Bsep}, see \cite[\S 10]{Bsep} in particular.
\section*{Acknowledgement}
The author would like to to thank Francesco Genovese, Leonid Positselski, and Jordan Williamson for helpful discussions. A special thanks is due to Leonid Positselski for giving a very useful feedback on an earlier version of the manuscript and to the anonymous referee for carefully reading the manuscript and giving suggestions leading to improvements of the paper.
\bibliographystyle{amsalpha}
\bibliography{bibitems}
\end{document}